\DeclareMathOperator{\Z}{\mathbb{Z}}
\DeclareMathOperator{\R}{\mathbb{R}}
\DeclareMathOperator{\C}{\mathbb{C}}
\DeclareMathOperator{\F}{\mathbb{F}}
\DeclareMathOperator{\Mat}{\mathrm{Mat}}
\DeclareMathOperator{\diag}{\mathrm{diag}}
\DeclareMathOperator{\BH}{\mathrm{BH}}
\DeclareMathOperator{\GW}{\mathrm{GW}}
\DeclareMathOperator{\re}{\mathrm{Re}}
\newcommand{\genlegendre}[4]{%
  \genfrac{(}{)}{}{#1}{#3}{#4}%
  \if\relax\detokenize{#2}\relax\else_{\!#2}\fi
}
\newcommand{\legendre}[3][]{\genlegendre{}{#1}{#2}{#3}}
\newtheorem{theorem}{Theorem}
\newtheorem{proposition}{Proposition}
\newtheorem{lemma}{Lemma}
\newtheorem{corollary}{Corollary}
\theoremstyle{definition}
\newtheorem{example}{Example}
\theoremstyle{definition}
\newtheorem{definition}{Definition}
\theoremstyle{definition}
\newtheorem{remark}{Remark}
\newcommand{\red}[1]{\textcolor{red}{#1}}
\title{Maximal determinants of matrices over the roots of unity}
\author[1, 2, *]{Guillermo Nuñez Ponasso}
\affil[1]{\small Graduate School of Information Sciences: Division of Mathematics, Tohoku University, Sendai, Miyagi, Japan}
\affil[2]{\small Dept. of Electrical \& Computer Engineering, Worcester Polytechnic Institute, Worcester, MA, USA}
\affil[*]{correspondence: \href{mailto:guillermo.carlo.nunez.a7@tohoku.ac.jp}{guillermo.carlo.nunez.a7@tohoku.ac.jp}\\\href{mailto:gcnunez@wpi.edu}{gcnunez@wpi.edu}}
\begin{document}

%% === Title == %%
\maketitle

\abstract{We study the maximum absolute value of the determinant of matrices with entries in the set of $\ell$-th roots of unity~\textemdash~this is a generalization of $D$-optimal designs and Hadamard's maximal determinant problem, which involves $\pm 1$ matrices. For general values of $\ell$, we give sharpened determinantal upper bounds and constructions of matrices of large determinant. The maximal determinant problem in the cases $\ell=3$, $\ell=4$ is similar to the classical Hadamard maximal determinant problem for matrices with entries $\pm 1$, and many techniques can be generalized. For $\ell=3$ we give and additional construction of matrices with large determinant, and calculate the value of the maximal determinant over $\mu_3$ for all orders $n<14$. Additionally, we survey the case $\ell=4$ and exhibit an infinite family of maximal determinant matrices over the fourth roots of unity.}
%%%%%%%%%%%%%%%%%%%%%%%%%%%%%%
  %% === INTRODUCTION === %%
%%%%%%%%%%%%%%%%%%%%%%%%%%%%%%

\section{Introduction}
Maximal determinant $\pm 1$ matrices, or $D$-optimal designs,  are interesting objects in statistics since they can be used to construct designs of experiments which have many nice statistical properties \cite{Box-Draper-71, Aguiar-DOptimalTutorial, Mitchell-DOptimal, Smith-DCriterion}. A particularly important family of maximal determinant $\pm 1$ matrices are \textit{Hadamard matrices} \cite{Horadam-HadamardBook}, which have additional applications to coding theory \cite{Assmus-Key, Seberry-Applications}, philogenetics \cite{Gonzalez-Advances, Hendy-Phyl, Hendy-Trees, Penttila-Conjugation}, and compressed sensing \cite{OCathain-Compressed, Vaz-Compressed}, among others.\\
Hadamard matrices are $\pm 1$ matrices $H$ of order $n$ satisfying $HH^{\intercal}=nI_n$, where $I_n$ is the identity matrix of size $n$. A natural generalization is that of \textit{complex Hadamard matrices}: matrices $H$ of order $n$ with entries taken from the complex unit circle satisfying $HH^*=nI_n$. These matrices have received attention \cite{Szollosi-Thesis, Tadej2006} due to their applications in the field of quantum information theory and quantum computation \cite{Bengtsson-GQS, Werner-Teleportation,Zauner}. An important class of complex Hadamard matrices is the class of \textit{Butson-type Hadamard matrices}: these are complex Hadamard matrices whose entries are taken from the set of $\ell$-th roots of unity. The set of Butson-type matrices of order $n$ over the $\ell$-th roots is typically denoted by $\BH(n,\ell)$. These matrices have interesting connections to real Hadamard matrices \cite{CCDL, OCathain-Morphisms, Turyn}, and generalized Hadamard matrices \cite{deLauney-SurveyGHM}.\\
Historically, the study of complex Hadamard matrices was initiated by J.J. Sylvester prior to the study of real Hadamard matrices \cite{Sylvester-InverseOrthogonal}~\textemdash in fact, he considered an even more general object now known as \textit{type-II} matrix \cite{Chan-TypeII}. Meanwhile, the study of real Hadamard matrices began alongside the study of maximal determinant $\pm 1$ matrices in Hadamard's seminal paper \cite{Hadamard-Determinants}. Hadamard showed the following:
\begin{theorem}[Hadamard's determinantal bound \cite{Hadamard-Determinants}]\label{thm-Hadamard} If $M$ is a matrix of order $n$ with entries belonging to the complex unit disk $D=\{z\in\C:|z|\leq 1\}$, then
  \begin{equation}
    |\det M |\leq n^{n/2}.
  \end{equation}
  Furthermore, equality holds above if and only if $MM^*=nI_n$.
\end{theorem}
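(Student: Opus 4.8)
The plan is to pass from the determinant to the Gram matrix $MM^*$, apply the arithmetic--geometric mean inequality to its eigenvalues, and control the trace using the entrywise constraint $|m_{ij}|\le 1$. First I would note that $\det(M^*)=\overline{\det M}$, so that $|\det M|^2=\det M\cdot\det M^*=\det(MM^*)$; it therefore suffices to bound $\det(MM^*)$. The matrix $G=MM^*$ is Hermitian and positive semidefinite, hence has real nonnegative eigenvalues $\lambda_1,\dots,\lambda_n$ (if $M$ is singular then $\det M=0$ and the bound is trivial, with no equality, so this case needs no further attention).

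Next I would apply AM--GM to these eigenvalues: $\det G=\prod_{i=1}^{n}\lambda_i\le\left(\tfrac1n\sum_{i=1}^{n}\lambda_i\right)^{n}=\left(\tfrac{1}{n}\tr G\right)^{n}$. The trace is then computed directly from the entries of $M$: $\tr(MM^*)=\sum_{i=1}^{n}\sum_{j=1}^{n}|m_{ij}|^2\le n^2$, using $|m_{ij}|\le 1$ for every $i,j$. Combining the two estimates gives $|\det M|^2=\det(MM^*)\le(n^2/n)^n=n^n$, i.e. $|\det M|\le n^{n/2}$.

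For the ``furthermore'' part I would trace equality back through both inequalities. If $|\det M|=n^{n/2}$ then $\prod_i\lambda_i=n^n$; since $\sum_i\lambda_i\le n^2$, the AM--GM inequality can be tight only when all $\lambda_i$ are equal, say to $\lambda$, and simultaneously $\sum_{i,j}|m_{ij}|^2=n^2$. From $n\lambda=\tr G=n^2$ we get $\lambda=n$, so $G$ is a Hermitian matrix all of whose eigenvalues equal $n$, which forces $MM^*=nI_n$. Conversely, if $MM^*=nI_n$ then $|\det M|^2=\det(nI_n)=n^n$. There is no real obstacle here; the only points requiring a little care are organizing the equality discussion and observing that, under the hypothesis $|m_{ij}|\le 1$, the identity $MM^*=nI_n$ in fact forces every $|m_{ij}|=1$, which is consistent with the statement and with the definition of complex Hadamard matrices.
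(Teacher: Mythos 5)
Your proof is correct. Note that the paper states Theorem \ref{thm-Hadamard} without proof, citing Hadamard's original article, so there is no in-paper argument to match; but the route implicitly available from the paper's toolkit is the Muir--Kelvin inequality (Theorem \ref{thm:Muir-Kelvin}): applying $\det G\leq\prod_i g_{ii}$ to $G=MM^*$ gives $|\det M|^2\leq\prod_i\sum_j|m_{ij}|^2\leq n^n$, with equality forcing $G$ diagonal and every $|m_{ij}|=1$ simultaneously, hence $MM^*=nI_n$. Your AM--GM argument on the eigenvalues of $MM^*$ is a genuinely different and equally valid derivation: it trades the Hadamard--Fischer-type inequality for the spectral theorem plus AM--GM, and the equality analysis becomes a statement about the eigenvalues all being equal to $n$ rather than about off-diagonal Gram entries vanishing. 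One small stylistic remark: in the equality discussion you assert that AM--GM ``can be tight only when all $\lambda_i$ are equal, and simultaneously $\sum_{i,j}|m_{ij}|^2=n^2$''; it is cleaner to observe that the chain $n^n=\prod_i\lambda_i\leq\bigl(\tfrac1n\tr G\bigr)^n\leq n^n$ collapses, forcing both equalities at once, which is what you in effect do. The final step, that a Hermitian matrix whose eigenvalues all equal $n$ must be $nI_n$, correctly uses unitary diagonalizability and should be stated as such. Everything checks out.
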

Complex Hadamard matrices exists at all possible orders $n$ since the Fourier matrix $F_n$~\textemdash~the character table of the cyclic group of order $n$~\textemdash~ is a complex Hadamard matrix, and in particular a $\BH(n,n)$; more generally, the character table of an abelian group of order $n$ and exponent $\ell$ give examples of $\BH(n,\ell)$ matrices. However, restricting the set of entries we may yield non-existence: for example, in the $\pm 1$ case, Hadamard matrices can only exists at orders $n=1,2$ or $n$ a multiple of $4$. The existence problem then becomes very challenging: it has been conjectured almost a hundred years ago \cite{Paley} that real Hadamard matrices exist for all orders $n=4m$, yet there is still no proof or refutation for this claim.\footnote{Currently, the smallest open case for the existence of a real Hadamard matrix is $n=668$.}\\

Throughout the paper, we will denote the set of $\ell$-th roots of unity as
\begin{equation}
  \mu_{\ell} = \{1,\zeta_{\ell},\zeta_{\ell}^{2},\dots,\zeta_{\ell}^{\ell-1}\},
\end{equation}
where $\zeta_{\ell}$ is a primitive $\ell$-th root of unity. In the particular cases $\ell=3$ and $4$, we denote the corresponding primitive roots as $\omega:=\zeta_3$, and $i:=\zeta_4$. From a matrix $A$ with entries in the set $\{0,1,\dots,\ell-1\}$ we can obtain a unique matrix $M$ over the $\mu_{\ell}$ by letting $M_{ij}=\zeta_{\ell}^{A_{ij}}$, and conversely from a matrix $M$ over $\mu_{\ell}$ we can obtain a unique matrix $A$ with entries in $\{0,1,\dots,n-1\}$. We call the matrix $A$ the \textit{logarithmic form} of $M$.

\begin{definition}\label{def-MonomialEquivalence}
Two matrices $M$ and $N$ with entries in $\mu_{\ell}$ are \textit{equivalent}, or \textit{monomially equivalent}, if and only if there exist a pair of permutation matrices $(P,Q)$ and a pair of diagonal matrices $(\Delta_1,\Delta_2)$ with diagonal entries in $\mu_{\ell}$ such that:
\begin{equation}
  N = (\Delta_1 P)M(\Delta_2 Q)^*.
\end{equation}
\end{definition}

Hadamard's maximal determinant problem is simple to state: If $n$ is an integer, find the maximum value of the determinant for a matrix of order $n$ with entries in the set $\mu_2=\{+1,-1\}$. While Hadamard's maximal determinant problem has received much attention (see \cite{OCathain-SurveyMaxDet} for a survey of results), generalizations of this problem to larger sets of entries are still largely unexplored. It appears that the only generalization that was previously considered is due to J.H.E. Cohn \cite{Cohn-ComplexDOptimal} where he studied complex $D$-optimal designs; i.e., maximal determinant matrices over the set of fourth roots of unity $\mu_4=\{1,i,-1,-i\}$.\\

In this paper, we extend Hadamard's maximal determinant problem to matrices over the roots of unity: Letting $\mathcal{M}_{\ell}(n)=\{M\in \Mat_{n}(\C):M_{ij}\in\mu_{\ell},\text{ for } 1\leq i,j\leq n\}$, determine the value of
\begin{equation}
  \gamma_{\ell}(n)=\max\{|\det M|:M\in\mathcal{M}_{\ell}(n)\}.
\end{equation}
This problem depends heavily on the behavior of the sums of $\ell$-th roots of unity, particularly on their minimal absolute value. In the cases $\ell=3,4,$ and $6$, the ring $\Z[\zeta_{\ell}]$ (where $\zeta_{\ell}$ is a primitive $\ell$-th root of unity) has no accumulation points in the induced Euclidean topology and this simplifies the analysis of the maximal determinant problem.\\
In the case $\ell=4$ there is much we can say from existing results, the reason for this is that $\mu_2\subset\mu_4$, so many examples of maximal determinant matrices over $\mu_2$ are also maximal determinant over $\mu_4$. Additionally, the Turyn morphism \cite{OCathain-Morphisms, Turyn} provides a connection between matrices over $\mu_4$ and matrices over $\mu_2$ of twice the order, which can then be used to ``lift'' maximal determinant matrices from $\mu_2$ to $\mu_4$.\\
The case $m=3$ is more challenging, since $\mu_2\not\subset \mu_3$ and there is no analogue of the Turyn morphism. In addition, this case was previously unexplored so we put more emphasis on it. In the case $m=6$, there is evidence to believe that $\BH(n,6)$ matrices exist at all orders $n$ which are not ruled out by the non-existence conditions of \cite{Winterhof-NonexistenceButson}: namely, that if $n$ has an odd prime factor $p$ with odd multiplicity satisfying $p\equiv 5\pmod{6}$, then there is no $\BH(n,6)$. In the limit, this condition rules out a fraction of $\approx 0.3671$ of all $n$, so the set of unfeasible orders is relatively sparse. Also, many techniques we present rely on the assumption that the sum of $n$ roots of unity in $\mu_{\ell}$ have absolute value $\geq 1$. Cancellation of sums of sixth roots happens at all orders $n>1$, so it seems that a different approach is required

\begin{itemize}

\item[\textendash]In Section \ref{sec-ULBounds} we give upper and lower bounds for the absolute value of the determinant of a matrix over $\mu_{\ell}$, where $\ell$ is arbitrary. In particular, we show a generalization of Barba's bound \cite{Barba-DetBound}, and  Cohn's bound \cite{Cohn-ComplexDOptimal}. We show that Barba matrices, i.e. those matrices satisfying the Barba bound with equality, are equivalent to normal Barba matrices with constant row sum.  We give a construction for large determinant matrices over $\ell$ at orders $n\equiv 1\pmod{\ell}$ which achieves at least 60\% of Hadamard's bound and 70\% of our generalized Barba bound.
\item[\textendash] In Section \ref{sec-3Roots}, we focus on the study of the case $\ell=3$. We give an additional construction of large-determinant matrices at orders $n\equiv 2\pmod{3}$ using generalized Paley cores and the theory of cyclotomy \cite{Storer-CyclotomyBook}. Similar to the maximal determinant problem over $\mu_2$, the problem over $\mu_3$ splits into congruence classes modulo $3$. Therefore, in combination with the general construction at orders $n\equiv 1\pmod{3}$, we have large lower bounds for the determinant at infinitely many orders in each congruence class modulo $3$.
\item[\textendash] In Section \ref{sec-3RootsSmall} continue our study of the case $\ell=3$ and consider maximal determinant matrices over $\mu_3$ of small size. We classify Barba matrices with two different entries, and Barba matrices supported by a strongly regular graph. We develop computational strategies to certify the maximality of candidate matrices with large determinant, extending the methods of \cite{MK-21, Moyssiadis-Kounias, Orrick-15}. With this, we compute the value of the maximal determinant over $\mu_3$ for all orders $n<14$.
\item[\textendash] In Section \ref{sec-4Roots} we survey the case $\ell=4$, giving a new construction of an infinite family of Barba matrices by lifting a construction of maximal determinant matrices in $\mu_2$ at orders $n\equiv 2\pmod{4}$. Additionally, we report a matrix over $\mu_4$ of order $11$ with larger determinant than the previous record in \cite{Cohn-ComplexDOptimal}.
\end{itemize}

%%%%%%%%%%%%%%%%%%%%%%%%%%%%%%%%%%%%%%%%%%%%%%%%
  %%% === SECTION 2 - GENERAL BOUNDS === %%%
%%%%%%%%%%%%%%%%%%%%%%%%%%%%%%%%%%%%%%%%%%%%%%%%
  \section{General upper and lower bounds} \label{sec-ULBounds}
    For matrices with entries in $\mu_2=\{\pm 1\}$, Guido Barba established the following strengthening of Hadamard's bound:

    \begin{theorem}[Barba, 1931 \cite{Barba-DetBound}]\label{thm-BarbaBound}
        Let $M$ be a $n\times n$ matrix with entries $\pm 1$, with $n$ odd. Then
        \begin{equation}
            |\det M|\leq \sqrt{2n-1}(n-1)^{(n-1)/2}.
        \end{equation}
        Furthermore, $M$ meets the bound with equality if and only if $M$ is monomially equivalent to a matrix $B$ satisfying
        \begin{equation}
            BB^* = (n-1)I_n+J_n.
        \end{equation}
    \end{theorem}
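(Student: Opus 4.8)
The plan is to pass to the Gram matrix and reduce everything to a constrained optimization over its spectrum. Set $G=MM^{\intercal}$ (as $M$ is real, $M^{*}=M^{\intercal}$). Then $G$ is positive semidefinite, $(\det M)^{2}=\det G$, every diagonal entry of $G$ equals $n$, and for $i\neq j$ the entry $G_{ij}$ is a sum of $n$ terms $\pm1$, hence odd since $n$ is odd, so $|G_{ij}|\geq 1$. Writing $\lambda_{1},\dots,\lambda_{n}\geq 0$ for the eigenvalues of $G$, taking traces of $G$ and of $G^{2}$ gives $\sum_{i}\lambda_{i}=\tr G=n^{2}$ and $\sum_{i}\lambda_{i}^{2}=\tr G^{2}=\sum_{i,j}G_{ij}^{2}\geq n\cdot n^{2}+n(n-1)=n(n^{2}+n-1)$; equivalently (completing the square) $\sum_{i}(\lambda_{i}-(n-1))^{2}\geq n^{2}$. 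Thus it suffices to prove the spectral claim: if $\lambda_{1},\dots,\lambda_{n}\geq 0$ satisfy $\sum_{i}\lambda_{i}=n^{2}$ and $\sum_{i}\lambda_{i}^{2}\geq n(n^{2}+n-1)$, then $\prod_{i}\lambda_{i}\leq (2n-1)(n-1)^{n-1}$, with equality only for the multiset $\{2n-1,n-1,\dots,n-1\}$.

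I would prove this spectral claim by a compactness-plus-Lagrange argument. The feasible region $\Omega\subseteq\R^{n}$ is compact, so $\prod_{i}\lambda_{i}$ attains a maximum there; since $(2n-1,n-1,\dots,n-1)\in\Omega$ (the second constraint holds with equality at this point) has positive product, the maximum is positive and every coordinate of a maximizer is positive. A maximizer cannot satisfy $\sum_{i}\lambda_{i}^{2}>n(n^{2}+n-1)$ strictly, for then the second constraint is locally inactive and the point is a local maximum of $\prod_{i}\lambda_{i}$ on the hyperplane $\{\sum_{i}\lambda_{i}=n^{2}\}$, which by strict concavity of $\sum_{i}\log\lambda_{i}$ forces all $\lambda_{i}=n$ and contradicts $\sum_{i}\lambda_{i}^{2}=n^{3}<n(n^{2}+n-1)$. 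Hence the second constraint is active at any maximizer, and the Lagrange conditions $\prod_{j}\lambda_{j}/\lambda_{i}=\alpha+2\beta\lambda_{i}$ force each $\lambda_{i}$ to be a root of one fixed quadratic, so the maximizing spectrum takes at most two distinct values: $k$ copies of $a$ and $n-k$ copies of $b$ with $1\leq k\leq n-1$. Eliminating a variable from the two moment equations yields $a=n+\sqrt{(n-1)(n-k)/k}$ and $b=n-\sqrt{k(n-1)/(n-k)}$ (in particular $(a-n)(n-b)=n-1$), and at $k=1$ this is precisely $(2n-1,n-1,\dots,n-1)$ with product $(2n-1)(n-1)^{n-1}$.

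The one genuinely nontrivial step is then to show $a^{k}b^{\,n-k}\leq (2n-1)(n-1)^{n-1}$ for all $1\leq k\leq n-1$, with equality only at $k=1$. The substitution $x=\sqrt{(n-1)(n-k)/k}\in[1,n-1]$ turns $k$ into $\tfrac{n(n-1)}{x^{2}+n-1}$ and $n-k$ into $\tfrac{nx^{2}}{x^{2}+n-1}$, so $a^{k}b^{\,n-k}$ becomes a function of $x$ alone, namely $\exp\!\big(\tfrac{n}{x^{2}+n-1}\big[(n-1)\log(n+x)+x^{2}\log\tfrac{nx-(n-1)}{x}\big]\big)$, and the claim reduces to the assertion that this is strictly increasing on $[1,n-1]$ -- an elementary, if slightly tedious, single-variable calculus estimate. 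This monotonicity is the step I expect to require the most care; the rest is bookkeeping. Granting it, $(\det M)^{2}=\det G\leq (2n-1)(n-1)^{n-1}$, which is the stated bound.

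Finally, for the equality characterization: if $|\det M|=\sqrt{2n-1}\,(n-1)^{(n-1)/2}$ then $\det G$ equals the maximum, so by the uniqueness above the spectrum of $G$ is $\{2n-1,n-1,\dots,n-1\}$ \emph{and} $\sum_{i}\lambda_{i}^{2}=n(n^{2}+n-1)$; the latter forces $\sum_{i\neq j}G_{ij}^{2}=n(n-1)$, hence $G_{ij}=\pm1$ for all $i\neq j$. Then $G-(n-1)I_{n}$ is symmetric with all entries $\pm1$ and spectrum $\{n,0,\dots,0\}$, so it is positive semidefinite of rank one: $G-(n-1)I_{n}=vv^{\intercal}$ for some vector $v$, and the diagonal condition $v_{i}^{2}=1$ gives $v\in\{\pm1\}^{n}$. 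With $D=\diag(v)$ we get $DGD=(n-1)I_{n}+(Dv)(Dv)^{\intercal}=(n-1)I_{n}+J_{n}$, and since $DGD=(DM)(DM)^{*}$, the matrix $B:=DM$ -- monomially equivalent to $M$ in the sense of Definition \ref{def-MonomialEquivalence} (it differs from $M$ only by sign changes of rows) -- satisfies $BB^{*}=(n-1)I_{n}+J_{n}$. Conversely, if $M$ is monomially equivalent to such a $B$, then $|\det M|=|\det B|$ and $|\det B|^{2}=\det(BB^{*})=\det((n-1)I_{n}+J_{n})=(2n-1)(n-1)^{n-1}$, so the bound is attained.
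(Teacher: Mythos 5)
Your overall strategy is sound and genuinely different from the route taken in this paper: here the bound is obtained (in the generalized form of Proposition \ref{prop-GeneralBarba}) by a block decomposition of the Gram matrix and induction on $n$, with Lemma \ref{lemma-BLemma} and the Muir--Kelvin bound (Theorem \ref{thm:Muir-Kelvin}) doing all of the analytic work, and the equality case falling out structurally, entry by entry. You instead reduce to a constrained optimization over the spectrum of $G=MM^{\intercal}$ using only the two moments $\tr G=n^2$ and $\tr G^2\geq n(n^2+n-1)$. Your reductions are correct: the parity argument giving $|G_{ij}|\geq 1$, the exclusion of an inactive second constraint, the Lagrange argument forcing a two-valued spectrum, the formulas $a=n+\sqrt{(n-1)(n-k)/k}$ and $b=n-\sqrt{(n-1)k/(n-k)}$ (with $(a-n)(n-b)=n-1$), and the rank-one argument recovering $B$ with $BB^{*}=(n-1)I_n+J_n$ from the extremal spectrum together with $G_{ij}=\pm1$ all check out. (You should note that the constraint qualification holds at any candidate maximizer: the gradients $(1,\dots,1)$ and $2\lambda$ are linearly dependent only if all $\lambda_i$ coincide, which is infeasible; but this is easily supplied.)

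The genuine gap is the step you yourself flag: the claim that $a^{k}b^{\,n-k}$ is maximized at $k=1$, equivalently that your function of $x$ is strictly increasing on $[1,n-1]$. This is not bookkeeping; it is where the entire quantitative content of Barba's inequality now lives, and ``an elementary, if slightly tedious, single-variable calculus estimate'' is an assertion, not a proof. The claim is true, but the sign of the derivative of $\frac{n}{x^{2}+n-1}\bigl[(n-1)\log(n+x)+x^{2}\log\frac{nx-(n-1)}{x}\bigr]$ is not obvious, and until that computation is carried out the argument is incomplete exactly at its crux; moreover your uniqueness claim in the equality analysis relies on the \emph{strict} version of this monotonicity. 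If you want to avoid the calculus altogether, the inductive route of Proposition \ref{prop-GeneralBarba} is worth internalizing: peeling off one row and column at a time and applying the Muir--Kelvin bound to the Schur complement replaces the global spectral optimization by $n$ applications of a one-line inequality, and hands you the equality characterization (constancy of the off-diagonal Gram entries after a diagonal conjugation) directly, without first pinning down the spectrum and then running the rank-one argument.
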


    In \cite{Cohn-ComplexDOptimal}, Cohn showed that this result also holds for matrices with entries in $\mu_4$. Later we will show that Theorem \ref{thm-BarbaBound} is also true for matrices with entries in $\mu_3$. For other values of $\ell$, we show a more general form of Theorem \ref{thm-BarbaBound} which depends on the minimal modulus of sums of $\ell$-th roots ``of length $n$''.

  %%%%%%%%%%%%%%%%%%%%%%%%%%%
  % GENERALIZED BARBA BOUND %
  %%%%%%%%%%%%%%%%%%%%%%%%%%%
  \subsection{A generalization of Barba's bound}
  We begin with two auxiliary results from which Barba's determinant bound \cite{Barba-DetBound}, and Cohn's extension \cite{Cohn-ComplexDOptimal}, can be obtained. Both results appeared in Wojtas' paper \cite{Wojtas-Determinants} for symmetric positive-definite matrices. Their proofs can be adapted to the show the equivalent results for Hermitian positive-definite matrices. Recall the following result:

  \begin{theorem}[Muir-Kelvin bound; cf., Theorem 7.8.1. \cite{Horn-Johnson}] \label{thm:Muir-Kelvin} Let $G$ be an $n\times n$ positive-definite matrix. Then
    \begin{equation}
    |\det G|\leq\prod_{i=1}^n g_{ii},
  \end{equation}
  and equality holds above if and only if $G$ is diagonal.
\end{theorem}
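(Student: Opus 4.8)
The plan is to reduce the statement to the Cholesky factorisation of a positive-definite matrix. Since $G$ is positive-definite (hence Hermitian), each of its leading principal submatrices is positive-definite and so has positive determinant; this is exactly the condition guaranteeing that Gaussian elimination without pivoting succeeds, and it yields an invertible upper-triangular matrix $R$, with real positive diagonal entries $r_{11},\dots,r_{nn}$, such that $G=R^{*}R$. I would either spell out this recursive argument or simply invoke it as the standard Cholesky decomposition; everything else in the proof is a short computation with $R$.

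Given the factorisation, I would first compute the determinant: because $R$ is triangular and $\det R^{*}=\overline{\det R}$, we get $\det G=|\det R|^{2}=\prod_{i=1}^{n}r_{ii}^{2}$. Next I would read off the diagonal entries of $G=R^{*}R$, using that $R_{ki}=0$ whenever $k>i$:
\begin{equation}
  g_{ii}=(R^{*}R)_{ii}=\sum_{k=1}^{n}|R_{ki}|^{2}=\sum_{k=1}^{i}|R_{ki}|^{2}\ \geq\ |R_{ii}|^{2}=r_{ii}^{2}.
\end{equation}
Multiplying these inequalities over $i=1,\dots,n$ gives $\prod_{i}g_{ii}\geq\prod_{i}r_{ii}^{2}=\det G$, which is the asserted bound.

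For the equality characterisation, the key observation is that $g_{ii}\geq r_{ii}^{2}$ is an equality precisely when $R_{ki}=0$ for all $k<i$; so if equality holds in the product bound then, all the $g_{ii}$ and $r_{ii}$ being positive, it must hold in each factor, forcing $R$ to be diagonal and hence $G=R^{*}R$ diagonal as well. The converse is immediate, since the determinant of a diagonal matrix is the product of its diagonal entries. An alternative to the Cholesky step, which I would mention, is an induction on $n$ via the Schur complement: writing $G$ in block form with leading $(n-1)\times(n-1)$ block $G'$, one has $\det G=\det(G')\,(g_{nn}-b^{*}(G')^{-1}b)\leq g_{nn}\det(G')$, and positive-definiteness of $(G')^{-1}$ forces $b=0$ in the equality case. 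The one point that genuinely needs care — the rest being routine bookkeeping — is this equality analysis: I must check that the off-diagonal entries of $R$ are truly forced to vanish, i.e. that no strictness is lost, before concluding that $G$ itself is diagonal.
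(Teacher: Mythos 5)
Your proof is correct and complete: the Cholesky factorisation $G=R^{*}R$ gives $\det G=\prod_i r_{ii}^{2}$ and $g_{ii}=\sum_{k\leq i}|R_{ki}|^{2}\geq r_{ii}^{2}$, and your equality analysis is sound, since each factor of the product inequality is positive, so equality in the product forces $\sum_{k<i}|R_{ki}|^{2}=0$ for every $i$, making $R$ and hence $G$ diagonal. Note that the paper does not prove this statement at all — it is recalled as Theorem 7.8.1 of Horn--Johnson and used as a black box — so there is no internal proof to compare against; both your Cholesky route and your Schur-complement induction are standard self-contained arguments (Horn--Johnson's own proof normalises the diagonal and applies the AM--GM inequality to the eigenvalues, a third equivalent route), and any of them would serve if the paper wished to be self-contained.
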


    \begin{lemma} \label{lemma-BLemma} Let $B$ be a $k\times k$ Hermitian positive-definite matrix, written in block form as:
    \begin{equation}
        B = \left[\begin{array}{c|c}
         G_{k-1} & \beta\\
         \hline
         \beta^* & b
        \end{array}\right],
    \end{equation}
    where the diagonal entries of $G_{k-1}=(g_{ij})_{ij}$ are all equal to $m$. If  $0<b\leq|\beta_i|$ for all $1\leq i\leq k$, then
    \begin{equation}
    \det B \leq b(m-b)^k,
    \end{equation}
    and $B$ achieves this bound with equality of and only if $|\beta_i|=b$, and $g_{ij}=\beta_i\beta_j^*/b$ for all  $i\neq j$.
    \end{lemma}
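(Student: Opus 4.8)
The plan is to diagonalize the bordered block structure of $B$ using the Schur complement and then apply the Muir–Kelvin bound to the resulting diagonal-dominant piece. First I would note that since $B$ is positive-definite, so is $G_{k-1}$, and the Schur complement $s = b - \beta^* G_{k-1}^{-1}\beta$ is a positive real number, with the factorization
\begin{equation}
\det B = \det(G_{k-1})\cdot s = \det(G_{k-1})\,\bigl(b - \beta^* G_{k-1}^{-1}\beta\bigr).
\end{equation}
This already shows $\det B \leq b\det(G_{k-1})$, but that is too weak; the point is that the off-diagonal entries of $G_{k-1}$ are constrained once we want equality, so I need a bound on $\det(G_{k-1})$ that interacts with $b$ and $\beta$. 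The cleaner route is to instead conjugate $B$ by the unit lower-triangular matrix that clears the last row/column, or equivalently to consider the matrix $G_{k-1} - \beta\beta^*/b$ directly.

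Concretely, here is the key manoeuvre. Consider
\begin{equation}
C = G_{k-1} - \frac{1}{b}\,\beta\beta^*.
\end{equation}
One checks by a direct block computation (left- and right-multiplying $B$ by the elementary matrices $\bigl[\begin{smallmatrix} I & 0 \\ -\beta^*/b & 1\end{smallmatrix}\bigr]$ and its conjugate transpose) that
\begin{equation}
\det B = b\cdot\det(C).
\end{equation}
Moreover $C$ is Hermitian positive semi-definite: it is the Schur complement of the $(k,k)$ entry $b$ in $B$, hence positive-definite because $B$ is. Now the diagonal entries of $C$ are $c_{ii} = m - |\beta_i|^2/b$. Under the hypothesis $b \leq |\beta_i|$ we do \emph{not} immediately get $c_{ii} \leq m-b$; rather we get $|\beta_i|^2/b \geq |\beta_i| \geq b$ with the first inequality using $|\beta_i|\geq b$, so $c_{ii} = m - |\beta_i|^2/b \leq m - b$, which is exactly what we want. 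Applying Theorem~\ref{thm:Muir-Kelvin} to the positive-definite matrix $C$ gives
\begin{equation}
\det C \leq \prod_{i=1}^{k-1} c_{ii} = \prod_{i=1}^{k-1}\Bigl(m - \frac{|\beta_i|^2}{b}\Bigr) \leq (m-b)^{k-1},
\end{equation}
and therefore $\det B = b\det C \leq b(m-b)^{k-1}$. (I note the exponent should be $k-1$; if the statement really intends $(m-b)^k$ this is absorbing the leading factor of $b$ differently, and I would reconcile the indexing — $B$ is $k\times k$ so $G_{k-1}$ is $(k-1)\times(k-1)$, and the product has $k-1$ factors.)

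For the equality characterization, I would trace the two inequalities used. Equality in Muir–Kelvin forces $C$ to be diagonal, i.e. $c_{ij} = g_{ij} - \beta_i\beta_j^*/b = 0$ for all $i \neq j$, which is precisely the stated condition $g_{ij} = \beta_i\beta_j^*/b$. Equality in the chain $m - |\beta_i|^2/b \leq m-b$ forces $|\beta_i|^2/b = b$ combined with $|\beta_i|\geq b$; since $|\beta_i|^2/b \geq |\beta_i| \geq b$ with equality throughout only when $|\beta_i| = b$, we get $|\beta_i| = b$ for every $i$. Conversely, if $|\beta_i| = b$ and $g_{ij} = \beta_i\beta_j^*/b$ for all $i\neq j$, then $C$ is the diagonal matrix with entries $m-b$, so $\det B = b(m-b)^{k-1}$, closing the iff. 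The main obstacle I anticipate is purely bookkeeping: getting the Schur-complement determinant identity and the positive-definiteness of $C$ stated cleanly, and making sure the direction of the inequality $|\beta_i|\geq b \Rightarrow |\beta_i|^2/b \geq b$ is deployed correctly (it is the only place the hypothesis $b \leq |\beta_i|$ enters, and it is essential — without it the diagonal entries of $C$ need not be bounded by $m-b$). Everything else is routine linear algebra, and the positive-definiteness of $B$ (hence of $G_{k-1}$ and of the Schur complements) is what licenses every step.
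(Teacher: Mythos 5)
Your proposal is correct and follows essentially the same route as the paper: reduce to the Schur complement $G_{k-1}-\beta\beta^*/b$, note it is positive-definite by Sylvester's criterion, apply the Muir--Kelvin bound to its diagonal entries, and read off the equality conditions. You are also right that the exponent in the stated bound should be $k-1$ rather than $k$ (the paper's own product over the diagonal of the $(k-1)\times(k-1)$ Schur complement, and its later use of the lemma in Proposition~\ref{prop-GeneralBarba} with exponent $n-1$, confirm this).
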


    \begin{proof}
        A series of (Hermitian) elementary row and column operations shows that
        \begin{equation}
            \det B=\det \left[\begin{array}{c|c}
                G_{k-1}-\beta\beta^*/b & 0 \\
                \hline
                0 & b
            \end{array}\right]=b\det(G_{k-1}-\beta\beta^*/b).
        \end{equation}
        Sylvester's criterion implies that the Hermitian matrix $D=(G_{k-1}-\beta\beta^*/b)$ is positive-definite, so we can apply the Muir-Kelvin bound (Theorem \ref{thm:Muir-Kelvin}) to it, and obtain:
        \begin{equation}
            \det(G_{k-1}-\beta\beta^*/b)\leq \prod_{i=1}^k\left(m-\frac{|\beta_i|^2}{b}\right)\leq \prod_{i=1}^k (m-|\beta_i|)\leq (m-b)^k.
        \end{equation}
        The first inequality above is an equality if and only if $D$ is diagonal, which is equivalent to $g_{ij}=\beta_i\beta_j^*/b$ for all $i\neq j$. The remaining inequalities are equalities if and only if $|\beta_i|=b$ for all $i$.\qedhere
    \end{proof}

  \begin{proposition}\label{prop-GeneralBarba} Let $G$ be an $n\times n$ Hermitian positive-definite matrix, with diagonal entries equal to $m$. If $b$ is a positive real number such that $b\leq |G_{ij}|$ for all $i\neq j$, then
  \[\det G\leq (m+(n-1)b)(m-b)^{n-1}.\]
  Additionally, $G$ meets this bound with equality if and only if there is a (necessarily unique) diagonal matrix $\Delta=\diag(d_1,\dots,d_n)$, with $|d_i|=1$ for all $i$, such that
  \begin{equation}
  \Delta^*G\Delta=(n-b)I_n+bJ_n.
  \end{equation}
  \end{proposition}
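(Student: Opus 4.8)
First I would pass to the spectrum of $G$: being Hermitian positive-definite, $G$ has positive real eigenvalues $\lambda_1,\dots,\lambda_n$ with $\det G=\prod_i\lambda_i$, and the whole proof reduces to bounding this product given the first two power sums. Here $\sum_i\lambda_i=\tr G=nm$, while
$\sum_i\lambda_i^2=\tr(G^2)=\sum_{i,j}|G_{ij}|^2=nm^2+\sum_{i\neq j}|G_{ij}|^2\ge nm^2+n(n-1)b^2$,
using that the diagonal entries are all $m$ and $|G_{ij}|\ge b$ off the diagonal. (The case $n=1$ is immediate, so assume $n\ge2$.) Writing $Q:=\tr(G^2)$ and $c:=\sqrt{(Q-nm^2)/(n(n-1))}$, one has $c\ge b$, and also $c<m$ because $\sum_i\lambda_i^2<(\sum_i\lambda_i)^2$ (strict, since $G$ has at least two positive eigenvalues).

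Next I would invoke the elementary extremal fact behind the arguments of Barba~\cite{Barba-DetBound} and Wojtas~\cite{Wojtas-Determinants}: among $n$-tuples of positive reals with sum $nm$ and sum of squares $Q$, the product is maximised, with value $(m-c)^{n-1}\bigl(m+(n-1)c\bigr)$, at the tuple $(m-c,\dots,m-c,\,m+(n-1)c)$. The proof of this is a compactness argument together with Lagrange multipliers: the constraint set is compact, the product vanishes wherever some coordinate is $0$, and at an interior maximiser the conditions $1/\lambda_i=\mu+2\nu\lambda_i$ force each $\lambda_i$ to be a root of one fixed quadratic, hence to take at most two values (the all-equal case being excluded by $Q>nm^2$); one then compares the finitely many two-valued configurations and finds the product largest when exactly one coordinate takes the larger value. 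Since the eigenvalues of $G$ form one such tuple, $\det G\le(m-c)^{n-1}\bigl(m+(n-1)c\bigr)$. The last step is to observe that $t\mapsto(m-t)^{n-1}\bigl(m+(n-1)t\bigr)$ has derivative $-n(n-1)\,t\,(m-t)^{n-2}$, which is negative on $(0,m)$, so that function is strictly decreasing there; as $b\le c<m$ we conclude $\det G\le(m-b)^{n-1}\bigl(m+(n-1)b\bigr)$.

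For the equality statement, strict monotonicity of $h(t):=(m-t)^{n-1}\bigl(m+(n-1)t\bigr)$ forces $c=b$, equivalently $\sum_{i\neq j}|G_{ij}|^2=n(n-1)b^2$, so $|G_{ij}|=b$ for every $i\neq j$; and $\det G=h(b)$ together with uniqueness of the extremal configuration pins down the spectrum of $G$ to $\{\,m-b\ (\text{multiplicity }n-1),\ m+(n-1)b\,\}$. Taking a unit eigenvector $\mathbf v$ for the simple eigenvalue $m+(n-1)b$, the spectral decomposition is $G=(m-b)I_n+nb\,\mathbf v\mathbf v^*$, and then $G_{ii}=m$ gives $|v_i|^2=1/n$ for all $i$; with $\Delta:=\sqrt n\,\diag(v_1,\dots,v_n)$ one has $|d_i|=1$, $\mathbf v\mathbf v^*=\tfrac1n\,\Delta J_n\Delta^*$, and therefore $\Delta^*G\Delta=(m-b)I_n+bJ_n$, with $\Delta$ unique up to a unit scalar (two such matrices differ by a diagonal matrix commuting with $J_n$). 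Conversely any $G$ of this form is unitarily similar to $(m-b)I_n+bJ_n$ and so attains the bound. I expect the one genuinely non-routine point to be the comparison of two-valued configurations above — equivalently, the strict monotonicity of $h$; concretely the decisive sub-case is $(m-c)^{n-1}(m+(n-1)c)\ge(m+c)^{n-1}(m-(n-1)c)$ when $m>(n-1)c$. The trace identities, the spectral decomposition, and the equality bookkeeping are all straightforward.
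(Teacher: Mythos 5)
Your argument is correct in outline but follows a genuinely different route from the paper. The paper proves the bound by induction on $n$: it splits $\det G$ by multilinearity into $(m-b)\det G_{n-1}+\det B$, shows via a Schur complement and the Muir--Kelvin (Hadamard--Fischer) inequality that $\det B\le b(m-b)^{n-1}$ (Lemma~\ref{lemma-BLemma}), and reads off the equality structure $g_{ij}=g_{in}g_{jn}^*/b$ directly from that lemma. You instead pass to the spectrum and reduce everything to a constrained optimization: maximize $\prod\lambda_i$ subject to $\sum\lambda_i=nm$ and $\sum\lambda_i^2=Q$, then use monotonicity of $h(t)=(m-t)^{n-1}\bigl(m+(n-1)t\bigr)$ to relax $Q$ to its lower bound $nm^2+n(n-1)b^2$. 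Your trace identities, the deduction $b\le c<m$, the monotonicity computation $h'(t)=-n(n-1)t(m-t)^{n-2}$, and the equality bookkeeping (forcing $c=b$, hence $|G_{ij}|=b$ for all $i\ne j$, then recovering $G=(m-b)I_n+nb\,\mathbf v\mathbf v^*$ and $|v_i|^2=1/n$ from the constant diagonal) are all sound; indeed your observation that $\Delta$ is only unique up to a global unit scalar is more careful than the statement itself. The spectral route has the advantage of cleanly separating the eigenvalue optimization from the entry constraints, and it adapts to other off-diagonal hypotheses; the paper's inductive route is more elementary and hands you the precise entrywise equality conditions needed later in Corollary~\ref{cor-Barba3}.

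The one genuine gap is in your "elementary extremal fact." Your Lagrange analysis correctly shows that an interior maximizer takes at most two distinct values, but the candidate configurations are then parametrized by the multiplicity $k$ of the smaller value, $1\le k\le n-1$: $k$ copies of $m-s_k$ and $n-k$ copies of $m+t_k$ with $s_k=c\sqrt{(n-1)(n-k)/k}$ and $t_k=ks_k/(n-k)$. You only compare $k=n-1$ against $k=1$ (which is indeed equivalent to $h(c)\ge h(-c)$ and follows from your monotonicity computation), but the intermediate multiplicities $2\le k\le n-2$ are neither covered by that inequality nor obviously "non-decisive"; e.g.\ for $n=4$ one must separately check $(m-c)^3(m+3c)\ge(m-\sqrt3\,c)^2(m+\sqrt3\,c)^2$, which does not reduce to monotonicity of $h$. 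The claim is true, and one clean way to finish is a local three-coordinate exchange: if a maximizer had the larger value with multiplicity $\ge 2$, re-optimize three coordinates $(u,v,v)$ with $u<v$ keeping their sum and sum of squares fixed; the three-variable computation $(1-c)^2(1+2c)=1-3c^2+2c^3>1-3c^2-2c^3=(1+c)^2(1-2c)$ shows the product strictly increases, a contradiction. With that supplement (or a citation to the standard lemma on maximizing a product of positive reals with prescribed mean and variance), your proof is complete.
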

    \begin{proof}
         Proceeding by induction on $n$, we first show that the result holds for $n=2$:
        \begin{equation}
            \det\begin{bmatrix}
                m & g_{12}\\
                g_{12}^* & m
            \end{bmatrix} = m^2-|g_{12}|^2\leq m^2-b^2=(m+b)(m-b).
        \end{equation}
        Now, suppose the result holds for all matrices of order $n-1$ and let $G$ be an $n\times n$ matrix satisfying the hypotheses. Write, 
        \begin{equation}
            G = \left[\begin{array}{c|c}
                 G_{n-1}& \gamma \\
                 \hline
                 \gamma^*& m 
            \end{array}\right],
        \end{equation}
        where $\gamma=(g_{1n},g_{2n},\dots,g_{n-1,n})^{\intercal}$. By linearity of the determinant on rows:

        \begin{equation}\label{eq-ABDecomposition}
            \det G=\det\underbrace{\left[\begin{array}{c|c}
                 G_{n-1}& \gamma \\
                 \hline
                 \mathbf{0}_{n-1}^{\intercal}& m-b 
            \end{array}\right]}_{\textstyle A}+
            \det\underbrace{\left[\begin{array}{c|c}
                 G_{n-1}& \gamma \\
                 \hline
                 \gamma^* & b 
            \end{array}\right]}_{\textstyle B}
        \end{equation}
        If $\det(B)>0$, then by Sylvester's criterion, the matrix $B$ is positive-definite, so we can apply Lemma \ref{lemma-BLemma} to it, and obtain
        \begin{equation}
            \det G = (m-b)\det(G_{n-1})+\det(B)\leq (m-b)\det(G_{n-1})+b(m-b)^{n-1}.
        \end{equation}
        If $\det(B)\leq 0$, then trivially $\det G \leq (m-b)\det(G_{n-1})\leq (m-b)\det(G_{n-1})+b(m-b)^{n-1}.$ Now, use the induction hypothesis on $G_{n-1}$ to obtain:
        \begin{equation}
            \begin{split}
                \det G&\leq (m-b)(m+(n-2)b)(m-b)^{n-2}+b(m-b)^{n-1}\\
                &=(m+(n-1)b)(m-b)^{n-1}.
            \end{split}
        \end{equation}
        The matrix $G$ meets this bound with equality if and only if $\det(B)=b(m-b)^{n-1}$. By Lemma \ref{lemma-BLemma}, this is characterized by $|\gamma_{i}|=|g_{in}|=b$ for all $i$, and $g_{ij}=\gamma_{i}\gamma_j^*/b=g_{in}g_{jn}^*/b$. Let $\Delta=\diag(g_{1n}/b,\dots,g_{n-1,n}/b,1)$, and let $G'=\Delta^* G\Delta$. Then, $g'_{in}=b$ for $1\leq i\leq n-1$, and since the diagonal entries of $\Delta$ have modulus $1$, it follows that $\det G'=\det G$. Decomposing $G'=A'+B'$ as in Equation \ref{eq-ABDecomposition}, we have $\det B'= b(m-b)^{n-1}$, and by Lemma \ref{lemma-BLemma}  $g'_{ij}=g'_{in}g'_{jn}/b=b.$ This implies that
        \begin{equation}
            G'= (n-b)I_n+bJ_n,
        \end{equation}
        from which we conclude the result.\qedhere
        \end{proof}

  To apply Proposition \ref{prop-GeneralBarba} to matrices with entries in $\mu_{\ell}$ we first introduce some terminology and notation: An \textit{$n$-sum of $\ell$-th roots} is a sum of $n$ elements taken from the set $\mu_{\ell}$. Denote by $\sigma_{\ell}(n)$ the minimum absolute value of an $n$-sum of $\ell$-th roots, i.e.
  \begin{equation}
      \sigma_{\ell}(n)=\min \left\{|a_0+a_1\zeta_{\ell}+\dots+a_{\ell-1}\zeta^{\ell-1}|: a_i\in [n],\ \text{ and } \sum_{i=0}^{\ell-1}a_i=n\right\},
  \end{equation}
  where $[n]:=\{0,1,\dots,n\}$. Equivalently, $\sigma_{\ell}(n)$ is equal to the minimal absolute value of the inner product of two vectors of length $n$ with entries in $\mu_{\ell}$.

  \begin{theorem}[Generalized Barba bound. Theorem 5.2.3 \cite{Ponasso-Thesis}]\label{thm-BarbaRoots}
      Let $M$ be a matrix of order $n$ with entries in $\mu_{\ell}$. Suppose that $\sigma_{\ell}(n)>0$. Then,
      \begin{equation}
      |\det M|\leq \sqrt{(n+(n-1)\sigma_{\ell}(n)}(n-\sigma_{\ell}(n))^{(n-1)/2}.
      \end{equation}
      Furthermore, $M$ achieves the bound with equality if and only if there exists a diagonal matrix $\Delta=\diag(d_1,\dots,d_n)$, with $|d_i|=1$ for all $i$, such that the matrix $B=\Delta^*M$ satisfies
      \begin{equation}
          BB^*=(n-\sigma_{\ell}(n))I_n+\sigma_{\ell}(n)J_n
      \end{equation}
  \end{theorem}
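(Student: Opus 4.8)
The plan is to deduce the theorem from Proposition~\ref{prop-GeneralBarba} applied to the Gram matrix $G=MM^*$. If $\det M=0$ the inequality is immediate, since its right-hand side is nonnegative (one has $\sigma_\ell(n)\le n$, as witnessed by the all-ones $n$-sum $n\cdot 1$). So assume $\det M\ne 0$; then $G$ is Hermitian and, being the Gram matrix of a nonsingular matrix, positive-definite. Because every entry of $M$ has modulus $1$, the $i$-th diagonal entry of $G$ equals $\sum_{k=1}^{n}|M_{ik}|^{2}=n$, so $G$ has constant diagonal $m=n$. For $i\ne j$ the entry $G_{ij}=\sum_{k=1}^{n}M_{ik}\overline{M_{jk}}$ is a sum of $n$ terms, each of which is the product of an $\ell$-th root of unity and the conjugate of one, hence again an $\ell$-th root of unity; thus $G_{ij}$ is an $n$-sum of $\ell$-th roots and $|G_{ij}|\ge\sigma_\ell(n)$. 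Since $\sigma_\ell(n)>0$ by hypothesis, Proposition~\ref{prop-GeneralBarba} with $m=n$ and $b=\sigma_\ell(n)$ gives
\[
  \det(MM^*)=\det G\le\bigl(n+(n-1)\sigma_\ell(n)\bigr)\bigl(n-\sigma_\ell(n)\bigr)^{n-1},
\]
and as $\det(MM^*)=\det M\cdot\overline{\det M}=|\det M|^{2}$, taking square roots yields the stated bound.

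For the equality characterization, suppose first that $M$ attains the bound. For $\ell\ge 2$ and $n\ge 2$ one has $\sigma_\ell(n)<n$ (replace one summand of the all-ones $n$-sum by $\zeta_\ell$, which strictly shortens it), so the right-hand side is strictly positive and equality forces $\det M\ne 0$; the case $n=1$ is trivial, so we may assume $G$ is positive-definite. Then $\det G=\bigl(n+(n-1)\sigma_\ell(n)\bigr)\bigl(n-\sigma_\ell(n)\bigr)^{n-1}$, i.e.\ $G$ meets the bound of Proposition~\ref{prop-GeneralBarba}, so there is a diagonal matrix $\Delta=\diag(d_1,\dots,d_n)$ with $|d_i|=1$ such that $\Delta^*G\Delta=(n-\sigma_\ell(n))I_n+\sigma_\ell(n)J_n$. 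Setting $B=\Delta^*M$ we obtain $BB^*=\Delta^*MM^*\Delta=\Delta^*G\Delta$, which is exactly the asserted identity. Conversely, suppose some such $\Delta$ makes $B=\Delta^*M$ satisfy $BB^*=(n-\sigma_\ell(n))I_n+\sigma_\ell(n)J_n$. The eigenvalues of the matrix on the right are $n+(n-1)\sigma_\ell(n)$ (simple, with eigenvector the all-ones vector) and $n-\sigma_\ell(n)$ (with multiplicity $n-1$), so $\det(BB^*)=\bigl(n+(n-1)\sigma_\ell(n)\bigr)\bigl(n-\sigma_\ell(n)\bigr)^{n-1}$; since $|\det\Delta|=\prod_i|d_i|=1$ we have $|\det B|^{2}=|\det M|^{2}$, so $M$ attains the bound.

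I do not expect a substantial obstacle: the statement is essentially the specialization of Proposition~\ref{prop-GeneralBarba} to $G=MM^*$. The step closest to a difficulty is the bookkeeping in the equality case that transports the characterization from $G$ back to $B=\Delta^*M$; the remaining points needing a little care are observing that the off-diagonal Gram entries are genuinely $n$-sums of $\ell$-th roots (so that $|G_{ij}|\ge\sigma_\ell(n)$ is legitimate), and the minor facts that $\sigma_\ell(n)\le n$ (so the square root is defined) and that the bound is strictly positive for $n\ge 2$ (so equality forces $G$ positive-definite).
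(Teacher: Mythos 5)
Your proof is correct and follows exactly the paper's route: the paper's entire proof is ``Apply Proposition~\ref{prop-GeneralBarba} to the Gram matrix $G=MM^*$, noticing that $|G_{ij}|\geq\sigma_{\ell}(n)$,'' and you have simply supplied the routine verifications (constant diagonal $n$, off-diagonal entries being $n$-sums of $\ell$-th roots, the degenerate singular case, and the transport of the equality condition to $B=\Delta^*M$) that the paper leaves implicit.
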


  \begin{proof}
  Apply Proposition \ref{prop-GeneralBarba} to the Gram matrix $G=MM^*$, noticing that $|G_{ij}|\geq \sigma_{\ell}(n)$.\qedhere
  \end{proof}
    
    In general, the matrix $B$ of Theorem \ref{thm-BarbaRoots} may be inequivalent to $M$ in the sense of Definition \ref{def-MonomialEquivalence}, since the entries of $\Delta=\diag(d_1,\dots,d_n)$ need not belong to $\mu_{\ell}$. However, for $\ell=3$ we have:

\begin{corollary}\label{cor-Barba3} Let $M$ be an $n\times n$ matrix with entries in $\mu_3 = \{1,\omega,\omega^2\}$. Then, if $n>2$ and $n\equiv 1,2\pmod{3}$, 
\begin{equation}\label{eq-BarbaBound3}
    |\det M|\leq \sqrt{2n-1}(n-1)^{(n-1)/2}.
\end{equation}
Furthermore, $M$ meets this bound with equality if and only if $M$ is monomially equivalent to a matrix $B$ satisfying
    \begin{equation}
        BB^*=(n-1)I_n+J_n.
    \end{equation}
\end{corollary}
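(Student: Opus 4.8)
The plan is to derive both assertions from Theorem~\ref{thm-BarbaRoots} once the value $\sigma_3(n)$ is known, and then to carry out a short computation in the Eisenstein integers $\Z[\omega]$ to pin down the diagonal matrix appearing in the equality case. Throughout I would write $\mathfrak{p}=(1-\omega)$ for the prime of $\Z[\omega]$ lying over $3$, so that $\Z[\omega]/\mathfrak{p}\cong\F_3$ and $\omega\equiv 1\pmod{\mathfrak{p}}$, and I would use freely that an Eisenstein integer of absolute value $1$ is a unit, hence lies in $\mu_6=\{\pm1,\pm\omega,\pm\omega^2\}$.

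First I would show $\sigma_3(n)=1$. An $n$-sum of third roots is an element $z=a_0+a_1\omega+a_2\omega^2\in\Z[\omega]$ with $a_i\ge 0$ and $a_0+a_1+a_2=n$; reducing modulo $\mathfrak{p}$ gives $z\equiv a_0+a_1+a_2=n$. Hence if $3\nmid n$ then $z\not\equiv 0\pmod{\mathfrak{p}}$, so $z\ne 0$ and $|z|^2=z\overline{z}$ is a positive rational integer, whence $|z|\ge 1$; this gives $\sigma_3(n)\ge 1$. For the reverse inequality, if $n\equiv 1\pmod 3$ take $(n-1)/3$ copies of each of $1,\omega,\omega^2$ together with one extra $1$, and if $n\equiv 2\pmod 3$ take $(n-2)/3$ copies of each together with one extra $1$ and one extra $\omega$; the hypothesis $n>2$ makes these multiplicities nonnegative, and the resulting $n$-sums are $1$ and $1+\omega=-\omega^2$, both of modulus $1$. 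So $\sigma_3(n)=1$, and substituting this into Theorem~\ref{thm-BarbaRoots} yields exactly the inequality $|\det M|\le\sqrt{2n-1}(n-1)^{(n-1)/2}$.

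For the equality statement I would argue as follows, first in the case $n\equiv 1\pmod 3$. If $M$ attains the bound then by Theorem~\ref{thm-BarbaRoots} there is a diagonal matrix $\Delta$ with unit-modulus diagonal such that $\Delta^*(MM^*)\Delta=(n-1)I_n+J_n$; putting $v=\Delta\mathbf{1}$, this says the Gram matrix $G=MM^*$ equals $(n-1)I_n+vv^*$, so $G_{ij}=v_i\overline{v_j}$ whenever $i\ne j$, and in particular $|G_{ij}|=1$. On the other hand $G_{ij}=\sum_k M_{ik}\overline{M_{jk}}\in\Z[\omega]$, so each off-diagonal $G_{ij}$ lies in $\mu_6$; and reducing that same sum modulo $\mathfrak{p}$ gives $G_{ij}\equiv n\equiv 1\pmod{\mathfrak{p}}$, which among the six units forces $G_{ij}\in\mu_3$. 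I would then set $\Delta_1=\diag(1,\overline{G_{12}},\dots,\overline{G_{1n}})$, a diagonal matrix with entries in $\mu_3$, and $B=\Delta_1^*M$; then $B$ is monomially equivalent to $M$ in the sense of Definition~\ref{def-MonomialEquivalence}, and since $|v_1|=1$ the identity $G_{1j}=v_1\overline{v_j}$ gives $v_j=\overline{G_{1j}}\,v_1$, from which $BB^*=\Delta_1^*G\Delta_1=(n-1)I_n+J_n$ follows by a direct entrywise check. Conversely, if $M$ is monomially equivalent to some $B$ with $BB^*=(n-1)I_n+J_n$, then $|\det M|=|\det B|$ (monomial equivalence preserves the absolute value of the determinant) and $|\det B|^2=\det\bigl((n-1)I_n+J_n\bigr)=(2n-1)(n-1)^{n-1}$, so $M$ meets the bound.

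Finally, when $n\equiv 2\pmod 3$ I expect the equality case to be vacuous. Indeed, if $M$ met the bound then, arguing as above, $G=MM^*$ would have off-diagonal entries $G_{ij}=v_i\overline{v_j}\in\mu_6$ with $G_{ij}\equiv n\equiv-1\pmod{\mathfrak{p}}$, hence $G_{ij}\in-\mu_3$ for all $i\ne j$; but taking three distinct indices $i,j,k$ (which exist since $n>2$) and using $G_{ij}G_{jk}=v_i|v_j|^2\overline{v_k}=G_{ik}$ would force $G_{ik}\in(-\mu_3)(-\mu_3)=\mu_3$, contradicting $\mu_3\cap(-\mu_3)=\emptyset$. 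Likewise, reducing $(BB^*)_{ij}=1$ modulo $\mathfrak{p}$ shows that no matrix $B$ over $\mu_3$ satisfies $BB^*=(n-1)I_n+J_n$ when $n\equiv 2\pmod 3$. So both sides of the stated equivalence are false and the corollary holds in this case too. I expect the evaluation of $\sigma_3(n)$ to be the only step with real content, and the main subtlety to be recognizing that—unlike the $n\equiv 1$ case—the equality case for $n\equiv 2\pmod 3$ is not a characterization but is simply empty; this is where the hypothesis $n>2$ is genuinely used.
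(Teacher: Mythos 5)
Your proof is correct and follows essentially the same route as the paper: compute $\sigma_3(n)=1$, invoke Theorem~\ref{thm-BarbaRoots}, and then reduce the off-diagonal Gram entries modulo $(1-\omega)$ to force them into $\mu_3$ (and to show the equality case is empty when $n\equiv 2\pmod 3$, which the paper obtains from the congruence $n\equiv n^2\equiv 1\pmod{(1-\omega)}$). The only difference is cosmetic: you prove inline, via the unit group of $\Z[\omega]$, the classification of unit-modulus $n$-sums that the paper cites from Lemma 5.2.1 of \cite{Ponasso-Thesis}.
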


\begin{proof}
  The ring $\Z[\omega]$ is the hexagonal lattice in $\C$, and from this, it is easy to see that $n\equiv 1,2\pmod{3}$ implies $\sigma_3(n)=1$. Using Theorem \ref{thm-BarbaRoots}, we find that Inequality \ref{eq-BarbaBound3} holds in this case. One can also show (see Lemma 5.2.1. in \cite{Ponasso-Thesis}) that if $\sigma$ is an $n$-sum of third-roots with $|\sigma|=1$, then
  \begin{equation}
    \begin{split}
        &\text{if } n\equiv 1\pmod{3},\ \sigma\in\{1,\omega,\omega^2\},\text{ and }\\
        &\text{if } n\equiv 2\pmod{3},\ \sigma\in\{-1,-\omega,-\omega^2\}.
    \end{split}
  \end{equation}
  Suppose that $M$ achieves equality in Inequality \ref{eq-BarbaBound3}. Let $G=MM^*$ and $\Delta=\diag(g_{1n}, \dots, g_{n-1,n},1)$, the the matrix $B=\Delta^*M$ satisfies $BB^*=(n-1)I_n+J_n$. To conclude that $B$ is monomially equivalent to $M$ we only need to show that the diagonal entries of $\Delta$ are third roots of unity: By the proof of Proposition \ref{prop-GeneralBarba}, the identity $g_{ij} = g_{in}g_{jn}^*$ holds for $i\neq j$. Since $n>2$, we have in particular that $g_{12}=g_{13}g_{23}^*$; considering this equation modulo $(1-\omega)$, we find
    \begin{equation}
        n\equiv g_{12}=g_{13}g_{23}^* \equiv n^2\equiv 1\pmod{(1-\omega)}
    \end{equation}
    This implies $n\equiv 1\pmod{3}$, and in particular all elements $g_{in}$ are third roots of unity. So the entries of $B$ also belong to $\mu_3$.
\end{proof}

\begin{definition}
We say that an $n\times n$ matrix $B$ with entries of modulus $1$ is a \textit{Barba matrix} if $BB^*=(n-1)I_n+J_n$.
\end{definition}

\begin{remark} The proof of Corollary \ref{cor-Barba3} shows that Barba matrices over the third roots can only exist at orders $n\equiv 1\pmod{3}$.
  \end{remark}

\begin{theorem}\label{thm-BarbaNormal}
    Let $B$ be a Barba matrix with entries in $\mu_{\ell}$, $\ell=2,3,4$. Then there is a normal Barba matrix with entries in $\mu_{\ell}$ and constant row and column sum.
\end{theorem}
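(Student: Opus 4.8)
The plan is to show that $B$ is monomially equivalent to a Barba matrix $B'$ all of whose column sums are equal; once that is achieved, normality and constant row sums follow automatically from the defining identity.

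The first step is to extract from $BB^* = (n-1)I_n + J_n$ a companion identity for $B^*B$. Since $J_n = \mathbf 1\mathbf 1^{\intercal}$ has eigenvalues $n$ (on $\mathbf 1$) and $0$, the matrix $C := (n-1)I_n + J_n$ is invertible, with $C^{-1} = \tfrac{1}{n-1}I_n - \tfrac{1}{(2n-1)(n-1)}J_n$; in particular $B$ is invertible and $B^{-1} = B^* C^{-1}$. Writing $t = (t_1,\dots,t_n)^{\intercal}$ for the vector of column sums of $B$, so that $\mathbf 1^{\intercal} B = t^{\intercal}$ and $B^*\mathbf 1 = \overline{t}$, one multiplies out $B^{-1}B = I_n$ using the formula for $C^{-1}$ and the factorization $J_n = \mathbf 1\mathbf 1^{\intercal}$ to obtain
\[
B^*B = (n-1)I_n + \frac{1}{2n-1}\,\overline{t}\,t^{\intercal}.
\]
Comparing diagonal entries — each equal to $n$ since every entry of $B$ has modulus $1$ — forces $|t_j|^2 = 2n-1$ for every $j$.

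The crux is the off-diagonal comparison. For $i \ne j$, the entry $(B^*B)_{ij}$ is an inner product of two columns of $B$, hence an $n$-sum of $\ell$-th roots of unity, so it lies in $\Z[\zeta_\ell]$; combined with the identity above, $\overline{t_i}t_j/(2n-1) \in \Z[\zeta_\ell]$, and since $|\overline{t_i}t_j| = |t_i|\,|t_j| = 2n-1$ this quotient has absolute value $1$. The only elements of $\Z$, $\Z[i]$, $\Z[\omega]$ of absolute value $1$ are, respectively, $\pm 1$, the fourth roots of unity, and the sixth roots of unity, so $t_j = u_{j} t_1$ with $u_j$ a root of unity that lies in $\mu_\ell$ when $\ell = 2, 4$. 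For $\ell = 3$ the only remaining gap is to exclude $u_j \in \{-1,-\omega,-\omega^2\}$: here $BB^* = (n-1)I_n + J_n$ has off-diagonal entries equal to $1$, and writing such an entry as an $n$-sum of cube roots $a + b\omega + c\omega^2$ with $a+b+c = n$ shows $1 \equiv n \pmod{(1-\omega)}$, hence $n \equiv 1 \pmod 3$; then every column sum of $B$ is $\equiv n \pmod{(1-\omega)}$ and hence invertible in $\Z[\omega]/(1-\omega) \cong \F_3$, so cancelling in $t_j = u_j t_1$ gives $u_j \equiv 1 \pmod{(1-\omega)}$, which among sixth roots of unity forces $u_j \in \{1,\omega,\omega^2\} = \mu_3$. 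Thus in all three cases $t_j = u_j t_1$ with $u_j \in \mu_\ell$.

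To conclude, set $\Delta = \diag(\overline{u_1},\dots,\overline{u_n})$, a diagonal matrix with entries in $\mu_\ell$ (taking $u_1 = 1$), and put $B' = B\Delta$. Right multiplication by a diagonal unitary leaves $B'B'^* = BB^* = (n-1)I_n + J_n$ unchanged, so $B'$ is again a Barba matrix over $\mu_\ell$, monomially equivalent to $B$, and every column sum of $B'$ equals $t_1$. Substituting $t = t_1\mathbf 1$ into the companion identity gives $B'^*B' = (n-1)I_n + \tfrac{|t_1|^2}{2n-1}J_n = (n-1)I_n + J_n = B'B'^*$, so $B'$ is normal; and $B'(B'^*\mathbf 1) = (B'B'^*)\mathbf 1 = (2n-1)\mathbf 1$ together with $B'^*\mathbf 1 = \overline{t_1}\mathbf 1$ yields $B'\mathbf 1 = \bigl((2n-1)/\overline{t_1}\bigr)\mathbf 1 = t_1\mathbf 1$, so the row sums are constant as well (and equal to the column sums). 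The main obstacle is precisely this off-diagonal step: one must first notice that the Barba identity pins every column sum to modulus exactly $\sqrt{2n-1}$, then use integrality over $\Z[\zeta_\ell]$ to force the column sums to be pairwise proportional by roots of unity, and for $\ell = 3$ carry out the extra congruence argument modulo $1-\omega$ to rule out the ``negative'' units.
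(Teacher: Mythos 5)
Your proof is correct, but it takes a different route from the paper's. The paper's argument is short and leverages machinery already in place: since $|\det B^*|=|\det B|$, the adjoint $B^*$ also meets the Barba bound with equality, so the equality characterization in the generalized Barba bound (Corollary~\ref{cor-Barba3} for $\ell=3$; Barba/Wojtas and Cohn for $\ell=2,4$) hands over a diagonal $\Delta$ with entries in $\mu_\ell$ satisfying $\Delta^*B^*B\Delta=(n-1)I_n+J_n$; then $N=B\Delta^*$ is normal, and the associativity identity $(NN^*)N=N(N^*N)$ shows $N$ commutes with $J_n$, giving constant row and column sums. You instead bypass the equality characterization entirely: you invert $C=(n-1)I_n+J_n$ to get the explicit rank-one identity $B^*B=(n-1)I_n+\tfrac{1}{2n-1}\overline{t}\,t^{\intercal}$, read off $|t_j|^2=2n-1$ from the diagonal, and then use integrality of column inner products in $\Z[\zeta_\ell]$ together with the classification of absolute-value-one elements of $\Z$, $\Z[i]$, $\Z[\omega]$ to force $t_j=u_jt_1$ with $u_j$ a root of unity; your congruence argument modulo $(1-\omega)$ to exclude the negative sixth roots when $\ell=3$ is exactly parallel to the paper's argument in Corollary~\ref{cor-Barba3} (and reproves, as a byproduct, that $n\equiv 1\pmod 3$ there). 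What your approach buys is self-containedness and extra explicit information — the constant row and column sum is pinned down as a specific element of modulus $\sqrt{2n-1}$ — at the cost of redoing, for $B^*$, the analysis that the paper packages into its equality-case theorems; the paper's route is shorter but opaque about what the common sum actually is.
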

\begin{proof}
    By definition, $BB^*=(n-1)I_n+J_n$, and $|\det(B)|=\sqrt{2n-1}(n-1)^{(n-1)/2}$. Since $|\det(B)|=|\det(B^*)|$, then $B^*$ also meets the Barba bound with equality and there is a diagonal matrix $\Delta$ with diagonal entries in $\mu_{\ell}$ such that
    \begin{equation}
        \Delta^* B^*B\Delta=(n-1)I_n+J_n.
    \end{equation}
    Let $N=B\Delta^*$, then $NN^*=N^*N=(n-1)I_n+J_n$. Furthermore, associativity implies $(NN^*)N=N(N^*N)$, so $N$ commutes with $J_n$, i.e., $NJ_n=J_nN$, so $N$ has a constant row sum and column sum.
\end{proof}
\begin{remark} The assumption that $\ell=2,3,4$ is only used here to guarantee that the matrix $N$ will have its entries in $\mu_{\ell}$. We showed this fact here for $\ell=3$; the case $\ell=2$ is found in \cite{Barba-DetBound, Wojtas-Determinants}, and the case $\ell=4$ in \cite{Cohn-ComplexDOptimal}.
\end{remark}

\begin{corollary}[cf. Neubauer-Radcliffe; Brouwer \cite{Brouwer, Neubauer-Radcliffe}]
  There is a Barba matrix with entries $\pm 1$ of order $(q+1)^2+q^2$ for every odd prime power $q$.
\end{corollary}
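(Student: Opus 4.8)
The plan is to realise the Barba matrix as a $\pm1$ matrix of the form $J_n-2C$, where $C$ is the incidence matrix of a suitable symmetric combinatorial design, and to obtain that design from the Desarguesian plane of order $q$. Write $n=(q+1)^2+q^2=2q^2+2q+1$; the arithmetic facts making this order eligible are $n\equiv 1\pmod 4$ and $2n-1=(2q+1)^2$. For the reduction, suppose $C$ is the $v\times v$ incidence matrix of a symmetric $2$-$(v,k,\lambda)$ design, so that $CC^{\intercal}=(k-\lambda)I_v+\lambda J_v$ and $CJ_v=J_vC=kJ_v$, and put $B:=J_v-2C$, a $\pm1$ matrix. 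Then
\[
  BB^{\intercal}=(J_v-2C)(J_v-2C^{\intercal})=vJ_v-4kJ_v+4CC^{\intercal}=4(k-\lambda)I_v+(v-4(k-\lambda))J_v,
\]
so $B$ is a Barba matrix exactly when $4(k-\lambda)=v-1$ (the $J_v$-coefficient then equalling $1$ automatically). With $v=n$ this forces $k-\lambda=\tfrac{1}{2}q(q+1)=\binom{q+1}{2}$, and together with $k(k-1)=\lambda(v-1)$ the admissible parameter pairs are $(k,\lambda)=(q^2,\binom{q}{2})$ and its complement $((q+1)^2,\binom{q+2}{2})$. (By Theorem~\ref{thm-BarbaNormal} every $\pm1$ Barba matrix of order $n$ normalises to this shape, so the two existence problems are in fact equivalent; only the ``if'' direction is needed here.) Thus it suffices to construct a symmetric $2$-$(2q^2+2q+1,\,q^2,\,\binom{q}{2})$ design for every odd prime power $q$.

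This is the substantive step, and it is where the hypothesis on $q$ enters --- through the existence of $\mathrm{PG}(2,q)$. I would recall Brouwer's construction of this infinite family of symmetric designs \cite{Brouwer} (equivalently, the finite-field construction behind the Neubauer--Radcliffe $D$-optimal designs \cite{Neubauer-Radcliffe}): one takes the point set to be the disjoint union of the $q^2+q+1$ points of $\mathrm{PG}(2,q)$ with the $q^2+q$ lines of $\mathrm{PG}(2,q)$ missing a fixed point --- note $q^2+q+1+q^2+q=n$ --- and defines the blocks geometrically so that each has exactly $q^2$ points. The real work is then the verification of the $2$-design axiom: that any two distinct points of this structure lie in exactly $\binom{q}{2}$ common blocks. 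This splits into the three cases (two ``old'' points, an ``old'' point together with a line, two lines), and in each case reduces to a routine incidence count in $\mathrm{PG}(2,q)$ --- how many lines pass through two given points, how many points lie on two given lines, how a fixed line meets a pencil, and so on.

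With $C$ the incidence matrix of such a design, $B:=J_n-2C$ is then a $\pm1$ Barba matrix of order $(q+1)^2+q^2$, which is the corollary. The routine ingredients are the arithmetic of $n$, the one-line identity above, and its compatibility with the normal form of Theorem~\ref{thm-BarbaNormal}; the hard part will be the middle step --- making the block definition explicit and checking the block-intersection count uniformly in $q$. A self-contained alternative, if one prefers to avoid the geometry altogether, is to write down directly a two-block, group-developed $\pm1$ matrix of order $2(q^2+q)+1$ built from $\mathrm{GF}(q)$ in the manner of Neubauer--Radcliffe and verify $BB^{\intercal}=(n-1)I_n+J_n$ by hand; the bookkeeping is comparable.
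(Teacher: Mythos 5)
Your proposal is correct and follows essentially the same route as the paper: reduce the existence of a $\pm 1$ Barba matrix of order $n=2q^2+2q+1$ via $B=J_n-2C$ to the existence of a symmetric $2$-$(2q^2+2q+1,\,q^2,\,\binom{q}{2})$ design, derive those parameters from the quadratic constraint $4(k-\lambda)=n-1$ together with $k(k-1)=\lambda(n-1)$, and invoke Brouwer's construction \cite{Brouwer} for the design. The only (cosmetic) difference is direction: you verify the sufficiency $C\mapsto J_n-2C$ directly, which is the implication actually needed, whereas the paper writes the computation starting from a putative Barba matrix and then appeals to the (evident) reversibility; both treat Brouwer's design existence as a black box, as the statement's attribution invites.
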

\begin{proof} Let $B$ be a $\pm 1$ Barba matrix of order $n$; by Theorem \ref{thm-BarbaNormal}, we may assume that $B$ is normal and has constant row and column sum $\alpha$. Let $N=(J_n-B)/2$ be the $(0,1)$-matrix obtained by replacing the entries equal to $+1$ in $B$ with $0$, and $-1$ with $+1$. Since $B$ has constant row sum $\alpha$, we have that $\alpha J_n = BJ_n$ and $NJ_n = J_nN = k J_n$, where $k:=(n-\alpha)/2\in\Z$. Since $B=J_n-2N$, we find
  \begin{equation}
    (n-1)I_n+J_n=BB^{\intercal}=4NN^{\intercal}+(n-4k)J_n.
  \end{equation}
  From where we conclude,
  \begin{equation}
    NN^{\intercal} = \frac{n-1}{4}I_n + (k-\frac{n-1}{4})J_n.
  \end{equation}
  This implies that $N$ is the incidence matrix of a symmetric $2$-design with parameters $(v,k,\lambda)=(n,k,k-(n-1)/4)$. Re-writing these parameters, we can see that there exists an infinite family of designs satisfying this condition:  Since $k>0$, there is a positive real number $t\in\R$ such that $k=t^2$. We then have that $4\lambda=4k-(n-1)=4t^2-x$, where $x=n-1$. Since $(v,k,\lambda)$ are the parameters of a $2$-design, we have $(v-1)\lambda=k(k-1)$. Therefore,
  \begin{equation}
    4x\lambda = 4k(k-1)=4t^2(t^2-1).
  \end{equation}
  Now, substituting $4\lambda=4t^2-x$ we find
  \begin{equation}
    x^2-4t^2x+4t^2(t^2-1)=0.
  \end{equation}
  The solutions to this quadratic equation on $x$ are $x=2t^2\pm 2t$, which implies that $v=x+1=t^2 + (t\pm 1)^2$. Since $2\lambda = 2k-\frac{x}{2}=t^2\mp t$, we have that $\lambda={t\choose 2}$ or $\lambda={t+1\choose 2}$. Finally, since $t^2=k$ is an integer, and $\lambda=t^2\mp t\in \Z$, it follows that $t\in \Z$. So the parameters $(v,k,\lambda)$ can be rewritten as
  \begin{equation}
    (v,k,\lambda)=(t^2+(t+1)^2, t^2,{t\choose 2}),\text{ or } (t^2+(t-1)^2, t^2, {t+1\choose 2}). 
  \end{equation}
  The existence of symmetric $2$-$(t^2+(t+1)^2,t^2,{t\choose 2})$ designs for $t=q$ an odd prime power was established by Brouwer in \cite{Brouwer}. Therefore, there is a Barba matrix of order $q^2+(q+1)^2$ for each odd prime power $q$.

\end{proof}

%%%%%%%%%%%%%%%%%%%%%%%
% GENERAL LOWER BOUND %
%%%%%%%%%%%%%%%%%%%%%%%
\subsection{A general determinantal lower bound}
\begin{proposition}\normalfont \label{prop-ConstRSBoundComp} Let $H$ be an Hadamard matrix of order $n$ with constant row sum. Let $M$ be the bordered matrix,
\begin{equation}
M=\left[
\begin{array}{cc}
1 & \mathbf{1}_n^{\intercal}\\
\mathbf{1}_n & H
\end{array}
\right].
\end{equation}
Then
\begin{equation}
|\det M|\geq (\sqrt{n}+1)n^{n/2}.
\end{equation}
\end{proposition}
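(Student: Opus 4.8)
The plan is to evaluate $\det M$ exactly by a block (Schur‑complement) expansion and then invoke the Hadamard bound $|\det H| = n^{n/2}$.

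Write $s$ for the common row sum, so $H\mathbf{1}_n = s\mathbf{1}_n$. The first step is to extract two elementary identities from the orthogonality $HH^{\intercal} = H^{\intercal}H = nI_n$. Computing $\mathbf{1}_n^{\intercal}H^{\intercal}H\mathbf{1}_n$ in two ways — as $(H\mathbf{1}_n)^{\intercal}(H\mathbf{1}_n) = s^2 n$ and as $\mathbf{1}_n^{\intercal}(nI_n)\mathbf{1}_n = n^2$ — gives $s^2 = n$; in particular $\sqrt{n}\in\Z$ (so a Hadamard matrix with constant row sum can only exist at square orders) and $s = \pm\sqrt{n}$. Also $\mathbf{1}_n^{\intercal}H^{\intercal}\mathbf{1}_n = (H\mathbf{1}_n)^{\intercal}\mathbf{1}_n = s\,\mathbf{1}_n^{\intercal}\mathbf{1}_n = sn$.

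For the second step, view $M$ as a $2\times 2$ block matrix with the invertible block $H$ in the bottom‑right corner; the Schur‑complement determinant formula yields
\begin{equation}
\det M = \det H\cdot\bigl(1 - \mathbf{1}_n^{\intercal}H^{-1}\mathbf{1}_n\bigr).
\end{equation}
Since $H^{-1} = \tfrac1n H^{\intercal}$, the bracketed scalar equals $1 - \tfrac1n\mathbf{1}_n^{\intercal}H^{\intercal}\mathbf{1}_n = 1 - s$, so $\det M = (1-s)\det H$ and hence $|\det M| = |1-s|\,n^{n/2}$ by Theorem \ref{thm-Hadamard}. (One could equally well compute $\det(MM^{\intercal})$, where $MM^{\intercal} = \left[\begin{smallmatrix} n+1 & (1+s)\mathbf{1}_n^{\intercal} \\ (1+s)\mathbf{1}_n & nI_n + J_n\end{smallmatrix}\right]$; a Schur complement together with the known spectrum of $nI_n + J_n$ gives $|\det M|^2 = n^n(n+1-2s) = n^n(1-s)^2$, the same value.)

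The last step is the sign of $s$: with $s = -\sqrt{n}$ we obtain $|\det M| = (\sqrt{n}+1)\,n^{n/2}$, exactly the claimed quantity (indeed with equality), whereas $s = +\sqrt{n}$ would give only $(\sqrt{n}-1)\,n^{n/2}$. This is the one point that needs care, and it is harmless: negating $H$ preserves the Hadamard property and flips the sign of its row sum, so we may and do normalize $H$ to have row sum $-\sqrt{n}$, after which the bound follows. Thus the main obstacle is purely the bookkeeping of this sign normalization; the determinant computation itself is a single Schur‑complement identity once $s^2 = n$ and $\mathbf{1}_n^{\intercal}H^{\intercal}\mathbf{1}_n = sn$ are in hand.
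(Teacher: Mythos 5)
Your determinant computation is correct and in substance the same as the paper's: the paper forms $MM^*$ and applies elementary row/column operations, you take a Schur complement on $M$ directly (and sketch the $MM^{\intercal}$ route as an aside), and both yield $|\det M|^2=(n+1-2\re(s))\,n^{n}$, i.e.\ $|\det M|=|1-s|\,n^{n/2}$. One caveat on scope: in this paper $H$ is a \emph{complex} Hadamard matrix (the proposition is applied to Bush-type $\BH(n^2,\ell)$ matrices), so the constant row sum $s$ is a complex number with $ss^*=n$; your derivation of $s^2=n$ and $s=\pm\sqrt{n}$ uses $H^{\intercal}$ and realness, and only covers the $\pm1$ case. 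In the complex setting the relevant quantity is $\re(s)\in[-\sqrt{n},\sqrt{n}]$, and your negation trick is unavailable for odd $\ell$, since then $-\mu_{\ell}\neq\mu_{\ell}$.

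More importantly, you have put your finger on the real issue: $|1-s|\,n^{n/2}$ equals $(\sqrt{n}+1)n^{n/2}$ only when $s=-\sqrt{n}$, and equals $(\sqrt{n}-1)n^{n/2}$ when $s=+\sqrt{n}$. Your fix of replacing $H$ by $-H$ changes the matrix $M$ in the statement, so what you actually prove is that \emph{some} bordered matrix built from $\pm H$ attains the bound, not that the given $M$ does; for the literal statement this is a genuine gap. For example, $n=4$ with $H=J_4-2I_4$ has $s=+2$, and the corresponding $M$ has $|\det M|=16<48=(\sqrt{4}+1)\,4^{2}$. However, the fault here lies with the proposition rather than with your argument: the paper's own proof deduces from $\re(s)\le\sqrt{n}$ that $|\det M|^2\ge(n+1-2\sqrt{n})n^{n}$ and then asserts $n+1-2\sqrt{n}=(\sqrt{n}+1)^2$, which is false (it equals $(\sqrt{n}-1)^2$). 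The stated bound requires the normalization $\re(s)=-\sqrt{n}$, or a sign-adjusted border; this hypothesis also matters downstream, since the Bush-type matrix fed into Theorem \ref{thm-BushLowerBound} has row sum $+n$, not $-n$. In short, your proof is the careful version of the paper's, and the discrepancy you flagged is an error in the source, not in your reasoning.
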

\begin{proof}
Since $H$ has a constant row sum, there exists a complex number $s$ such that $HJ_n=sJ_n$. Using the fact that $H$ is Hadamard, it is easy to show that $H$ has a constant column sum $s$, and that $ss^*=n$. We can compute the Gram matrix of $M$ as follows,
\begin{equation}
MM^*=
\begin{bmatrix}
n+1 & (1+s^*)\mathbf{1}_n^{\intercal}\\
(1+s)\mathbf{1}_n & nI_n+J_n
\end{bmatrix}.
\end{equation}
Let $\alpha=1+s$, a series of elementary row and column operations give the following similarity of matrices,
\begin{equation}
\begin{bmatrix}
n+1 & \alpha^*\mathbf{1}\\
\alpha\mathbf{1} & nI_n+J_n
\end{bmatrix}
=\left[
\begin{array}{cc|c}
n+1 & \alpha^* & \alpha^*\mathbf{1}_{n-1}^{\intercal}\\
\alpha &n+1 & \mathbf{1}_{n-1}^{\intercal}\\
\hline
\alpha\mathbf{1}_{n-1}&\mathbf{1}_{n-1} & nI_{n-1}+J_{n-1}
\end{array}
\right]
\simeq
\left[
\begin{array}{cc|c}
n+1 & \alpha^* & \mathbf{0}_{n-1}^{\intercal}\\
n\alpha & 2n & \mathbf{0}_{n-1}^{\intercal}\\
\hline
\alpha\mathbf{1}_{n-1} & \mathbf{1}_{n-1} & nI_{n-1}
\end{array}\right].
\end{equation}

Taking determinants, and using the fact that $|\alpha|^2=\alpha\alpha^*=n+1+2\re(s)$, we find
\begin{equation}
|\det M|^2=(2(n+1)-|\alpha|^2)n^{n}=(n+1-2\re(s))n^n.
\end{equation}
From the fact that $\re(s)\leq \sqrt{n}$, it follows that
\begin{equation}
|\det M|^2\geq (n+1-2\sqrt{n})n^n=(\sqrt{n}+1)^2n^n,
\end{equation}
and taking square roots the result follows. \qedhere
\end{proof}

 In particular, Proposition \ref{prop-ConstRSBoundComp} shows that the existence of a $\BH(n,\ell)$ matrix with constant row-sum, then there is a large-determinant matrix of order $n+1$ with entries in $\mu_{\ell}$. A \textit{Bush-type} Hadamard matrix is an Hadamard matrix of order $n^2$ consisting of $n^2$ blocks $E_{ij}$ of order $n$, such that $E_{ij}J_n=\delta_{ij}J_n$. In other words, a Bush-type Hadamard matrix has diagonal blocks equal to $J_n$, and all other blocks have constant row-sum equal to $0$. The following construction of Bush-type matrices is well-known, see for example Kharaghani's paper \cite{Kharaghani-BushType}.
\begin{proposition}[cf. \cite{Kharaghani-BushType}] \normalfont\label{thm-BushTypeConst} Let $H$ be a dephased $\BH(n,\ell)$. Let $r_i$ be the $i$-th row of $H$, and let $E_i=r_i^*r_i$ be the rank-$1$ projection matrix onto the subspace spanned by $r_i$. Then the block-circulant matrix $M=[E_{i-j}]_{ij}$, i.e.
\begin{equation}
M=\begin{bmatrix}
E_0 & E_1& E_2 & \dots &E_{n-1}\\
E_{n-1} & E_0 & E_1 & \dots & E_{n-2}\\
E_{n-2} & E_{n-1} & E_0 & \dots & E_{n-3}\\
\vdots & \vdots & \vdots & \ddots & \vdots\\
E_{1} & E_2 & E_3 & \dots & E_0
\end{bmatrix},
\end{equation}

 is a Bush-type $\BH(n^2,\ell)$ matrix.
\end{proposition}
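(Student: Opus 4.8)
The plan is to verify, one by one, the three defining features of a Bush-type $\BH(n^2,\ell)$ matrix: that $M$ has all entries in $\mu_\ell$; that the diagonal $n\times n$ blocks of $M$ equal $J_n$ while every off-diagonal block has all of its row sums equal to $0$; and that $MM^*=n^2 I_{n^2}$. Throughout I would write $r_0,\dots,r_{n-1}$ for the rows of $H$, so that $H\in\BH(n,\ell)$ translates into the orthogonality relations $r_k r_m^*=n\delta_{km}$, and the assumption that $H$ is dephased gives $r_0=\mathbf 1_n^{\intercal}$. The workhorse is the rank-one Hermitian matrix $E_k=r_k^* r_k$, together with two identities: first, $E_k E_m=r_k^*(r_k r_m^*)r_m=n\delta_{km}E_k$, which is just orthogonality of the rows; and second, $\sum_{k=0}^{n-1}E_k=H^*H=nI_n$, since $HH^*=nI_n$ forces $H^*H=nI_n$.

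The first two properties are then immediate. For the entries: the $(a,b)$ entry of $E_k$ equals $\overline{(r_k)_a}\,(r_k)_b$, a product of elements of $\mu_\ell$, and $\mu_\ell$ is closed under conjugation and multiplication; hence every entry of $M$ lies in $\mu_\ell$. For the block structure: the diagonal blocks of $M$ are $E_{i-i}=E_0=r_0^*r_0=\mathbf 1_n\mathbf 1_n^{\intercal}=J_n$; and for $i\ne j$ (so $i-j\not\equiv 0\bmod n$) the off-diagonal block is $E_{i-j}$, and $E_k J_n=r_k^*(r_k\mathbf 1_n)\mathbf 1_n^{\intercal}=r_k^*(r_k r_0^*)\mathbf 1_n^{\intercal}=0$ for $k\not\equiv 0$ by orthogonality, so all row sums of an off-diagonal block vanish.

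The substantive step is $MM^*=n^2 I_{n^2}$. Since each $E_k$ is Hermitian, the $(i,j)$ block of $M^*$ is $(E_{j-i})^*=E_{j-i}$, so the $(i,j)$ block of $MM^*$ is $\sum_{k=0}^{n-1}E_{i-k}E_{j-k}$; substituting $t=i-k$ (indices in $\Z/n$) rewrites this as $\sum_{t}E_t E_{t-(i-j)}$. By the first identity $E_t E_{t-(i-j)}=n\delta_{i-j,0}E_t$, so the $(i,j)$ block equals $n\delta_{i-j,0}\sum_t E_t=n^2\delta_{i-j,0}I_n$ by the second identity. Thus $MM^*=n^2 I_{n^2}$, which together with the entry condition says $M\in\BH(n^2,\ell)$, and combined with the block structure already established shows $M$ is Bush-type.

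I do not expect a genuine obstacle here; the only point demanding care is the index bookkeeping in the block product, namely keeping track of the cyclic shifts (that $M^*$ is again a block matrix whose blocks are of the form $E_m$, thanks to $E_k^*=E_k$) so that the double sum collapses cleanly via $E_tE_m=n\delta_{tm}E_t$. Everything else follows directly from the dephased normalization $r_0=\mathbf 1_n^{\intercal}$ and the orthogonality relations of $H$.
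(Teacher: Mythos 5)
Your proof is correct and complete; the paper itself gives no argument for this proposition (it only cites Kharaghani), and what you supply is precisely the standard verification: the two identities $E_kE_m=n\delta_{km}E_k$ and $\sum_k E_k=H^*H=nI_n$, the dephasing giving $E_0=J_n$ and $E_kJ_n=0$ for $k\not\equiv 0$, and the collapse of the block product. The only cosmetic point is that the statement's indexing $M=[E_{i-j}]_{ij}$ disagrees with the displayed array (which is $[E_{j-i}]_{ij}$), but as you note the argument is insensitive to this choice.
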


Combining the results above we obtain the following determinantal lower bound:
\begin{theorem}\label{thm-BushLowerBound}
If there is a $\BH(n,\ell)$, then there is a matrix $M$ of order $n^2+1$ with entries in the $\ell$-th roots of unity such that
\begin{equation}|\det(M)|\geq (n+1)n^{n^2}.
\end{equation}
\end{theorem}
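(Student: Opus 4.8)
The plan is to chain together the two constructions just developed: first convert the hypothesized $\BH(n,\ell)$ into a Bush-type $\BH(n^2,\ell)$ via Proposition~\ref{thm-BushTypeConst}, then border that matrix and apply Proposition~\ref{prop-ConstRSBoundComp}.

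Since Proposition~\ref{thm-BushTypeConst} requires a \emph{dephased} $\BH(n,\ell)$, I would first note that every $\BH(n,\ell)$ is monomially equivalent to a dephased one (scale rows and columns by elements of $\mu_{\ell}$ so that the first row and column become all ones), and this operation keeps all entries in $\mu_{\ell}$. So we may start from a dephased $H\in\BH(n,\ell)$ and let $K=[E_{i-j}]_{ij}\in\BH(n^2,\ell)$ be the block-circulant Bush-type matrix produced by Proposition~\ref{thm-BushTypeConst}, where $E_i=r_i^*r_i$ is the rank-one projection onto the $i$-th row $r_i$ of $H$.

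The point I would then verify is that $K$ has constant row sum. Because $H$ is dephased, $r_0=\mathbf{1}_n^{\intercal}$, so $E_0=J_n$, while for $i\neq 0$ the row $r_i$ is orthogonal to $\mathbf{1}_n$, whence $E_iJ_n=r_i^*(r_i\mathbf{1}_n)\mathbf{1}_n^{\intercal}=0$. Each block-row of $K$ contains exactly one copy of $E_0$ and $n-1$ blocks $E_i$ with $i\neq 0$, so $KJ_{n^2}=n\,J_{n^2}$; that is, $K$ is a complex Hadamard matrix of order $n^2$ with constant row sum $n$. Now Proposition~\ref{prop-ConstRSBoundComp}, applied with its ``$n$'' taken to be $n^2$, yields a bordered matrix $M$ of order $n^2+1$ with entries in $\mu_{\ell}$ (the border consists of the entry $1\in\mu_{\ell}$ together with all-ones vectors, and $K$ has entries in $\mu_{\ell}$) satisfying
\[
|\det M|\ \geq\ \bigl(\sqrt{n^2}+1\bigr)(n^2)^{n^2/2}\ =\ (n+1)\,n^{n^2},
\]
which is the claimed bound.

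I do not expect a genuine obstacle here: the argument is a bookkeeping composition of Propositions~\ref{thm-BushTypeConst} and~\ref{prop-ConstRSBoundComp}. The one step deserving care is confirming that the Bush-type matrix's row sum is exactly $n=\sqrt{n^2}$, since it is precisely this that makes the factor $\sqrt{N}+1$ in Proposition~\ref{prop-ConstRSBoundComp} (with $N=n^2$) collapse to $n+1$.
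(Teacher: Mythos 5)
Your proposal is correct and follows exactly the paper's route: build a Bush-type $\BH(n^2,\ell)$ from the given $\BH(n,\ell)$ via Proposition~\ref{thm-BushTypeConst}, observe it has constant row sum $n$, and apply Proposition~\ref{prop-ConstRSBoundComp} with order $n^2$. The extra details you supply (dephasing, and the computation $E_0=J_n$, $E_iJ_n=0$ for $i\neq 0$) are exactly what the paper leaves implicit in its definition of Bush-type matrices.
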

\begin{proof}
Let $H_0$ be a $\BH(n,\ell)$, Theorem \ref{thm-BushTypeConst} gives the existence of a Bush-type $\BH(n^2,\ell)$, say $H$. Apply Proposition \ref{prop-ConstRSBoundComp} to $H$ to obtain a matrix $M$ of order $n^2+1$ satisfying the lower bound in the statement.\qedhere
\end{proof}
\begin{corollary}\normalfont \label{cor-BushLowerBound}For all integers $\ell$, $t\geq 1$, 
\begin{equation}\gamma_{\ell}(\ell^{2t}+1)\geq (\ell^{t}+1)\ell^{t\ell^{2t}}. 
\end{equation}
\end{corollary}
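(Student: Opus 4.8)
The plan is to specialize Theorem \ref{thm-BushLowerBound} to $n = \ell^{t}$, so the only ingredient I need to supply is a $\BH(\ell^{t},\ell)$ matrix for every $t \geq 1$. I would obtain one from the Fourier matrix $F_{\ell}$ (the character table of $\Z/\ell\Z$), which is a $\BH(\ell,\ell)$: since the Kronecker product of a $\BH(n_1,\ell)$ with a $\BH(n_2,\ell)$ is again Butson of order $n_1 n_2$ over $\mu_{\ell}$ — Kronecker products preserve the complex Hadamard property and $\mu_{\ell}$ is multiplicatively closed — the $t$-fold Kronecker power $F_{\ell}^{\otimes t}$ is a $\BH(\ell^{t},\ell)$. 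Equivalently, one may invoke the fact recorded in the introduction that the character table of an abelian group of order $n$ and exponent $\ell$ is a $\BH(n,\ell)$, applied to the elementary abelian group $(\Z/\ell\Z)^{t}$.

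Next I would feed $H_0 := F_{\ell}^{\otimes t}$ into Theorem \ref{thm-BushLowerBound}. This produces a matrix $M$ of order $(\ell^{t})^{2} + 1 = \ell^{2t}+1$ with entries in $\mu_{\ell}$ such that $|\det M| \geq (\ell^{t}+1)(\ell^{t})^{(\ell^{t})^{2}}$. Rewriting the exponent, $(\ell^{t})^{(\ell^{t})^{2}} = \ell^{\,t\cdot \ell^{2t}}$, so $M$ certifies $\gamma_{\ell}(\ell^{2t}+1) \geq (\ell^{t}+1)\ell^{t\ell^{2t}}$, which is exactly the asserted bound.

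I do not expect any genuine obstacle: the corollary is just the case $n = \ell^{t}$ of Theorem \ref{thm-BushLowerBound} together with the (standard) existence of $\BH(\ell^{t},\ell)$ and one line of exponent bookkeeping. The only caveat worth flagging is the degenerate value $\ell = 1$, where $\mu_1 = \{1\}$ forces $\gamma_1(m) = 0$ for $m \geq 2$; one should read the statement with $\ell \geq 2$, and for $\ell \geq 2$ the argument above applies verbatim.
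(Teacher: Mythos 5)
Your argument is exactly the paper's: take the $t$-fold Kronecker power of the Fourier matrix $F_{\ell}$ to get a $\BH(\ell^{t},\ell)$, then apply Theorem \ref{thm-BushLowerBound} with $n=\ell^{t}$ and simplify the exponent. The remark about the degenerate case $\ell=1$ is a reasonable extra caveat but does not change the substance.
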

\begin{proof}
The Fourier matrix $F_{\ell}$ is a $\BH(\ell,\ell)$ matrix. Taking the $t$-fold tensor product of $F_{\ell}$ with itself we obtain a $\BH(\ell^t,\ell)$ for all $t\geq 1$. Then, Theorem \ref{thm-BushLowerBound} with $n=\ell^t$ yields the result.\qedhere
\end{proof}
Notice that 
\begin{equation}
\lim_{t}\frac{\gamma_{\ell}(\ell^{2t}+1)}{h(\ell^{2t}+1)}\geq\lim_{t}\frac{(\ell^t+1)\ell^{t\ell^{2t}}}{(\ell^{2t}+1)^{(\ell^{2t}+1)/2}}=\frac{1}{\sqrt{e}},
\end{equation}
so our construction achieves asymptotically at least 60\% of the Hadamard bound. In the cases $\ell=2,3,4$, the ratio of our determinantal lower bound to the Barba bound in Corollary \ref{cor-Barba3} is $1/\sqrt{2}$, which is a fraction of approximately 70\%.

%%%%%%%%%%%%%%%%%%%%%%%%%%%
% THIRD ROOTS LOWER BOUND %
%%%%%%%%%%%%%%%%%%%%%%%%%%%
\section{Lower bounds for matrices over the third roots}\label{sec-3Roots}
The existence of a $\BH(n,3)$ implies that $3\mid n$. So, over the third roots of unity, the lower bound in Theorem \ref{thm-BushLowerBound} applies only at orders $\equiv 1\pmod{3}$. In this section, we give an additional family of large determinant matrices over $\mu_3$ at orders $q+1\equiv 2\pmod{3}$, where $q$ is a prime power.

\begin{definition}
    Let $\ell>1$ be an integer, and $q$ be a prime power. Suppose that $q\equiv 1\pmod{\ell}$, and let $\chi$ be a  character of $(\F_{q}^{\times},\cdot)$ of order $\ell$. Extend $\chi$ to $\F_q$ by letting $\chi(0)=0$. We define the \textit{generalized Paley core} $Q_{q}$ as
    \begin{equation}
        (Q_{q})_{ij} = \chi(i-j),
    \end{equation}
    for every $i,j\in\F_q$
\end{definition}
The entries of $Q_q$ are $\ell$-th roots of unity, and $Q_q$ satisfies the following matrix identities:
\begin{itemize}
    \item [(i)] $Q_qQ_q^*=qI_{q}-J_q,$ and
    \item [(ii)] $Q_qJ_q=\mathbf{0}$.
\end{itemize}

Notice that the usual Paley core \cite{Horadam-HadamardBook}, corresponds to the case $\ell=2$. The extended matrix
\begin{equation}\label{eq-WeighingMatrix}
    W_q = \begin{bmatrix}
    0 & \mathbf{1}_{q}^{\intercal}\\
    \mathbf{1}_{q}^{\intercal} & Q_q
    \end{bmatrix},
\end{equation}
satisfies $W_qW_q^*=qI_{q+1}$, i.e. it is a \textit{generalized weighing matrix} $\GW_{\ell}(q+1,q)$ (see Theorem 5.2.13. \cite{Ponasso-Thesis}). The diagonal of $W_q$ is equal to the zero vector, so it cannot directly yield a lower bound for the determinant of a matrix over the $\ell$-th roots.  We can, however, compute the determinant of $W_q+\alpha I_q$, where $\alpha\in\mu_{\ell}$, using the theory of \textit{cyclotomy}, \cite{Hall-CombinatorialTheoryBook, Storer-CyclotomyBook}. This will, in turn, give us a determinantal lower bound at orders $q+1\equiv 2\pmod{3}.$

\begin{definition}\normalfont Let $q$ be a prime power, and let $\ell$ and $f$ be integers such that $q=\ell f+1$. The \textit{$\ell$-th cyclotomic classes} are the cosets of the subgroup $H_0$ of $\ell$-th powers in $\F_q^{\times}$. In other words, the cyclotomic classes are the sets 
\begin{equation}
H_i=\{\gamma^{\ell a+i}: a\in \{0,1\dots,f-1\}\},
\end{equation}
for $0\leq i\leq \ell-1$, where $\gamma$ is a primitive element of $\F_q$.
\end{definition} 

\begin{definition}\normalfont Over the field $\F_q$, for $q$ a prime power, the \textit{$\ell$-th cyclotomic number} $(i,j)$ is defined as the number of elements $x_i\in H_i$ such that
\begin{equation}
x_i+1\in H_j.
\end{equation}
Equivalently, the cyclotomic number $(i,j)$ is the number of solutions $(x,y)$ to the equation 
\begin{equation}
    \gamma^{\ell x+i}+1=\gamma^{\ell y+j}.
\end{equation}
\end{definition}

The cyclotomic numbers satisfy the following elementary relations:
\begin{lemma}[cf. Part 1, Lemma 3 \cite{Storer-CyclotomyBook}]\label{lemma-CyclotomicNumsSymmetry} \normalfont Let $(i,j)$ be the $\ell$-th cyclotomic number, where $q=\ell f+1$. Then
\begin{enumerate}
\item $(i+a\ell,j+b\ell)=(i,j)$ for any integers $a,b$,
\item $(i,j)=(\ell-i,j-i)$,
\item $(i,j)=(j,i)$ if $f$ is even, and $(i,j)=(j+\ell/2,i+\ell/2)$ if $f$ is odd,
\item
\[\sum_{j=0}^{\ell-1}(i,j)=
\begin{cases}
f-1 & \text{ if } f\text{ is even, and } i=0\\
f-1 & \text{ if } f\text{ is odd, and } i=\ell/2\\
f & \text{ otherwise }
\end{cases}
\]
\item
\[\sum_{i=0}^{\ell-1}(i,j)=
\begin{cases}
f-1 & \text{ if } j=0\\
f & \text{ otherwise }
\end{cases}
\]
\end{enumerate}
\end{lemma}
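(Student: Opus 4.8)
The plan is to prove Lemma~\ref{lemma-CyclotomicNumsSymmetry} by repeatedly exploiting the defining identity $\gamma^{\ell x+i}+1=\gamma^{\ell y+j}$, tracking how multiplicative and additive manipulations permute the cyclotomic classes. For part~(1), the congruence $i+a\ell\equiv i\pmod{\ell}$ means $H_{i+a\ell}=H_i$ as sets, so the counts are literally the same; nothing to do beyond recording that indices are taken mod~$\ell$. For part~(2), I would start from a solution $x_i\in H_i$ with $x_i+1\in H_j$ and divide through by $x_i$: writing $x_i=\gamma^{\ell a+i}$ we get $x_i^{-1}=\gamma^{-\ell a-i}\in H_{-i}=H_{\ell-i}$, and $x_i^{-1}(x_i+1)=1+x_i^{-1}$, with $x_i^{-1}(x_i+1)=\gamma^{\ell(b-a)+(j-i)}\in H_{j-i}$. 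So $x_i\mapsto x_i^{-1}$ is a bijection between the solution set counted by $(i,j)$ and that counted by $(\ell-i,j-i)$ (one should note $x_i=-1$ is impossible when it would force $x_i+1=0\notin H_j$, so all relevant elements are invertible and nonzero).

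For part~(3), the key is to analyze the substitution $x_i\mapsto -x_i$ together with negation of the equation: if $x_i+1\in H_j$, set $y=-(x_i+1)=\gamma^{\ell b+j}\cdot(-1)$. Here $-1=\gamma^{(q-1)/2}=\gamma^{\ell f/2}$; when $f$ is even, $-1\in H_0$, so $-x_i\in H_i$ and $-(x_i+1)\in H_j$, and rewriting $-(x_i+1)+1=-x_i$ exhibits a bijection showing $(i,j)=(j,i)$. When $f$ is odd, $-1=\gamma^{\ell f/2}$ is not a perfect $\ell$-th power but lies in $H_{\ell/2}$ (so $\ell$ is even, which is forced when $f$ is odd and $q$ odd), and the same manipulation sends a solution counted by $(i,j)$ to one counted by $(j+\ell/2,i+\ell/2)$. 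For parts~(4) and~(5), I would fix $i$ and sum over $j$: $\sum_j(i,j)$ counts the pairs $(x,z)$ with $x\in H_i$ and $x+1=z\in\F_q^\times$, i.e.\ it is the number of $x\in H_i$ with $x+1\neq 0$; since $|H_i|=f$, this equals $f$ unless $-1\in H_i$, in which case it is $f-1$, and $-1\in H_i$ exactly when $i\equiv\ell f/2\pmod\ell$, i.e.\ $i=0$ if $f$ even and $i=\ell/2$ if $f$ odd. Part~(5) is the dual count: $\sum_i(i,j)$ is the number of $z\in H_j$ with $z-1\in\F_q^\times$, which is $f$ unless $1\in H_j$, i.e.\ $j=0$, in which case it is $f-1$.

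The main obstacle — really the only subtle point — is part~(3): one must be careful about the parity bookkeeping, namely that $-1=\gamma^{(q-1)/2}$ and that $(q-1)/2=\ell f/2$ lands in $H_0$ versus $H_{\ell/2}$ according to the parity of $f$, and that $f$ odd forces $\ell$ even (else $q=\ell f+1$ would be even, contradicting $q$ a prime power greater than~$2$; the degenerate case $q=2$ has no $\ell>1$ with $\ell\mid q-1$). Everything else reduces to exhibiting an explicit bijection on solution sets via a monomial substitution (inversion, negation, or both) and reading off the resulting class indices mod~$\ell$. I would present each part as a one- or two-line verification of the relevant bijection, citing \cite{Storer-CyclotomyBook} for the classical origin of these identities.
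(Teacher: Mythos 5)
The paper offers no proof of this lemma at all---it is quoted directly from Storer with a ``cf.'' citation---so there is nothing to compare against except the classical argument, which is exactly what you reconstruct. Your bijections are the right ones and they all check out: inversion $x\mapsto x^{-1}$ for part (2) (sending a solution of type $(i,j)$ to one of type $(-i,\,j-i)$ since $x^{-1}+1=x^{-1}(x+1)$), the involution $x\mapsto -(x+1)$ for part (3), and the two ``complete the sum over one index'' counts for parts (4) and (5), with the corrections $-1\in H_i$ and $1\in H_j$ accounting for the $f-1$ cases. Parts (1) and (2) and (4)--(5) are airtight as sketched.

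The one concrete misstep is in your parity bookkeeping for part (3): the claim that ``$f$ odd forces $\ell$ even, else $q=\ell f+1$ would be even, contradicting $q$ a prime power greater than $2$'' is false, because even prime powers $q=2^k>2$ exist. Indeed the paper itself invokes the $\ell=3$ theory at $q=4$, where $f=1$ and $\ell=3$ are both odd and $\ell/2$ is meaningless. What actually happens is that the trichotomy in parts (3) and (4) is inherited from Storer's odd-prime setting: for $q$ odd, $q-1=\ell f$ is even, so $f$ odd does force $\ell$ even and $-1=\gamma^{\ell f/2}\in H_{\ell/2}$ as you say; but in characteristic $2$ one has $-1=1\in H_0$ regardless of the parity of $f$, so the ``$f$ even'' conclusions $(i,j)=(j,i)$ and $\sum_j(0,j)=f-1$ are the ones that hold. (This is also why the paper's later computation $\sum_j(0,j)=f-1$ is valid for all $q\equiv 1\pmod 3$, including $q=4$.) Your proof should either restrict the $f$-odd branch to odd $q$ or add the characteristic-$2$ case explicitly; with that repair the argument is complete and is the standard one.
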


The following Lemma characterizes \textit{cubic cyclotomy}, i.e. the case $\ell=3$, in terms of $q$.
\begin{theorem}[Cubic cyclotomic numbers, cf. Part 1, Lemma 7 \cite{Storer-CyclotomyBook}]\label{thm-CubicCyclotomy}
The cyclotomic numbers are given by
\begin{equation}
\begin{bmatrix}
    (0,0) & (0,1) & (0,2)\\
    (1,0) & (1,1) & (1,2)\\
    (2,0) & (2,1) & (2,2)
\end{bmatrix}=
\begin{bmatrix}
    A & B & C\\
    B & C & D\\
    C & D & B
\end{bmatrix},
\end{equation}
where 
\begin{equation}
\begin{split}
9A &= q-8+c\\
18B&=2q-4-c-9d\\
18C&=2q-4-c+9d\\
9D &= q+1+c,
\end{split}
\end{equation}
and $c$ is uniquely determined by the conditions $4q=c^2+27d^2$, and $c\equiv 1\pmod{3}$.
\end{theorem}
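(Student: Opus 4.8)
The plan is to reduce the nine cyclotomic numbers to the four values $A,B,C,D$ using the elementary symmetry relations, and then to determine those four numbers by a discrete Fourier expansion over the quotient $\F_q^{\times}/H_0\cong\Z/3\Z$, whose only non-elementary ingredient is Gauss's classical evaluation of the cubic Jacobi sum. For the reduction I would first note that any character $\chi$ of $\F_q^{\times}$ of exact order $3$ satisfies $\chi(-1)=1$, since $\chi(-1)^2=\chi(1)=1$ and the only element of $\mu_3$ squaring to $1$ is $1$; hence the involution $x\mapsto -x-1$ is a bijection between the set counted by $(i,j)$ and the set counted by $(j,i)$, so $(i,j)=(j,i)$ for all $i,j$. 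Combined with Lemma~\ref{lemma-CyclotomicNumsSymmetry}(2) — which gives $(1,0)=(2,2)$, $(1,1)=(2,0)$, $(1,2)=(2,1)$ — this forces the array to have the stated shape, with $A=(0,0)$, $B=(0,1)$, $C=(0,2)$, $D=(1,2)$. Parts (4) and (5) of that lemma then give $A+B+C=f-1$ and $B+C+D=f$, hence $D=A+1$; but these linear relations still leave two degrees of freedom, so a genuinely arithmetic input is needed.

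Next I would fix a primitive element $\gamma$ of $\F_q$ and take $\chi$ to be the cubic character with $\chi(\gamma)=\omega$ and $\chi(0)=0$. Writing the indicator of the coset $H_i$ as $\tfrac13\sum_{a=0}^{2}\omega^{-ai}\chi^{a}(x)$ for $x\neq0$, one obtains
\[
9\,(i,j)=\sum_{a,b=0}^{2}\omega^{-ai-bj}\,S_{a,b},\qquad S_{a,b}=\sum_{x\in\F_q}\chi^{a}(x)\,\chi^{b}(x+1).
\]
The sums $S_{a,b}$ are elementary to evaluate: $S_{0,0}=q-2$; if exactly one of $a,b$ is $0$ then $S_{a,b}=-1$; and if $a,b\neq0$ then, after the substitution $x\mapsto -x$ and using $\chi(-1)=1$, one has $S_{a,b}=J(\chi^{a},\chi^{b})$, which equals $-1$ when $a+b\equiv0\pmod3$ (the degenerate Jacobi sum $J(\chi,\bar\chi)$) and equals $\pi:=J(\chi,\chi)$, resp.\ $\bar\pi$, when $(a,b)=(1,1)$, resp.\ $(2,2)$. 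Substituting these values and collapsing the resulting geometric sums of cube roots of unity yields
\[
9A=q-8+(\pi+\bar\pi),\quad 9D=q+1+(\pi+\bar\pi),\quad 9B=q-2+(\omega^{2}\pi+\omega\bar\pi),\quad 9C=q-2+(\omega\pi+\omega^{2}\bar\pi).
\]

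Finally I would invoke the classical facts about the cubic Jacobi sum: $\pi\in\Z[\omega]$, $\pi\bar\pi=q$, and $\pi\equiv -1\pmod 3$. Writing $\pi=a+b\omega$, the last congruence forces $a\equiv -1$ and $b\equiv 0\pmod 3$, so I would set $b=-3d$ and $c=\pi+\bar\pi=2a-b$. Then $c\equiv 1\pmod 3$ and $4q=4(a^{2}-ab+b^{2})=(2a-b)^{2}+3b^{2}=c^{2}+27d^{2}$, which is exactly the representation in the statement. Rewriting $\omega^{2}\pi+\omega\bar\pi=2b-a=-(c+9d)/2$ and $\omega\pi+\omega^{2}\bar\pi=-(a+b)=(9d-c)/2$ and substituting into the four identities above gives precisely $9A=q-8+c$, $9D=q+1+c$, $18B=2q-4-c-9d$, and $18C=2q-4-c+9d$. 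The sign of $d$ is not intrinsic: changing the primitive element $\gamma$ (equivalently, replacing $\chi$ by $\bar\chi$) permutes the classes $H_1,H_2$, which replaces $d$ by $-d$ and interchanges $B$ and $C$, whereas $c$ is unchanged.

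The main obstacle is precisely this last step, namely Gauss's determination of $\pi=J(\chi,\chi)$, equivalently the count of the affine $\F_q$-points of $x^{3}+y^{3}=1$. The identities $\pi\in\Z[\omega]$ and $\pi\bar\pi=q$ are routine, but the delicate point is the normalization $\pi\equiv -1\pmod 3$: it singles out $\pi$ among its six associates in $\Z[\omega]$ and is exactly what makes $4q=c^{2}+27d^{2}$ well-defined with $c\equiv 1\pmod 3$. For $q$ a prime this is classical; for a prime power one reduces to the prime case via the Hasse--Davenport relation, or simply cites the general statement (see, e.g., \cite{Storer-CyclotomyBook}). Everything else in the argument is bookkeeping.
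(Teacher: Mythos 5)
Your argument is correct, and it is worth noting that the paper does not prove this theorem at all: it is quoted verbatim from Storer (Part 1, Lemma 7) as an external input, so any proof you give is by definition a different route. What you supply is the standard character-sum derivation: the reduction of the nine numbers to $A,B,C,D$ via $\chi(-1)=1$ and Lemma \ref{lemma-CyclotomicNumsSymmetry}(2) is right (and consistent with part (3) of that lemma, since $f$ is even whenever $q$ is odd, and $-1=1$ when $q$ is even); the Fourier inversion $9(i,j)=\sum_{a,b}\omega^{-ai-bj}S_{a,b}$ and the evaluations $S_{0,0}=q-2$, $S_{a,0}=S_{0,b}=-1$, $S_{a,b}=J(\chi^a,\chi^b)$ all check out, as does the bookkeeping that converts $\pi=a+b\omega$, $\pi\equiv-1\pmod 3$ into $c=2a-b\equiv 1\pmod 3$, $d=-b/3$, $4q=c^2+27d^2$, and the four stated formulas (I verified, e.g., $9(1,2)=q-2+3+\pi+\bar\pi=q+1+c$ and $9(0,1)=q-2+(2b-a)$). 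You also correctly isolate the one genuinely arithmetic ingredient, the normalization $J(\chi,\chi)\equiv-1\pmod 3$, which is exactly what pins down $c$ among the representations $4q=c^2+27d^2$; your remark that the sign of $d$ is not intrinsic matches the paper's own Remark following the theorem. One small caveat on your closing sentence: for prime powers $q=p^s$ with $p\equiv 2\pmod 3$ (e.g.\ $q=4$), the Hasse--Davenport reduction to the prime case is not available, since $\F_p^\times$ carries no cubic character to lift; there the congruence $\pi\equiv-1\pmod 3$ must be proved directly (it still holds, e.g.\ $\pi=2$ for $q=4$) or cited in the general prime-power form, which is what Storer provides and what you already allow for. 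So this is a complete proof of a statement the paper leaves as a citation; what it buys is self-containedness at the cost of importing the classical theory of cubic Jacobi sums, whereas the paper's choice to cite Storer keeps the cyclotomy machinery as a black box.
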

\begin{remark}
The cubic cyclotomic numbers $A=(0,0)$ and $D=(1,2)=(2,1)$ are uniquely determined, but $B$ and $C$ are interchanged according to the sign of $d$. We do not need to resolve this indeterminacy, since our results will be independent of the value of $d$.
\end{remark}
\begin{lemma}[cf. Part 1, Section 3 \cite{Storer-CyclotomyBook}]\normalfont \label{lemma-TripleSum} Let $q=3f+1$ be a prime power, and $H_i$ be the cubic cyclotomic classes in $\F_q$. Let $N$ be the number of solutions to the equation
\begin{equation}
    1+x_0+x_1+x_2=0,
\end{equation}
where $x_i\in H_i$; for $i=0,1,2$. Then, $N$ satisfies
\begin{equation}
N=AD+B^2+C^2=BC+BD+CD=\frac{1}{27}(q^2-3q-c).
\end{equation}
\end{lemma}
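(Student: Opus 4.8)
The plan is to count the solutions to $1+x_0+x_1+x_2=0$ with $x_i\in H_i$ in two different ways, both reducing to sums of cubic cyclotomic numbers, and then to substitute the explicit values from Theorem~\ref{thm-CubicCyclotomy}. First I would rewrite the defining equation as $x_1+x_2 = -(1+x_0)$. For each fixed $x_0\in H_0$, I want to count pairs $(x_1,x_2)\in H_1\times H_2$ summing to the prescribed value $-(1+x_0)$; dividing through by a suitable element to normalize puts this in the form ``$y_1 + y_2 = 1$'' and the count becomes a cyclotomic number once we track which class $-(1+x_0)$ lies in. The subtlety is that $-1$ itself lies in a definite cyclotomic class (depending on the parity of $f$ and the residue of $(q-1)/2$ modulo $3$), so I would need to determine the class of $-1$; since $q=3f+1$, one checks $-1\in H_0$ when $f$ is even and $-1\in H_{3/2}$-type considerations are vacuous here ($\ell=3$ is odd, so $-1$ is always a cube, i.e. $-1\in H_0$, because $q$ is odd means $2\mid q-1$ and we need the index of $-1$; in fact for $\ell$ odd, $-1=(-1)^{\ell}$ is always an $\ell$-th power, so $-1\in H_0$). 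Hence $1+x_0 \in H_0 + H_0$ up to the translation, and summing the relevant cyclotomic number over $x_0\in H_0$ gives $N$ as a sum $\sum_k (\text{something})\cdot(0,k)$-type expression.

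More concretely, the standard approach is: $N = \#\{(x_0,x_1,x_2): x_i\in H_i,\ 1+x_0+x_1+x_2=0\}$. Fix the value of $x_0$; then we need $x_1 + x_2 = -(1+x_0)$. If $1+x_0 = 0$ this forces $x_1 = -x_2$, contributing a correction term, but $-1\in H_0$ so $1+x_0=0$ would require $x_0=-1\in H_0$, which indeed happens for exactly one value; I would isolate that contribution. Otherwise $-(1+x_0)\in H_t$ for some $t$ determined by the cyclotomic number $(0,t')$ bookkeeping (the class of $1+x_0$ as $x_0$ ranges over $H_0$ is governed by $(0,\cdot)$), and the number of representations of a fixed element of $H_t$ as a sum of an element of $H_1$ and an element of $H_2$ is again a cyclotomic number by homogeneity. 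Carrying this through, $N$ becomes a bilinear expression in the entries of the cyclotomic matrix; using the symmetry relations in Lemma~\ref{lemma-CyclotomicNumsSymmetry} (items 1–3) this collapses to $N = AD + B^2 + C^2$. A parallel bookkeeping fixing $x_1$ (or $x_2$) first, together with relation~2 of Lemma~\ref{lemma-CyclotomicNumsSymmetry} which permutes $B,C,D$ cyclically in the relevant positions, yields the alternative form $N = BC + BD + CD$; equating the two is a consistency check but both are needed for the statement.

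Finally I would substitute $9A = q-8+c$, $9D = q+1+c$, $18B = 2q-4-c-9d$, $18C = 2q-4-c+9d$ from Theorem~\ref{thm-CubicCyclotomy} into $AD + B^2 + C^2$. Since $B+C = (2q-4-c)/9$ and $B-C = -d$, we have $B^2+C^2 = \tfrac12\big((B+C)^2 + (B-C)^2\big) = \tfrac12\big(\tfrac{(2q-4-c)^2}{81} + d^2\big)$, and $AD = \tfrac{1}{81}(q-8+c)(q+1+c)$; the $d^2$ term is eliminated using $27d^2 = 4q - c^2$ from the relation $4q = c^2 + 27d^2$. Expanding and collecting, everything should telescope to $\tfrac{1}{27}(q^2 - 3q - c)$. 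The main obstacle is the first part: correctly identifying which cyclotomic classes arise (in particular the class of $-1$ and the class-bookkeeping as $x_0$ ranges over $H_0$), and handling the degenerate term where $1+x_0=0$ so that the cyclotomic-number interpretation is exact; once the combinatorial identity $N = AD+B^2+C^2 = BC+BD+CD$ is established, the closed-form evaluation is a routine (if slightly lengthy) algebraic manipulation using $4q = c^2+27d^2$.
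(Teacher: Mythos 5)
Your outline is correct, and since the paper does not prove this lemma at all (it is quoted from Storer, Part~1, Section~3), there is no in-paper argument to measure it against; what you describe is the standard derivation and it does go through. Two points are worth nailing down rather than leaving as ``bookkeeping.'' First, the degenerate case $1+x_0=0$ contributes nothing: it forces $x_1=-x_2$, and since $-1\in H_0$ (as you correctly conclude, $-1=(-1)^3$ is always a cube) we get $-x_2\in H_2$, which is disjoint from $H_1$; similarly $-1\notin H_1,H_2$ kills the degenerate cases in the parallel count, so no correction term survives. Second, the bilinear expression can be made completely explicit: the number of representations of a fixed $z\in H_t$ as $u+v$ with $u\in H_i$, $v\in H_j$ is $(j-t,\,i-t)$ (divide by $z$, substitute $w=-v/z$, and use $-1\in H_0$), so fixing $x_0$ first gives $N=\sum_{t}(0,t)\,(2-t,1-t)=AD+B^2+C^2$, while fixing $x_1$ first gives $N=\sum_{t}(1,t)\,(2-t,-t)=BC+CD+DB$, using periodicity mod $3$ and the matrix of Theorem~\ref{thm-CubicCyclotomy}. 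Your closing algebra is also sound: $162(AD+B^2+C^2)=2(q-8+c)(q+1+c)+(2q-4-c)^2+3(4q-c^2)=6q^2-18q-6c$, which yields $(q^2-3q-c)/27$ as claimed, and a sanity check at $q=7$ (where $c=1$, $N=1$) confirms all three expressions agree.
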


The relationship between the generalized Paley core, its eigenvalues, and cyclotomy, can be established from a representation-theoretic point of view. For this we recall the following:

\begin{definition}
    Let $\Gamma=(\F_q,+)$, and $\C[\Gamma]$ be the group algebra of $\Gamma$, i.e. the $\C$-algebra generated by formal sums $\sum_{x\in \Gamma}\alpha_x[x]$ where each $\alpha_x\in \C$. For each $x,y\in \Gamma$, the product of  $[x]$ and $[y]$ in $G$ is defined as $[x]\cdot [y]:=[x+y]$. An element $\alpha\in \F_q^{\times}$ induces an automorphism of the group $\Gamma=(\F_q,+)$ by left multiplication: $x\mapsto \alpha x$, which induces an automorphism of $\C[\Gamma]$ written as $[x]^{\alpha}:=[\alpha x]$.
\end{definition}

Define 
\begin{equation}
K_i:=\sum_{x\in H_i}[x]=\sum_{a=0}^{f-1}[\gamma^{a\ell+i}]\in\C[\Gamma],
\end{equation}
for $0\leq i\leq \ell-1$. Let $\varrho$ be the regular representation of $\Gamma$, i.e. $\varrho(z)$ for $z\in\F_q$ is the $q\times q$ matrix indexed by elements $x,y$ in $\F_q$ such that
\begin{equation}
(\varrho(z))_{xy}=\begin{cases}
    1 & \text{ if } x+z = y\\
    0 & \text{ otherwise }
\end{cases}.
\end{equation}
Extending $\varrho$ to $\C[\Gamma]$ by linearity, the generalized Paley core can be written as
\begin{equation}
    Q_q = \varrho\left(\sum_{i=0}^{\ell-1}\zeta_q^{i}K_i\right).
\end{equation}

Linear relationships for the product of matrices $\varrho(K_i)$ and $\varrho(K_j)$ can be computed using the cyclotomic numbers:

\begin{proposition}[cf. \cite{Munemasa-Cyclotomic}]\normalfont \label{prop-CycloConstants} Let $H_r$ be the unique $\ell$-th cyclotomic class containing the element $-1$. The product in $\C[\Gamma]$ of $K_i$ and $K_j$ is 
\begin{equation} K_iK_j=f\delta_{i',j}[0]+\sum_{k=0}^{\ell-1}(j-i,k-i)K_k,
\end{equation}
where $i'=i+r$.
\end{proposition}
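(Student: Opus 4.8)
The plan is to compute $K_iK_j$ directly from its definition and read off the coordinates against the basis $\{[z]:z\in\F_q\}$ of $\C[\Gamma]$. Expanding, $K_iK_j=\sum_{x\in H_i}\sum_{y\in H_j}[x+y]=\sum_{z\in\F_q}c_z[z]$, where $c_z$ is the number of pairs $(x,y)\in H_i\times H_j$ with $x+y=z$. Since every cyclotomic class lies in $\F_q^{\times}$, it is natural to isolate the coefficient of $[0]$ from the coefficients of $[z]$ with $z\neq 0$. Throughout, all class indices are read modulo $\ell$, so that a cyclotomic number such as $(j-i,k-i)$ is well defined by the periodicity in Lemma \ref{lemma-CyclotomicNumsSymmetry}(1), and I will use repeatedly that $H_0$ is the subgroup of $\ell$-th powers, so products of classes add indices: $H_a\cdot H_b=H_{a+b}$.

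For the coefficient of $[0]$: a pair contributes exactly when $y=-x$ with $x\in H_i$. As $-1\in H_r$, we have $-x\in H_{i+r}=H_{i'}$, so $-x\in H_j$ holds for one $x\in H_i$ if and only if it holds for all of them, namely precisely when $i'\equiv j\pmod{\ell}$; in that case all $f$ elements of $H_i$ contribute. Hence $c_0=f\,\delta_{i',j}$.

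For $z\in H_k$ with $k$ fixed, the key step is the substitution $u=y/x$, which is legitimate because $x\neq 0$. I claim this yields a bijection between the set of pairs $(x,y)\in H_i\times H_j$ with $x+y=z$ and the set $\{u\in H_{j-i}:1+u\in H_{k-i}\}$. Indeed, from $y=ux$ together with $x\in H_i$, $y\in H_j$ one gets $u\in H_{j-i}$; from $z=x(1+u)$ together with $x\in H_i$, $z\in H_k$ one gets $1+u\in H_{k-i}$ (the degenerate possibility $u=-1$ is automatically excluded, as it would force $z=0$). Conversely, a valid $u$ determines $x=z/(1+u)$, which lies in $H_{k}\cdot H_{-(k-i)}=H_i$, and then $y=ux\in H_j$, uniquely, with $x+y=z$. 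Therefore $c_z$ equals the cyclotomic number $(j-i,k-i)$, a value independent of the particular $z\in H_k$. Summing over $z\in H_k$ collects these terms into $(j-i,k-i)K_k$, and adding the $z=0$ contribution gives $K_iK_j=f\,\delta_{i',j}[0]+\sum_{k=0}^{\ell-1}(j-i,k-i)K_k$, as claimed.

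The argument is essentially bookkeeping, and I expect no serious obstacle. The step most prone to error is checking that $(x,y)\mapsto u=y/x$ is a genuine bijection onto $\{u\in H_{j-i}:1+u\in H_{k-i}\}$ and not merely a surjection: one must verify that a given admissible $u$ recovers $x$ uniquely as $z/(1+u)$, that this $x$ actually lands in $H_i$ (where the coset arithmetic $H_k\cdot H_{-(k-i)}=H_i$ enters), and that the resulting $y=ux$ lands in $H_j$. Keeping every index reduced modulo $\ell$ and invoking the periodicity of cyclotomic numbers is the only other point that needs care.
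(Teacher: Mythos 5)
Your proof is correct: the direct expansion of $K_iK_j$, the identification $c_0=f\delta_{i',j}$ via $-1\in H_r$, and the substitution $u=y/x$ giving a bijection onto $\{u\in H_{j-i}:1+u\in H_{k-i}\}$ all check out, and the count matches the paper's definition of the cyclotomic number $(j-i,k-i)$ exactly. The paper itself omits the proof and defers to the cited reference, but your argument is precisely the standard computation that reference uses, so there is nothing further to compare.
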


This proposition shows that the matrices $A_i:=\varrho(K_i)$ are the adjacency matrices of the \textit{Bose-Mesner algebra} of an $\ell$-class \textit{association scheme} \cite{Bannai-Ito}, called the \textit{cyclotomic association scheme}. \\ 

The following is a straightforward fact, see also \cite{Babai-FourierAbelianGroups}:
\begin{lemma}\label{lemma-RegularRepDiagonalization}
  Let $\Gamma=\{g_0=1_{G},g_1,\dots,g_{n-1}\}$ be a finite abelian group of order $n$. Let $\{\chi_0,\chi_1,\dots,\chi_{n-1}\}$ be the irreducible characters of $\Gamma$, where $\chi_i:=\hat{g_i}$ is the image of $g_i$ under the isomorphism $\hat{(\cdot)}:\Gamma\rightarrow \hat{\Gamma}$ between $\Gamma$ and its dual. Let $X$ be the character table of $\Gamma$, indexed as $X_{ij}=\chi_i(g_j)$. Then, for every $g\in G$,
  \[\varrho(g)X= X\diag(\chi_0(g),\chi_1(g),\dots,\chi_{n-1}(g)).\]
\end{lemma}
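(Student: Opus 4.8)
The plan is to exhibit a common eigenbasis for all the permutation matrices $\varrho(g)$, $g\in\Gamma$, and then to recognise that this eigenbasis is precisely the set of columns of the character table $X$. Since $\Gamma$ is abelian, every irreducible representation is one‑dimensional and these irreducibles are exactly the characters $\chi_0,\dots,\chi_{n-1}$; the regular representation $\varrho$ decomposes as the direct sum of all of them, each with multiplicity one. Rather than quote this decomposition, I will verify the eigenrelations by hand: unwinding the definition of $\varrho$ (rows and columns indexed by the group elements, with $(\varrho(g))_{hh'}=1$ precisely when $hg=h'$), for each $j$ the vector $\mathbf v^{(j)}:=(\chi_j(g_0),\chi_j(g_1),\dots,\chi_j(g_{n-1}))^{\intercal}$ satisfies $(\varrho(g)\mathbf v^{(j)})_h=\mathbf v^{(j)}_{hg}=\chi_j(hg)=\chi_j(g)\,\chi_j(h)$, using that $\chi_j$ is a homomorphism. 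Hence $\varrho(g)\,\mathbf v^{(j)}=\chi_j(g)\,\mathbf v^{(j)}$ for every $g\in\Gamma$.

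It then remains to assemble these eigenrelations into the stated matrix identity. The one point requiring care is that the $k$‑th column of $X$, namely $(\chi_0(g_k),\dots,\chi_{n-1}(g_k))^{\intercal}$, should coincide with $\mathbf v^{(k)}=(\chi_k(g_0),\dots,\chi_k(g_{n-1}))^{\intercal}$; this is exactly the symmetry $X_{ik}=X_{ki}$ of the character table, i.e. $\hat{g_i}(g_k)=\hat{g_k}(g_i)$, which holds because the isomorphism $\hat{(\cdot)}\colon\Gamma\to\hat\Gamma$ here is self‑dual, equivalently the associated pairing $(g,h)\mapsto\hat g(h)$ is symmetric — in the application $\Gamma=(\F_q,+)$ this is immediate from $\operatorname{Tr}(ax)=\operatorname{Tr}(xa)$ for the characters $\hat a\colon x\mapsto\zeta_p^{\operatorname{Tr}(ax)}$. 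Granting this, the $k$‑th column of $\varrho(g)X$ is $\varrho(g)\,\mathbf v^{(k)}=\chi_k(g)\,\mathbf v^{(k)}$, which is also the $k$‑th column of $X\diag(\chi_0(g),\dots,\chi_{n-1}(g))$; comparing columns for every $k$ gives the identity.

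The argument is elementary from beginning to end, so there is no genuine obstacle: the only ``hard part'' is purely bookkeeping — keeping the indexing conventions for $\varrho(g)$ and for $X$ aligned, and making the symmetry of the character table explicit, without which the conclusion would instead read $\varrho(g)X^{\intercal}=X^{\intercal}\diag(\chi_0(g),\dots,\chi_{n-1}(g))$. One could alternatively phrase the result as a simultaneous diagonalisation $X^{-1}\varrho(g)X=\diag(\chi_0(g),\dots,\chi_{n-1}(g))$, using the orthogonality relations $XX^{*}=nI_n$ to invert $X$, but the column‑by‑column verification above does not even need invertibility of $X$.
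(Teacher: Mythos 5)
Your proof is correct. The paper does not actually prove this lemma --- it is stated as a ``straightforward fact'' with a pointer to the literature --- so there is no argument of the paper's to compare against; your column-by-column verification that each vector $\mathbf v^{(j)}=(\chi_j(g_0),\dots,\chi_j(g_{n-1}))^{\intercal}$ is a $\chi_j(g)$-eigenvector of $\varrho(g)$ is exactly the standard argument and is complete. You also correctly isolate the one genuine subtlety in the statement: with the indexing $X_{ij}=\chi_i(g_j)$, the columns of $X$ are the eigenvectors only if the pairing $(g,h)\mapsto\hat g(h)$ is symmetric, which depends on the choice of the isomorphism $\Gamma\to\hat\Gamma$ (otherwise the identity holds with $X^{\intercal}$ in place of $X$); your observation that this symmetry holds for $\Gamma=(\F_q,+)$ with the trace pairing is precisely what is needed for the lemma's use in Corollary~\ref{cor-RNFDet} and Corollary~\ref{cor-EigValsGramPaley}.
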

In other words, the character table of $\Gamma$ diagonalizes the regular representation of $\Gamma$, and the eigenvalues of $\varrho(g)$ are given by evaluating $g$ at every irreducible character of $\Gamma$. Since $Q_{q}+\alpha I_q$ is a linear combination of the matrices $\varrho(K_i)$, Lemma \ref{lemma-RegularRepDiagonalization} gives us the following:
\begin{corollary}\label{cor-RNFDet}
    Let $Q_q$ be a generalized Paley matrix of order $q$. Then,
    \begin{equation}
    \det (W_q+\alpha I_{q+1})=
        \det \begin{bmatrix}
            \alpha & \mathbf{1}_q^{\intercal}\\
            \mathbf{1}_q & Q_q +\alpha I_q
        \end{bmatrix}
        =\frac{\alpha^2-q}{\alpha}\det(Q_q+\alpha I_q).
    \end{equation}
\end{corollary}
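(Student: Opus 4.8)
The plan is to exploit the block structure of $W_q + \alpha I_{q+1}$ together with the identities $Q_q Q_q^* = qI_q - J_q$ and $Q_q J_q = \mathbf{0}$. First I would use the standard Schur-complement formula for the determinant of a block matrix: writing
\[
W_q + \alpha I_{q+1} = \begin{bmatrix} \alpha & \mathbf{1}_q^{\intercal} \\ \mathbf{1}_q & Q_q + \alpha I_q \end{bmatrix},
\]
and assuming for the moment that $Q_q + \alpha I_q$ is invertible, the Schur complement of the bottom-right block gives
\[
\det(W_q + \alpha I_{q+1}) = \det(Q_q + \alpha I_q)\cdot\bigl(\alpha - \mathbf{1}_q^{\intercal}(Q_q + \alpha I_q)^{-1}\mathbf{1}_q\bigr).
\]
So the task reduces to evaluating the scalar $\mathbf{1}_q^{\intercal}(Q_q + \alpha I_q)^{-1}\mathbf{1}_q$, which I expect to be the main computational point.

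The key observation is that $\mathbf{1}_q$ is (up to scaling) the all-ones vector, and by identity (ii) we have $Q_q\mathbf{1}_q = \mathbf{0}$, hence $(Q_q + \alpha I_q)\mathbf{1}_q = \alpha\mathbf{1}_q$. Therefore $\mathbf{1}_q$ is an eigenvector of $Q_q + \alpha I_q$ with eigenvalue $\alpha$, so $(Q_q + \alpha I_q)^{-1}\mathbf{1}_q = \alpha^{-1}\mathbf{1}_q$ (valid since $\alpha \in \mu_\ell$ is nonzero). Consequently $\mathbf{1}_q^{\intercal}(Q_q + \alpha I_q)^{-1}\mathbf{1}_q = \alpha^{-1}\mathbf{1}_q^{\intercal}\mathbf{1}_q = q/\alpha$. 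Substituting back,
\[
\det(W_q + \alpha I_{q+1}) = \det(Q_q + \alpha I_q)\cdot\Bigl(\alpha - \frac{q}{\alpha}\Bigr) = \frac{\alpha^2 - q}{\alpha}\det(Q_q + \alpha I_q),
\]
as claimed.

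The one gap to close is the invertibility assumption on $Q_q + \alpha I_q$ used in the Schur-complement step. I would handle this either by a short continuity/density argument — the identity is polynomial in $\alpha$ and holds on a Zariski-dense set, hence everywhere — or, more cleanly, by replacing the Schur-complement manipulation with an explicit sequence of elementary row and column operations analogous to those already used in the proof of Proposition \ref{prop-ConstRSBoundComp}: subtract $\alpha^{-1}$ times the first row from the appropriate combination of the remaining rows to clear the $\mathbf{1}_q$ column below the pivot, using $Q_q\mathbf{1}_q = \mathbf{0}$ to see that this does not disturb the $Q_q + \alpha I_q$ block, and read off the determinant directly. This avoids inverting anything and makes the eigenvector computation $(Q_q + \alpha I_q)\mathbf{1}_q = \alpha\mathbf{1}_q$ do all the work. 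I do not expect any genuine obstacle here; the corollary is essentially a one-line consequence of identity (ii) once the block determinant is set up correctly, and the role of Lemma \ref{lemma-RegularRepDiagonalization} is only to remind the reader that $\det(Q_q + \alpha I_q)$ itself will later be computed via the character table, which is not needed for this statement.
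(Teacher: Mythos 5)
Your argument is correct, and it takes a genuinely different (and more elementary) route than the paper. The paper proves this corollary by conjugating $W_q+\alpha I_{q+1}$ with a block matrix built from the character table $X$ of $(\F_q,+)$: writing $W_q=A+B$, it computes $F^{-1}AF$ and $F^{-1}BF$ explicitly, reduces $W_q+\alpha I_{q+1}$ to the block form $\left[\begin{smallmatrix}\alpha & q\\ 1 & \alpha\end{smallmatrix}\right]\oplus(\Delta+\alpha I_{q-1})$, and reads off the determinant as $(\alpha^2-q)\det(\Delta+\alpha I_{q-1})=\frac{\alpha^2-q}{\alpha}\det(Q_q+\alpha I_q)$. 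That machinery is not wasted in the paper, since the same diagonalization (Lemma \ref{lemma-RegularRepDiagonalization}) is reused immediately afterwards to compute $\det(Q_q+\alpha I_q)$ itself; but for the corollary alone it is overkill. Your Schur-complement computation uses only the single fact $Q_q\mathbf{1}_q=\mathbf{0}$, which gives $(Q_q+\alpha I_q)\mathbf{1}_q=\alpha\mathbf{1}_q$ and hence $\mathbf{1}_q^{\intercal}(Q_q+\alpha I_q)^{-1}\mathbf{1}_q=q/\alpha$, and this is a complete proof. The invertibility caveat you flag is in fact vacuous here: $Q_q$ is normal (it lies in the commutative algebra $\varrho(\C[\Gamma])$), its spectrum is $\{0\}$ together with eigenvalues of modulus $\sqrt{q}$ by identity (i), so $-\alpha$ with $|\alpha|=1$ is never an eigenvalue when $q\geq 2$; alternatively your polynomial-identity argument disposes of it. One small correction to your fallback row-reduction sketch: subtracting $\alpha^{-1}$ times the first row from the remaining rows \emph{would} disturb the block $Q_q+\alpha I_q$ (it subtracts $\alpha^{-1}J_q$ from it). The clean elementary operation is the transpose of what you wrote: subtract $\alpha^{-1}$ times the sum of the last $q$ rows from the first row (equivalently, $\alpha^{-1}$ times the sum of the last $q$ columns from the first column), using $\mathbf{1}_q^{\intercal}(Q_q+\alpha I_q)=\alpha\mathbf{1}_q^{\intercal}$, which turns the first row into $[\alpha-q/\alpha\mid\mathbf{0}_q^{\intercal}]$ and yields the result by expansion. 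This is a cosmetic fix; the Schur-complement version of your argument stands on its own.
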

\begin{proof}
    To prove this, we find the rational normal form of the matrix $W_q+\alpha I_{q+1}$. The weighing matrix $W_q$ can be decomposed as $W_q=A+B$, where
    \begin{equation}
    A=\begin{bmatrix}
        0 & \mathbf{1}_q^{\intercal}\\
        \mathbf{1}_q & \mathbf{0}
    \end{bmatrix},\text{ and }
    B=
    \begin{bmatrix}
        0 & \mathbf{0}_q^{\intercal}\\
        \mathbf{0}_q & Q_q
    \end{bmatrix}.
    \end{equation}
     Let $X$ be the character table of $\Gamma=(\F_q,+)$, and let 
    \begin{equation}
        F = \begin{bmatrix}
            1 & \mathbf{0}_q^{\intercal}\\
            \mathbf{0}_q & X
        \end{bmatrix}.
    \end{equation}
    Since $X^{-1}=\frac{1}{q}X^*$, we have:

    \begin{equation}
        F^{-1}AF = \begin{bmatrix}
            0 & \mathbf{1}_q^{\intercal}X\\
            X^{-1}\mathbf{1}_q &\mathbf{0}
        \end{bmatrix}
        =
        \left[
        \begin{array}{cc|c}
            0 & q & \mathbf{0}_{q-1}^{\intercal}\\
            1 & 0 & \mathbf{0}_{q-1}^{\intercal}\\
            \hline
            \mathbf{0}_{q-1} & \mathbf{0}_{q-1} & \mathbf{0}
        \end{array}\right].
    \end{equation}
    Lemma \ref{lemma-RegularRepDiagonalization} implies that $X^{-1}Q_q X = \begin{bmatrix}0 & \mathbf{0}_{q-1}^{\intercal}\\ \mathbf{0}_q &\Delta\end{bmatrix}$, where $\Delta$ is the diagonal matrix given by the non-zero eigenvalues of $Q_q$. Hence,
    \begin{equation}
        F^{-1}BF = \left[\begin{array}{cc|c}
        0 & 0 & \mathbf{0}_{q-1}^{\intercal}\\
        0 & 0 & \mathbf{0}_{q-1}^{\intercal}\\
        \hline
        \mathbf{0}_{q-1} & \mathbf{0}_{q-1} & \Delta
        \end{array}\right].
    \end{equation}
    From the above, we conclude that
\begin{equation}
    \det \begin{bmatrix}
            \alpha & \mathbf{1}_q^{\intercal}\\
            \mathbf{1}_q & Q_q +\alpha I_q
        \end{bmatrix} = \det(W_q+\alpha I_{q+1})=\det \left[\begin{array}{cc|c}
        \alpha & q & \mathbf{0}_{q-1}^{\intercal}\\
        1 & \alpha & \mathbf{0}_{q-1}^{\intercal}\\
        \hline
        \mathbf{0}_{q-1} & \mathbf{0}_{q-1} & \Delta+\alpha I_{q-1}
        \end{array}\right].
\end{equation}
Computing the determinant by blocks, and noticing that $\det(\Delta+\alpha I_{q-1})=\frac{1}{\alpha}\det(Q_{q}+\alpha I_q)$, we obtain the result.\qedhere
\end{proof}

Corollary \ref{cor-RNFDet} allows us to reduce the computation of $\det(W_q+\alpha I_{q+1})$ to the computation of $\det(Q_q+\alpha I_{q})$. The following is an auxiliary result to compute this latter determinant:

\begin{proposition}\normalfont\label{prop-GramComputationPaley} Let $q=3f+1$ be a prime power, and $\alpha\in \C$. The Gram matrix $G=(Q_q+\alpha I_q)(Q_q+\alpha I_q)^*$, is expressed as the image under $\varrho$ of an element of $\C[\Gamma]$ as follows:
\begin{equation} 
G=\varrho\left((|\alpha|^2+q-1)[0]+(2\re(\alpha) -1)K_{0}+(2\re(\alpha\omega^2)-1)K_{1}+
(2\re(\alpha\omega)-1)K_{2}\right).
\end{equation}
\end{proposition}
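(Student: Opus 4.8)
The plan is to carry out the entire computation inside the group algebra $\C[\Gamma]$ of $\Gamma=(\F_q,+)$ and then push it forward through the regular representation $\varrho$. Two structural facts make this clean. First, $\varrho$ is an algebra homomorphism: the $(a,b)$ entry of $\varrho([x])\varrho([y])$ equals $1$ exactly when $a+x+y=b$, which is the defining condition of $\varrho([x+y])$. Second, $\varrho$ intertwines the conjugate transpose with the conjugate-linear involution $u\mapsto u^{\dagger}:=\sum_{x}\overline{u_x}\,[-x]$ of $\C[\Gamma]$, since every $\varrho([x])$ is a real permutation matrix with $\varrho([x])^{\intercal}=\varrho([-x])$. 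Writing $\kappa:=K_0+\omega K_1+\omega^2K_2$, so that $Q_q=\varrho(\kappa)$ and $Q_q+\alpha I_q=\varrho(\kappa+\alpha[0])$, the Gram matrix is $G=\varrho\bigl((\kappa+\alpha[0])(\kappa+\alpha[0])^{\dagger}\bigr)$; because $\C[\Gamma]$ is commutative, $(\kappa+\alpha[0])^{\dagger}=\kappa^{\dagger}+\overline{\alpha}[0]$, and it remains to expand $(\kappa+\alpha[0])(\kappa^{\dagger}+\overline{\alpha}[0])$ in terms of $[0],K_0,K_1,K_2$.

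First I would fix the involution. For $\ell=3$ the unique cyclotomic class containing $-1$ is $H_0$ (so $r=0$ in Proposition \ref{prop-CycloConstants}): if $q$ is even this is immediate since $-1=1$, and if $q$ is odd then $q-1=3f$ forces $f$ to be even, whence $-1=(\gamma^{f/2})^{3}\in H_0$. Hence $-H_i=H_i$, i.e.\ $K_i^{\dagger}=K_i$ for every $i$, giving $\kappa^{\dagger}=K_0+\overline{\omega}K_1+\overline{\omega^2}K_2=K_0+\omega^2K_1+\omega K_2$. Moreover, since $r=0$, Proposition \ref{prop-CycloConstants} reads $K_iK_j=f\,\delta_{ij}[0]+\sum_{k=0}^{2}(j-i,k-i)K_k$. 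Expanding,
\[(\kappa+\alpha[0])(\kappa^{\dagger}+\overline{\alpha}[0])=\kappa\kappa^{\dagger}+\bigl(\overline{\alpha}\,\kappa+\alpha\,\kappa^{\dagger}\bigr)+|\alpha|^2[0],\]
so the work splits into the part affine in $\alpha$ and the constant term $\kappa\kappa^{\dagger}$.

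The affine part is a short check: the coefficients of $K_0,K_1,K_2$ in $\overline{\alpha}\kappa+\alpha\kappa^{\dagger}$ are $\alpha+\overline{\alpha}$, $\overline{\alpha}\omega+\alpha\omega^2$, and $\overline{\alpha}\omega^2+\alpha\omega$, which using $\overline{\omega}=\omega^2$ equal $2\re(\alpha)$, $2\re(\alpha\omega^2)$, $2\re(\alpha\omega)$ respectively (for instance $\overline{\alpha\omega^2}=\overline{\alpha}\,\omega$). For the constant term, substitute the cyclotomic product into $\kappa\kappa^{\dagger}=\sum_{i,j}\omega^{i-j}K_iK_j$: the terms with $i=j$ contribute $3f=q-1$ to $[0]$, and the $K_k$-coefficient is $c_k:=\sum_{i,j}\omega^{i-j}(j-i,k-i)$. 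Re-indexing with $m=j-i$ and $n=k-i$ gives $c_k=\sum_{m,n}\omega^{-m}(m,n)=\sum_{m}\omega^{-m}\bigl(\sum_{n}(m,n)\bigr)$, which is independent of $k$. Now $\sum_{n}(m,n)$ counts the $x\in H_m$ with $x+1\neq 0$, so it equals $f-1$ for $m=0$ and $f$ for $m=1,2$ (since $-1\in H_0$; this also follows from Theorem \ref{thm-CubicCyclotomy}). Therefore $c_k=(f-1)+f(\omega^{-1}+\omega^{-2})=(f-1)-f=-1$ by $1+\omega+\omega^2=0$. Summing the three contributions, the coefficient of $[0]$ is $|\alpha|^2+q-1$ and that of each $K_k$ is $-1$ plus the matching $2\re(\cdot)$ term, which is exactly the claimed identity.

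I expect the only delicate points to be organizational rather than a hard estimate: getting the involution $\kappa\mapsto\kappa^{\dagger}$ right, which hinges on $-1\in H_0$ (equivalently $r=0$, a feature special to $\ell=3$); matching $2\re(\alpha),\,2\re(\alpha\omega^2),\,2\re(\alpha\omega)$ to $K_0,K_1,K_2$ in the correct order; and performing the index substitution that collapses the double sum $c_k$ to a $k$-independent single sum. Once those are pinned down, the vanishing of $1+\omega+\omega^2$ does the rest.
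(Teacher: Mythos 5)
Your proof is correct and follows essentially the same route as the paper: write $Q_q+\alpha I_q$ as $\varrho(\alpha[0]+K_0+\omega K_1+\omega^2K_2)$, use $-1\in H_0$ to identify the adjoint, expand the product via the cyclotomic relations of Proposition \ref{prop-CycloConstants}, and collapse everything with $\sum_j(i,j)\in\{f-1,f\}$ and $1+\omega+\omega^2=0$. Your re-indexing of the double sum $c_k=\sum_{m,n}\omega^{-m}(m,n)$ is a slightly tidier way of organizing the nine products $K_iK_j$ than the paper's grouping into diagonal and two cyclic off-diagonal families, and you also supply the (omitted in the paper) verification that $-1\in H_0$, but the substance is identical.
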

\begin{proof}
 The matrix $Q_q+\alpha I_q $ can be written as:
\begin{equation} Q_q+\alpha I_q = \varrho(\alpha[0]+ K_0+\omega K_1+\omega^2 K_2).
\end{equation}
Notice that $\rho(g)^{\intercal}=\rho(-g)$. Since $-1\in H_0$, we have that $-H_i=H_i$, and the matrix $(Q_q+\alpha I_q)^*$ is given by the element $\overline{\alpha}[0]+K_0+\omega^2 K_1 + \omega K_2\in\C[\Gamma]$. Computing the product in $\C[\Gamma]$ we find:
\begin{equation}
\begin{split}
&(\alpha [0]+ K_0+\omega K_1+\omega^2 K_2)(\overline{\alpha} [0] + K_0+ \omega^2 K_1+ \omega K_2)\\
= &|\alpha|^2[0]+\alpha K_0+\alpha\omega^2 K_1+\alpha \omega K_2\\
&\overline{\alpha}K_0+K_0^2 + \omega^2 K_0K_1+\omega K_0K_2\\
&\overline{\alpha}\omega K_1+\omega K_1K_0+K_1^2+\omega^2 K_1K_2\\
&\overline{\alpha}\omega^2 K_2+\omega^2 K_2K_0+\omega K_2K_1+K_2^2.
\end{split}
\end{equation}
We evaluate this expression: First we find by Proposition \ref{prop-CycloConstants} and Lemma \ref{lemma-CyclotomicNumsSymmetry}, that
\begin{equation}
K_0^2+K_1^2+K_2^2 = 3f[0] +\sum_k \left(\sum_i(0,k-i)\right)K_k =(q-1)f[0]+(f-1)(K_0+K_1+K_2).
\end{equation}
Next we evaluate $\omega(K_0K_2+K_1K_0+K_2K_1)$ and $\omega^2(K_0K_1+K_1K_2+K_2K_0)$. It is easy to check that
\begin{equation}
\begin{split}
K_0K_2+K_1K_0+K_2K_1&=f(K_0+K_1+K_2),\text{ and}\\
K_0K_1+K_1K_2+K_2K_0&=f(K_0+K_1+K_2).
\end{split}
\end{equation}
This implies that $\omega(K_0K_2+K_1K_0+K_2K_1)+\omega^2(K_0K_1+K_1K_2+K_2K_0)=f(\omega+\omega^2)(K_0+K_1+K_2)=-f(K_0+K_1+K_2).$ Therefore the Gram matrix is given by the image of $\varrho$ at the element
\begin{equation}
(|\alpha|^2+(q-1))[0]+(2\re(\alpha)-1)K_0+(2\re(\alpha\omega^2)-1)K_1+(2\re(\alpha\omega)-1)K_2,
\end{equation}
  as we wanted to show.\qedhere
\end{proof}

Now, Lemma \ref{lemma-RegularRepDiagonalization} will allow us to compute the eigenvalues of the Gram matrix of $Q_q+\alpha I_q$. These eigenvalues can be expressed using the following:
\begin{definition}\normalfont Let $q=\ell f+1$ be a prime power. The $\ell$-th \textit{Gaussian periods} are defined as 
\[\eta_i:=\sum_{x\in H_i} \zeta_q^{x},\ \ \  0\leq i\leq \ell-1.\]
\end{definition}

\begin{corollary}\label{cor-EigValsGramPaley} \normalfont Let $q=3f+1$ be a prime power, and let $Q_q$ be the generalised Paley core of order $q$ over the third roots. Then, the eigenvalues of the Gram matrix of $Q_q+\alpha I_q$ with $\alpha\in\{1,\omega,\omega^2\}$ are:
\begin{itemize}
\item[(i)] $1=q-3f$, occurring with multiplicity $1$, and
\item[(ii)] $(q+2)+3\eta_i$, each occurring with multiplicity $f$, for $i=0,1,2$.
\end{itemize}
\end{corollary}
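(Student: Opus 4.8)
The plan is to diagonalize $G=(Q_q+\alpha I_q)(Q_q+\alpha I_q)^*$ by combining Proposition~\ref{prop-GramComputationPaley} with Lemma~\ref{lemma-RegularRepDiagonalization}. First I would use that $\alpha\in\{1,\omega,\omega^2\}$ has $|\alpha|=1$, so the coefficient of $[0]$ in the group-algebra expression for $G$ is $|\alpha|^2+q-1=q$, giving
\[
G=\varrho\big(q[0]+(2\re\alpha-1)K_0+(2\re(\alpha\omega^2)-1)K_1+(2\re(\alpha\omega)-1)K_2\big).
\]
Lemma~\ref{lemma-RegularRepDiagonalization} says the character table $X$ of $\Gamma=(\F_q,+)$ diagonalizes every $\varrho(g)$, so extending by linearity, $X^{-1}\varrho\big(\sum_x c_x[x]\big)X$ is diagonal with entries $\sum_x c_x\chi(x)$ over the $q$ additive characters $\chi$ of $\Gamma$. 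Hence it remains only to evaluate the element above on each character.

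On the trivial character each $K_i$ contributes $|H_i|=f$, so the eigenvalue is $q+f\big((2\re\alpha-1)+(2\re(\alpha\omega^2)-1)+(2\re(\alpha\omega)-1)\big)=q+2f\,\re\big(\alpha(1+\omega+\omega^2)\big)-3f=q-3f=1$, using $1+\omega+\omega^2=0$ and $q=3f+1$; this is the eigenvalue in (i), with multiplicity one. For a nontrivial character, write it as $\chi_a(x)=\chi_1(ax)$ with $a\in\F_q^\times$ and $\chi_1$ a fixed nontrivial character, so $\eta_i=\sum_{x\in H_i}\chi_1(x)$. If $a$ lies in the cyclotomic class $H_j$, then $aH_i=H_{i+j}$, so $\sum_{x\in H_i}\chi_a(x)=\eta_{i+j}$ (indices mod $3$). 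Using $\eta_0+\eta_1+\eta_2=\sum_{x\in\F_q^\times}\chi_1(x)=-1$ and $\omega^2=\bar\omega$, $\omega=\bar\omega^2$, the eigenvalue attached to $\chi_a$ becomes $q+1+2\sum_{i=0}^{2}\re(\alpha\bar\omega^{i})\,\eta_{i+j}$.

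The last step is to note that for $\alpha=1,\omega,\omega^2$ (say $\alpha=\omega^s$, $s\in\{0,1,2\}$) the triple $\big(2\re\alpha,\,2\re(\alpha\bar\omega),\,2\re(\alpha\bar\omega^{2})\big)$ is exactly the cyclic rotation $(2,-1,-1)$, $(-1,2,-1)$, $(-1,-1,2)$; therefore $2\sum_i\re(\alpha\bar\omega^{i})\eta_{i+j}=3\eta_{j+s}-(\eta_{j+s}+\eta_{j+s+1}+\eta_{j+s+2})=3\eta_{j+s}+1$, and the eigenvalue is $(q+2)+3\eta_{j+s}$. As $a$ runs over $\F_q^\times$ the class index $j$ hits each of $0,1,2$ exactly $f$ times, and the fixed shift $s$ merely permutes these, so each of $(q+2)+3\eta_0,(q+2)+3\eta_1,(q+2)+3\eta_2$ occurs with multiplicity $f$; this is (ii). The multiplicities total $1+3f=q$, and since $-1\in H_0$ forces $\overline{\eta_i}=\eta_i$ these numbers are real, as they must be for the Hermitian $G$. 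The only point needing care is the index bookkeeping — pairing the coefficient triple of a given $\alpha$ with the correct rotation of the Gaussian periods and tracking the ``$+s$'' shift — together with the collapsing identity $\eta_0+\eta_1+\eta_2=-1$; everything else is direct substitution into Proposition~\ref{prop-GramComputationPaley}.
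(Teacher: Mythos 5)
Your proposal is correct and follows essentially the same route as the paper: express the Gram matrix via Proposition~\ref{prop-GramComputationPaley} as $\varrho$ of $q[0]$ plus a rotation of the coefficient triple $(1,-2,-2)$ on the $K_i$, diagonalize with Lemma~\ref{lemma-RegularRepDiagonalization}, and evaluate at the trivial and nontrivial characters using $\eta_0+\eta_1+\eta_2=-1$ and the fact that multiplication by $a\in H_j$ cyclically shifts the Gaussian periods. The paper merely collapses the coefficient triple to $K_i-2K_{i+1}-2K_{i+2}$ before evaluating, whereas you keep it symbolic slightly longer; the bookkeeping and multiplicity count are identical.
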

\begin{proof}
By Proposition \ref{prop-GramComputationPaley}, and using the fact that $\alpha\in \{1,\omega,\omega^2\}$, we have that $M=(Q_q+\alpha I_q)(Q_q+\alpha I_q)^*$ is given by the group algebra element
\begin{equation}
\begin{split}
&(|\alpha|^2+(q-1))[0]+(2\re(\alpha)-1)K_0+(2\re(\alpha\omega^2)-1)K_1+(2\re(\alpha\omega^2)-1)K_2\\
&=q[0]+K_{i}-2K_{i+1}-2K_{i+2},
\end{split}
\end{equation}
for some $i=0,1,2$. Lemma \ref{lemma-RegularRepDiagonalization} implies that the eigenvalues of $M$ are given by the evaluation of $q[0]+K_{i}-2K_{i+1}-2K_{i+2}$ at each linear character of the additive group $\Gamma=(\F_q,+)$. For the trivial character $\chi_0$, we find
\begin{equation}
\chi_0(q[0]+K_{i}-2K_{i+1}-2K_{i+2})=q-3|K_0|=q-3f=1.
\end{equation}
All non-trivial linear characters are of the type $\chi_j(x)$, $1\leq j\leq q-1$, where $\chi_j(1)=\zeta_q^{\gamma^j}$, and $\gamma$ is a primitive element of $\F_q^{\times}$. We have that 
\begin{equation}
\begin{split}
\chi_j(q[0]+K_i-2K_{i+1}-2K_{i+2})&=
q+\chi_j(K_i)-2\chi_j(K_{i+1})
-2\chi_j(K_{i+1})\\
&=q+\eta_{i+j}-2\eta_{i+j+1}-2\eta_{i+j+2}\\
&=q+3\eta_k -2(\eta_0+\eta_1+\eta_2),
\end{split}
\end{equation}
for some $k\in\{0,1,2\}$. Since $\sum_{x\in\F_  q}\zeta_q^x=0$, we have that $\eta_0+\eta_1+\eta_2=-1$, and the eigenvalues are $q+2+3\eta_i$, each with multiplicity $f=(q-1)/3$, for $i=0,1,2$.\qedhere
\end{proof}

\begin{theorem}\label{thm-PaleyDeterminant} Let $q=3f+1$ be a prime power, and $\alpha\in\{1,\omega,\omega^2\}$. Then, the absolute value of the determinant of $Q_q+\alpha I_q$ is
\begin{equation}
\begin{split}
|\det(Q_q+\alpha I_q)|&=\left(\prod_{i=0}^2((q+2)+3\eta_i)\right)^{f/2}\\
&=\left[(q+2)^3-3(q+2)^2-3(q-1)(q+2)+(3+c)q-1\right]^{(q-1)/6},
\end{split}
\end{equation}
where $c$ is uniquely determined by $4q=c^2+27d^2$, and $c\equiv 1\pmod{3}$.
\end{theorem}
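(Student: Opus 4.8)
The plan is to read $|\det(Q_q+\alpha I_q)|$ off the spectrum of its Gram matrix, already computed in Corollary~\ref{cor-EigValsGramPaley}, and then to evaluate the resulting product of Gaussian periods by means of the cubic cyclotomy recalled above.

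\emph{Squaring the determinant.} Since $\alpha\in\{1,\omega,\omega^2\}$, Corollary~\ref{cor-EigValsGramPaley} tells us that the Hermitian matrix $G=(Q_q+\alpha I_q)(Q_q+\alpha I_q)^*$ has eigenvalue $1$ with multiplicity $1$ and eigenvalues $(q+2)+3\eta_i$ with multiplicity $f=(q-1)/3$ for $i=0,1,2$; together these account for all $1+3f=q$ eigenvalues. Hence
\[
|\det(Q_q+\alpha I_q)|^2=\det G=\prod_{i=0}^2\bigl((q+2)+3\eta_i\bigr)^{f}.
\]
Since $G$ is positive semidefinite, $\Pi:=\prod_{i=0}^2\bigl((q+2)+3\eta_i\bigr)$ is a nonnegative real, so taking square roots gives the first claimed equality, with exponent $f/2=(q-1)/6$.

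\emph{Symmetric functions of the periods.} Writing $e_1,e_2,e_3$ for the elementary symmetric functions of $\eta_0,\eta_1,\eta_2$,
\[
\Pi=(q+2)^3+3(q+2)^2e_1+9(q+2)e_2+27e_3,
\]
so it remains to show $e_1=-1$, $e_2=-(q-1)/3$ and $e_3=\bigl((c+3)q-1\bigr)/27$. The value $e_1=\eta_0+\eta_1+\eta_2=\sum_{x\in\F_q^{\times}}\zeta_q^{x}=-1$ is immediate. For $e_2$ I would use Lemma~\ref{lemma-RegularRepDiagonalization}: $Q_q=\varrho(K_0+\omega K_1+\omega^2K_2)$ is normal and one of its nonzero eigenvalues is $\lambda=\eta_0+\omega\eta_1+\omega^2\eta_2$; since $-1\in H_0$ one has $-H_i=H_i$, so each $\eta_i$ is real, and the identity $Q_qQ_q^*=qI_q-J_q$ forces $|\lambda|^2=q$. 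Expanding $\lambda\overline{\lambda}$ and using $\omega+\omega^2=-1$ gives $|\lambda|^2=(\eta_0^2+\eta_1^2+\eta_2^2)-e_2=(e_1^2-2e_2)-e_2=1-3e_2$, whence $e_2=-(q-1)/3$. For $e_3=\eta_0\eta_1\eta_2=\sum_{(x_0,x_1,x_2)\in H_0\times H_1\times H_2}\zeta_q^{x_0+x_1+x_2}$ I would count representations: multiplying a triple by a fixed element of $H_0$ shows that the number of triples with a prescribed nonzero sum depends only on the cyclotomic class of that sum, and multiplying by an element of $H_1$ cyclically permutes the three classes, so this number is a single common value $m$; writing $M_0$ for the number of triples with zero sum, $M_0+3fm=f^3$. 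Taking the sum to be $-1\in H_0$, the value $m$ is precisely the count $N=\tfrac{1}{27}(q^2-3q-c)$ of Lemma~\ref{lemma-TripleSum}. Therefore $e_3=M_0+m(\eta_0+\eta_1+\eta_2)=M_0-m=f^3-qm=f^3-qN$, and substituting $f=(q-1)/3$, $N=(q^2-3q-c)/27$ simplifies this to $e_3=\bigl((c+3)q-1\bigr)/27$.

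\emph{Conclusion.} Substituting $e_1,e_2,e_3$ into the expansion of $\Pi$ yields
\[
\Pi=(q+2)^3-3(q+2)^2-3(q-1)(q+2)+(c+3)q-1,
\]
and raising to the power $(q-1)/6$ gives the second equality. (Equivalently, the computation of $e_1,e_2,e_3$ amounts to the classical fact that $\eta_0,\eta_1,\eta_2$ are the roots of the cubic period polynomial $g(t)=t^3+t^2-\tfrac{q-1}{3}t-\tfrac{(c+3)q-1}{27}$, so that $\Pi=-27\,g\bigl(-(q+2)/3\bigr)$.) I expect the only genuinely delicate point to be the evaluation of $e_3$: it is the single place where the arithmetic invariant $c$ enters, through Lemma~\ref{lemma-TripleSum} and Theorem~\ref{thm-CubicCyclotomy}, and one must be careful that the counting argument correctly identifies $m$ with $N$ — note that Lemma~\ref{lemma-TripleSum} counts solutions of $x_0+x_1+x_2=-1$, a fixed element of $H_0$, not solutions of $x_0+x_1+x_2=0$.
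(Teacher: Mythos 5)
Your proof is correct, and it follows the paper's skeleton for the first equality (both read $|\det(Q_q+\alpha I_q)|^2$ off the spectrum of the Gram matrix via Corollary \ref{cor-EigValsGramPaley}), but it evaluates the product $\Pi=\prod_i\bigl((q+2)+3\eta_i\bigr)$ by a genuinely different route. The paper carries out the entire expansion inside $\C[\Gamma]/(S_\Gamma)$: it computes $K_0K_1+K_0K_2+K_1K_2=f(K_0+K_1+K_2)$ from the cyclotomic-number relations of Lemma \ref{lemma-CyclotomicNumsSymmetry}, and expands $K_0K_1K_2$ explicitly in the cubic cyclotomic numbers $A,B,C,D$, at which point it needs \emph{both} identities $AD+B^2+C^2=N$ and $BC+BD+CD=N$ from Lemma \ref{lemma-TripleSum} to collect the coefficients of $K_0$ and of $K_1,K_2$. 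You instead work with the elementary symmetric functions of the periods directly: your derivation of $e_2=-(q-1)/3$ from $|\eta_0+\omega\eta_1+\omega^2\eta_2|^2=q$ (i.e.\ from $Q_qQ_q^*=qI_q-J_q$) bypasses the cyclotomic computation entirely, and your homogeneity argument for $e_3$ — that the number of triples in $H_0\times H_1\times H_2$ with prescribed nonzero sum is independent of the class of that sum — shows \emph{a priori} that the three coefficients of $K_0K_1K_2$ on $K_0,K_1,K_2$ coincide, so a single appeal to Lemma \ref{lemma-TripleSum} suffices. Your closing caveat is well taken and correctly resolved: Lemma \ref{lemma-TripleSum} counts solutions of $x_0+x_1+x_2=-1$, and your averaging argument is exactly what licenses identifying that count with the common value $m$. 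The two approaches are of course linked by evaluating $K_i\mapsto\eta_i$ at a nontrivial character, but yours is slightly more economical and makes the role of the period polynomial transparent.
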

\begin{proof}
By Corollary \ref{cor-EigValsGramPaley}, the determinant of $(Q_q+\alpha I_q)(Q_q+\alpha I_q)^*$ is
\begin{equation}
[(q+2+3\eta_0)(q+2+3\eta_1)(q+2+3\eta_2)]^f.
\end{equation}
By Lemma \ref{lemma-RegularRepDiagonalization}, instead of directly computing the product above, we can equivalently compute it using the images of the elements $K_i$ in the quotient ring $\C[\Gamma]/(S_{\Gamma})$, where $S_{\Gamma}:=\sum_{x\in \Gamma}[x]$. The product $\prod_{i} ((q+2)[0]+3K_i)$ expands as
\begin{equation}
\begin{split}
(q+2)^3[0]&+3(q+2)^2(K_0+K_1+K_2)\\
&+3^2(q+2)(K_0K_1+K_0K_2+K_1K_2)\\
&+3^3K_0K_1K_2.
\end{split}
\end{equation}
We have that $K_0K_1+K_0K_2+K_1K_2=f(K_0+K_1+K_2)\equiv -f[0] \pmod{S_{\Gamma}}$, so
\begin{equation}
\prod_{i=0}^2((q+2)[0]+3K_i)\equiv ((q+2)^3 -3(q+2)^2-3^2(q+2)f)[0]+ 3^3K_0K_1K_2\pmod{S_{\Gamma}}.
\end{equation}
It remains to compute the term $K_0K_1K_2$. Using the notation of Theorem \ref{thm-CubicCyclotomy}, we have
\begin{equation}
\begin{split}
K_0K_1K_2&= K_0(DK_0+BK_1+CK_2)\\
&=fD\cdot [0]+ADK_0+BDK_1+CDK_2\\
&\phantom{=fD\cdot [0]\ }
+B^2K_0+BC K_1+ BDK_2\\
&\phantom{=fD\cdot [0]\ }
+C^2K_0+CDK_1+CBK_2.
\end{split}
\end{equation}
By Lemma \ref{lemma-TripleSum}, we have that $AD+B^2+C^2=BC+BD+CD=N=(q^2-3q-c)/3^3$.
Therefore,
\begin{equation}
\begin{split}
K_0K_1K_2&=fD[0]+N(K_0+K_1+K_2)\\
&\equiv (fD-N)[0]\pmod{S_{\Gamma}}.
\end{split}
\end{equation}
Substituting $N=(q^2-3q-c)/3^3$, and using the fact that $f=(q-1)/3$ and $3^2D=(q+1+c)$, we have that $3^2(q+2)f=3(q-1)(q+2)$ and $3^3(fD-N)=3q-1+qc$. Therefore,
\begin{equation}
\prod_{i=0}^2((q+2)[0]+3K_i)\equiv ((q+2)^3 -3(q+2)^2-3(q-1)(q+2)+(3+c)q-1)[0]\pmod{S_{\Gamma}}.
\end{equation}
Evaluating this expression at a non-trivial character of $\F_q$, the result follows.\qedhere
\end{proof}
\begin{corollary}\normalfont For every prime power $q\equiv 1\pmod{3}$, there is a matrix $M$ of order $q+1\equiv 2\pmod{3}$ over the third roots of unity such that
\begin{equation}\label{eq-PrimePowerBound}
|\det M|=\sqrt{q^2+q+1}\cdot \left[(q+2)^3-3(q+2)^2-3(q-1)(q+2)+(3+c)q-1\right]^{(q-1)/6},
\end{equation}
where $c$ is uniquely determined by $4q=c^2+27d^2$, and $c\equiv 1\pmod{3}$.
\end{corollary}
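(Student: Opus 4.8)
The plan is to exhibit the explicit matrix $M = W_q + \omega I_{q+1}$, where $W_q$ is the bordered generalized Paley matrix of Equation~\eqref{eq-WeighingMatrix} and $\omega = \zeta_3$. First I would check that $M$ is genuinely a matrix over $\mu_3$: the matrix $W_q$ has a zero diagonal (the zero in the top-left border entry, together with $\chi(0)=0$ along the diagonal of $Q_q$), its border entries are all equal to $1$, and its remaining off-diagonal entries are the values $\chi(i-j)$ with $i\neq j$, which are third roots of unity since $i-j\neq 0$. Adding $\omega I_{q+1}$ replaces precisely the zero diagonal by $\omega$, so every entry of $M$ lies in $\mu_3$. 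Its order is $q+1$, and since $q\equiv 1\pmod 3$ this is $\equiv 2\pmod 3$, as required.

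Next I would apply Corollary~\ref{cor-RNFDet} with $\alpha=\omega$, which gives $\det M = \frac{\omega^2 - q}{\omega}\det(Q_q+\omega I_q)$, so that, taking absolute values and using $|\omega|=1$, we get $|\det M| = |\omega^2 - q|\cdot|\det(Q_q+\omega I_q)|$. A one-line computation yields $|\omega^2 - q|^2 = (\omega^2 - q)(\omega - q) = 1 - q(\omega+\omega^2) + q^2 = q^2 + q + 1$, since $\omega+\omega^2 = -1$; hence $|\omega^2 - q| = \sqrt{q^2+q+1}$. Finally I would substitute the value of $|\det(Q_q+\omega I_q)|$ supplied by Theorem~\ref{thm-PaleyDeterminant} (applicable since $\omega\in\{1,\omega,\omega^2\}$), namely $\left[(q+2)^3 - 3(q+2)^2 - 3(q-1)(q+2) + (3+c)q - 1\right]^{(q-1)/6}$ with $c$ determined by $4q = c^2 + 27d^2$ and $c\equiv 1\pmod 3$, and multiply the two factors to arrive at Equation~\eqref{eq-PrimePowerBound}.

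There is essentially no substantive obstacle here, since the real work has already been done in Corollary~\ref{cor-RNFDet} and Theorem~\ref{thm-PaleyDeterminant}. The only points requiring a little care are verifying that $M$ truly has all of its entries in $\mu_3$ (which hinges on $Q_q$ having a zero diagonal and nonzero off-diagonal entries) and explaining the choice $\alpha\in\{\omega,\omega^2\}$ rather than $\alpha=1$: taking $\alpha=1$ would instead produce the smaller leading factor $|1-q| = q-1 < \sqrt{q^2+q+1}$, so the formula in the statement corresponds to the optimal choice of $\alpha$ among the third roots of unity.
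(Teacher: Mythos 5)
Your proposal is correct and follows essentially the same route as the paper: apply Corollary~\ref{cor-RNFDet} with $\alpha=\omega$, note that $\left|\frac{\alpha^2-q}{\alpha}\right|=\sqrt{q^2+q+1}$ for $\alpha\in\{\omega,\omega^2\}$ (versus $q-1$ for $\alpha=1$), and plug in the value of $|\det(Q_q+\omega I_q)|$ from Theorem~\ref{thm-PaleyDeterminant}. Your added checks~\textemdash~that $W_q+\omega I_{q+1}$ genuinely has all entries in $\mu_3$, and that $\alpha\in\{\omega,\omega^2\}$ is the optimal choice~\textemdash~are correct and consistent with the paper's (more terse) argument.
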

\begin{proof}
By Corollary \ref{cor-RNFDet}, the determinant of $W+\alpha I_{q+1}$ is
\begin{equation}
  |\det(W+\alpha I_{q+1})|=\left|\frac{\alpha^2-q}{\alpha}\right||\det(Q_q+\alpha I_q)|.
\end{equation}
If $\alpha=1$, then $|\alpha^2-q|/|\alpha|=q-1$. On the other hand, if $\alpha\in\{\omega,\omega^2\}$ then $|\alpha-q|/|\alpha|=\sqrt{q^2+q+1}>q-1$, so the largest value of the determinant is obtained with $\alpha=\omega$ or $\alpha=\omega^2$. Applying Theorem \ref{thm-PaleyDeterminant} to $Q_q+\omega I_q$, we find that the determinant of $M=W+\omega I_{q+1}$ satisfies Equation \ref{eq-PrimePowerBound}.\qedhere
\end{proof}
In particular, this gives an infinite family of matrices of order $n\equiv 2\pmod{3}$ that achieve a constant ratio of the Barba bound. We have
\begin{equation}
  \lim_{q\rightarrow\infty} \frac{\sqrt{q^2+q+1}\cdot \left[(q+2)^3-3(q+2)^2-3(q-1)(q+2)+(3+c)q-1\right]^{(q-1)/6}}{\sqrt{2q+1}\cdot q^{q/2}}=\frac{1}{\sqrt{2}}.
  \end{equation}
So our construction achieves approximately $70\%$ of the Barba bound in the limit.

\begin{example}
  Let $q=4\equiv 1\pmod{3}$, and write $\F_4=\{0,1,t,t+1\}$, where $t^2=t+1$. Then $H_0=\{1\}$, $H_1=\{t\}$, and $H_2=\{t+1\}$; the generalized Paley matrix is:
  \begin{equation}
    Q_4 = \begin{bmatrix}
      0 & 1 & \omega & \omega^2\\
      1 & 0 & \omega^2 & \omega\\
      \omega & \omega^2 & 0 & 1\\
      \omega^2 & \omega & 1 & 0
      \end{bmatrix},
  \end{equation}
  and the matrix of our construction is
  \begin{equation}\label{eq-M5}
    M_5 = \begin{bmatrix}
      \omega & \mathbf{1}_n^{\intercal}\\
      \mathbf{1}_n & Q_4+\omega I_4
    \end{bmatrix}
    =
    \begin{bmatrix}
      \omega & 1 & 1 & 1 & 1\\
      1 & \omega & 1 & \omega & \omega^2\\
      1 & 1 & \omega & \omega^2 & \omega\\
      1 & \omega & \omega^2 & \omega & 1\\
      1 &\omega^2 & \omega  & 1 & \omega
      \end{bmatrix}.
  \end{equation}
  It is easy to check that $|\det(M_5)|=\sqrt{1701}$. Later we will show (see Table \ref{tab-Search5}) that this matrix is of maximal determinant over the third roots.
\end{example}

\begin{example}
  Let $q=7\equiv 1\pmod{3}$, and write $\F_7=\{0,1,2,3,4,5,6\}$. Then $H_0=\{1,6\}$, $H_1=\{3,4\}$, and $H_2=\{2,5\}$; the generalized Paley matrix is:
  \begin{equation}
    Q_7 = \begin{bmatrix}
      0 & 1 & \omega^2 & \omega & \omega & \omega^2 & 1\\
      1 & 0 & 1 & \omega^2 & \omega & \omega & \omega^2\\
      \omega^2 & 1 & 0 & 1 & \omega^2 & \omega & \omega\\
      \omega & \omega^2 & 1 & 0 & 1 & \omega^2 & \omega\\
      \omega & \omega & \omega^2 & 1 & 0 & 1 & \omega^2\\
      \omega^2 & \omega & \omega & \omega^2 & 1 & 0 & 1\\
      1 & \omega^2 & \omega & \omega & \omega^2 & 1 & 0
      \end{bmatrix},
  \end{equation}
  and the matrix given by our construction is,

  \begin{equation}
    W_8+\omega I_8 = \begin{bmatrix}
      \omega & \mathbf{1}_n^{\intercal}\\
      \mathbf{1}_n & Q_7+\omega I_7
    \end{bmatrix}
    =
    \begin{bmatrix}
      \omega & 1 & 1 & 1 & 1 & 1 & 1 & 1\\
      1 & \omega & 1 & \omega^2 & \omega & \omega & \omega^2 & 1\\
      1 & 1 & \omega & 1 & \omega^2 & \omega & \omega & \omega^2\\
      1 & \omega^2 & 1 & \omega & 1 & \omega^2 & \omega & \omega\\
      1 & \omega & \omega^2 & 1 & \omega & 1 & \omega^2 & \omega\\
      1 & \omega & \omega & \omega^2 & 1 & \omega & 1 & \omega^2\\
      1 & \omega^2 & \omega & \omega & \omega^2 & 1 & \omega & 1\\
      1 & 1 & \omega^2 & \omega & \omega & \omega^2 & 1 & \omega
      \end{bmatrix}.
    \end{equation}
  We find $|\det(W_8+\omega I_8)|=\sqrt{7022457}$ and, in this case, the matrix is not maximal determinant over the third roots. Instead, it achieves $\approx 78.4\%$ of the maximal value.
\end{example}

%%%%%%%%%%%%%%%%%%%%%%%%%%%%%%%%%%%%%%%%%%%%%%%%%%%%
%%% === SECTION - SMALL MAXIMAL DETERMINANTS === %%%
%%%%%%%%%%%%%%%%%%%%%%%%%%%%%%%%%%%%%%%%%%%%%%%%%%%%

\section{Maximal determinant matrices of small order over the third roots}\label{sec-3RootsSmall}
In this section we given an account of maximal determinant values for matrices over the third roots of unity. To this end, we first summarize the known results on the existence of $\BH(n,3)$ matrices with $n\equiv 0\pmod{3}$, then we give examples of Barba matrices of small orders $n\equiv 1\pmod{3}$, and finally we give certificates of determinant maximality for matrices of order $n\equiv 2\pmod{3}$ that do not meet the Hadamard or Barba bounds. These results are summarized in Table \ref{tab-Maxdet3}.

\begin{table}[h]
  \centering
  \caption{Largest known determinant values for matrices of order $n$ over the third roots: The columns labelled $n$ contain the matrix order. The columns labelled $|\det|^2/3^{n-1}$ contain the absolute value squared of the largest known determinant value at order $n$, divided by $3^{n-1}$. Columns labelled R contain the ratio of the largest known determinant absolute value to the Hadamard bound for orders $n\equiv 1\pmod{3}$, and to the Barba bound for orders $n\equiv 1,2\pmod{3}$. We use the symbol '\red{??}' to indicate that we have no proof of maximality for the given determinant value. If said symbol does not appear, the corresponding determinant value has been proven to be maximal.}\label{tab-Maxdet3}
  
  \begin{tabular}{*{3}{|lcr|}}
    \hline
    $n$ & $|\det|^2/3^{n-1}$ & R & $n$ & $|\det|^2/3^{n-1}$ & R & $n$ & $|\det|^2/3^{n-1}$ & R \\
    \hline
      & & & $1$ & $1$ & $1$ & $2$ & $1$ & $1$\\
    $3$ & $3$ & $1$ & $4$ & $7$ & $1$ & $5$ & $3\times 7$& $0.86$\\
    $6$ & $2^6\times 3$ & $1$ & $7$ & $2^6\times 13$ & $1$ & $8$ & $2^{12}$ & $0.85$\\
    $9$ & $3^{10}$ & $1$ & $10$ & $3^9\times 19$ & $1$ & $11$ & $3^9\times 7\times 19$ & $0.86$\\
    $12$ & $2^{24}\times 3$ & $1$ & $13$ & $2^{24}\times 5^2$ & $1$ & $14$ & $2^{24}\times 223$\red{??} & \red{$0.85$}\\
    $15$ & $2^{22}\times 3^6\times 19$\red{??} & \red{$0.79$} & $16$ & $2^{24}\times 3^{8}\times 7$\red{??} & \red{$0.90$} & $17$ & $13^{5}\times 67^{4}$\red{??} & \red{$0.72$}\\
    $18$ & $2^{18}\times 3^{19}$ & 1 & $19$ & $13\times 37^2 \times 342037^2$\red{??} & \red{$0.74$} & $20$ & $7^6\times 37^6 \times 127$ \red{??} & \red{$0.76$}\\ \hline
    \end{tabular}
  \end{table}

\subsection{Butson matrices over the third roots}
Butson-type Hadamard matrices over the third roots, $\BH(n,3)$, can only exist at orders $n\equiv 0\pmod{3}$. There are two general constructions based on tensor products that we can use to generate $\BH(n,3)$ matrices. See the survey on Butson-type Hadamard matrices on Chapter 4 of \cite{Ponasso-Thesis}.

\begin{enumerate}
\item The tensor product $A\otimes B$ of a $\BH(n,3)$ and a $\BH(m,3)$ is a $\BH(nm,3)$ \textemdash this is Sylvester's construction \cite{Sylvester-InverseOrthogonal}.
\item If there is a $\BH(n,3)$ and a projective plane of order $n-1$, then there is a $\BH((n-1)n,3)$\textemdash this is a generalization of Scarpis' construction, cf. \cite{Scarpis} and see Theorem 4.3.3 in \cite{Ponasso-Thesis}.
\end{enumerate}

Warwick de Launey generalized Scarpis' result for to show the existence of $\BH(2^t\cdot 3,3)$ matrices for all $t\geq 0$, \cite{deLauney-Thesis}. In particular it shows the existence of a $\BH(6,3)$ matrix, recovering the result of Butson \cite{Butson} on the existence of $\BH(2p,p)$ matrices for all primes $p$ in the case $p=3$. These explain the entries in Table \ref{tab-Maxdet3} at orders $n\equiv 0\pmod{3}$: using the fact that the Fourier matrix $F_3$ is a $\BH(3,3)$ and the results above, we can construct Butson-type Hadamard matrices $\BH(6,3), \BH(9,3), \BH(12,3),$ and $\BH(18,3)$. Using the non-existence results of Winterhof \cite{Winterhof-NonexistenceButson}, one can show that there is no $\BH(15,3)$\textemdash see also Chapter 3 of \cite{Ponasso-Thesis}. The open cases for the existence of a $\BH(n,3)$ matrix with $n<100$ are  $n=39, 42, 57, 60, 66, 75, 78, 84,$ and $93$.

\subsection{Barba matrices over the third roots}
Barba matrices over the third roots only exist at orders $n\equiv 1\pmod{3}$. Recall that for an element $\alpha=a+b\omega\in\Z[\omega]$ the \textit{norm} of $\alpha$ in the quadratic ring extension $\Z\subset\Z[\omega]$ is given by $N(\alpha):=\alpha\overline{\alpha}=a^2-ab+b^2$. The proposition below imposes restrictions on the prime factors of $N(\alpha)$, and can be used to obtain additional non-existence conditions for Barba matrices:

\begin{proposition}\label{prop-Norm3Conditions} Let $\alpha=a+b\omega\in\Z[\omega]$, and let $N(\alpha):=\alpha\overline{\alpha}=a^2-ab+b^2\in \Z$ be the norm of $\alpha$. Suppose that $p\neq 3$ is a prime number dividing the square-free part of $N(\alpha)$. Then, $p\equiv 1\pmod{3}$.
\end{proposition}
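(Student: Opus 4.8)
The plan is to work in the Dedekind domain $\Z[\omega]$, whose prime factorization behavior is completely governed by the splitting of rational primes, and deduce the congruence condition on $p$ from the fact that such a $p$ divides a norm. First I would recall the classical description of prime splitting in $\Z[\omega]$: a rational prime $p\neq 3$ is inert if $p\equiv 2\pmod 3$ and splits as $p=\pi\overline{\pi}$ (with $\pi,\overline{\pi}$ non-associate primes) if $p\equiv 1\pmod 3$, while $3$ ramifies as $3=-\omega^2(1-\omega)^2$. This follows from quadratic reciprocity — equivalently, from the fact that $-3$ is a square mod $p$ iff $p\equiv 1\pmod 3$ — applied to the minimal polynomial $x^2+x+1$ of $\omega$, since $p$ splits iff $x^2+x+1$ has a root mod $p$, and the discriminant of that polynomial is $-3$.

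Next I would argue by contradiction. Suppose $p\equiv 2\pmod 3$ divides the square-free part of $N(\alpha)$, meaning $p\mid N(\alpha)$ but $p^2\nmid N(\alpha)$. Since $p$ is inert, $(p)$ is a prime ideal of $\Z[\omega]$, and $N(\alpha)=\alpha\overline{\alpha}$, so $p\mid N(\alpha)$ forces $p\mid\alpha$ or $p\mid\overline{\alpha}$ in $\Z[\omega]$; but complex conjugation is a ring automorphism fixing $\Z$, so $p\mid\alpha$ iff $p\mid\overline{\alpha}$, hence $p$ divides both. Then $p^2\mid\alpha\overline{\alpha}=N(\alpha)$ in $\Z[\omega]$, and since $N(\alpha)\in\Z$ this is a divisibility of integers, contradicting $p^2\nmid N(\alpha)$. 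Therefore no prime $p\equiv 2\pmod 3$ can divide the square-free part of $N(\alpha)$, which (excluding $p=3$ by hypothesis) leaves exactly $p\equiv 1\pmod 3$.

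The only genuinely delicate point is making sure the notion of "square-free part" is used correctly: the statement is about a prime dividing $N(\alpha)$ to odd multiplicity (equivalently, $p$ divides the square-free part), and the argument shows that an inert prime $p$ always divides $N(\alpha)$ to \emph{even} multiplicity — indeed the $p$-adic valuation of $N(\alpha)$ in $\Z$ equals twice the $(p)$-adic valuation of $\alpha$ in $\Z[\omega]$. I would spell out this valuation identity explicitly, as it is the crux: $v_p(N(\alpha)) = v_{(p)}(\alpha) + v_{(p)}(\overline{\alpha}) = 2\,v_{(p)}(\alpha)$, using that the residue degree of the inert prime is $2$ so the absolute norm of $(p)$ is $p^2$, together with conjugation-invariance of the valuation. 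This is essentially the only step that requires care; the rest is bookkeeping with the standard arithmetic of the Eisenstein integers.
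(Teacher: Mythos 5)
Your proof is correct and follows essentially the same route as the paper: both arguments rest on the inert/split dichotomy for $p\neq 3$ in the Dedekind domain $\Z[\omega]$, the observation that an inert prime divides $N(\alpha)=\alpha\overline{\alpha}$ to even multiplicity (so cannot divide the square-free part), and quadratic reciprocity via $\legendre{-3}{p}$ to identify the split primes as those $\equiv 1\pmod 3$. Your second paragraph briefly misreads ``divides the square-free part'' as $p\mid N(\alpha)$ but $p^2\nmid N(\alpha)$, but you catch and repair this yourself with the valuation identity $v_p(N(\alpha))=2\,v_{(p)}(\alpha)$ in the final paragraph, so the argument as a whole stands.
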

\begin{proof}
  Recall from algebraic number theory, e.g. Theorem 25 in \cite{Marcus}, that, in the quadratic ring extension $\Z\subset\Z[\omega]$, the assumption $p\neq 3$ implies two possibilities for the prime ideal decomposition of the ideal $(p)$ in $\Z[\omega]$:
  \begin{enumerate}
  \item $(p)$ is prime and \textit{self-conjugate}, in the sense that $\overline{(p)}=(p)$ \textemdash the \textit{inert} case; or
  \item $(p)=\mathfrak{p}\mathfrak{q}$, where $\mathfrak{p}$ and $\mathfrak{q}$ are \textit{conjugate} prime ideals in $\Z[\omega]$, i.e. $\mathfrak{q}=\overline{\mathfrak{p}}$ \textemdash the \textit{split} case.
  \end{enumerate}
  Since $\Z[\omega]$ is a Dedekind domain, the prime ideal factorization of $(\alpha\overline{\alpha})=(\alpha)\overline{(\alpha)}$ is unique. Suppose $(p)$ divides $(\alpha)$, then $\overline{(p)}$ divides $\overline{(\alpha)}$ and $(p)$ must have the decomposition $(p)=\mathfrak{p}\mathfrak{q}$ with $\mathfrak{p}=\overline{\mathfrak{q}}$; otherwise, $p$ cannot divide the square-free part of $N(\alpha)$ \textemdash the multiplicity with which $p$ divides $\alpha\overline{\alpha}$ is even. The prime $(p)$ splits as $(p)=\mathfrak{p}\mathfrak{q}$ in $\Z[\omega]$ if and only if $-3$ is a square residue modulo $p$, i.e. if and only if $\legendre{-3}{p}=+1$; see Theorem 25 \cite{Marcus}. By quadratic reciprocity $\legendre{-3}{p}=\legendre{p}{3}$ and $(p)$ splits if and only if $p\equiv 1\pmod{3}$.
\end{proof}

\begin{theorem}\label{thm-NonExistenceBarba3} Let $n\equiv 1\pmod{3}$ be an integer, and write $(2n-1)(n-1)^{n-1}=s^2r$ with $r$ square-free. If $p\equiv 2\pmod{3}$ is a prime number dividing $r$, then there is no Barba matrix of order $n$ over the third roots.
\end{theorem}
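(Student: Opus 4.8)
The plan is to argue by contradiction, exploiting the fact that the determinant of a Barba matrix over $\mu_3$ is an Eisenstein integer whose norm is completely prescribed. Suppose $B$ is a Barba matrix of order $n$ with entries in $\mu_3=\{1,\omega,\omega^2\}$. Since $\omega^2=-1-\omega$, every entry of $B$ lies in $\Z[\omega]$, so $\alpha:=\det B\in\Z[\omega]$. Writing $\alpha=a+b\omega$, the key observation is that complex conjugation is the nontrivial automorphism of $\Q(\omega)$, hence
\[
|\det B|^2=\det B\cdot\overline{\det B}=N(\alpha)=a^2-ab+b^2\in\Z .
\]

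Next I would evaluate $|\det B|^2$ using the defining relation $BB^*=(n-1)I_n+J_n$. The matrix $J_n$ has eigenvalue $n$ with multiplicity $1$ and eigenvalue $0$ with multiplicity $n-1$, so $(n-1)I_n+J_n$ has eigenvalue $2n-1$ once and $n-1$ with multiplicity $n-1$; therefore $N(\alpha)=\det(BB^*)=(2n-1)(n-1)^{n-1}$. Writing $(2n-1)(n-1)^{n-1}=s^2r$ with $r$ square-free, one checks that $r$ is exactly the square-free part of $N(\alpha)$: for every prime $p$ one has $v_p(N(\alpha))=2v_p(s)+v_p(r)$ with $v_p(r)\in\{0,1\}$, so $v_p(N(\alpha))$ is odd precisely when $p\mid r$.

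Finally I would apply Proposition \ref{prop-Norm3Conditions} to $\alpha=\det B$: any prime $p\neq 3$ dividing the square-free part of $N(\alpha)$ satisfies $p\equiv 1\pmod 3$. If $p\equiv 2\pmod 3$ is a prime dividing $r$, then $p\neq 3$ (as $3\equiv 0\pmod 3$) and $p$ divides the square-free part of $N(\alpha)$, forcing $p\equiv 1\pmod 3$, a contradiction. Hence no such Barba matrix exists. I do not expect a genuine obstacle here: the argument is a direct combination of the norm restriction of Proposition \ref{prop-Norm3Conditions} with the determinant identity $|\det B|^2=(2n-1)(n-1)^{n-1}$, and the only steps requiring a moment of care are the eigenvalue count for $(n-1)I_n+J_n$ and the identification of $r$ as the square-free part of the norm.
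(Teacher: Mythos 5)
Your proposal is correct and follows exactly the paper's own argument: identify $N(\det B)=\det(BB^*)=(2n-1)(n-1)^{n-1}$ via the eigenvalues of $(n-1)I_n+J_n$, and then invoke Proposition~\ref{prop-Norm3Conditions} on the square-free part to derive the contradiction. The only difference is that you spell out the eigenvalue count and the identification of $r$ with the square-free part of the norm, which the paper leaves implicit.
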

\begin{proof}
Suppose that a Barba matrix $B$ of order $n$ exists, then $\det(M)\in\Z[\omega]$ and,
  \begin{equation}
    N(\det(M))=\det(M)\overline{\det(M)} = (2n-1)(n-1)^{n-1}=s^2r.
  \end{equation}
By Proposition \ref{prop-Norm3Conditions}, if $p\equiv 2\pmod{3}$ is a prime number dividing $r$, we have a contradiction; so a Barba matrix of order $n$ over the third roots cannot exist.
\end{proof}

By Theorem \ref{thm-NonExistenceBarba3}, the first few orders $n\equiv 1\pmod{3}$ at which a Barba matrix over the third roots cannot exist are:
\[n=16, 28, 34, 43, 46, 52, 58, 70, 73, 88, 94, 100, 103, 106, 118, 124, 127, 133, 136, 142, 148, \dots
\]
For example, $n=16$ fails since in this case $(2n-1)(n-1)^{n-1}=s^2(31\cdot 5\cdot 3)$ for some integer $s$, and the prime factor $5$ is congruent to $2$ modulo $3$.\\

For orders $n<19$, examples of Barba matrices over the third roots can be obtained from a symmetric 2-design, or a strongly-regular graph.\\

The matrix,
\begin{equation}
    B_4 = \begin{bmatrix}
        \omega & 1 & 1 & 1\\
        1 & \omega & 1 & 1\\
        1 & 1 & \omega & 1\\
        1 & 1 & 1 & \omega
    \end{bmatrix}
\end{equation}
is a Barba matrix, associated to the trivial $2$-design in $4$-points. Let $D$ be an incidence matrix of the Fano plane, then $B_7=(\omega-1)D + J_7$ is a Barba matrix of order $7$, for example:
\begin{equation}
B_7=
\begin{bmatrix}
    1&\omega&\omega&1&\omega&1&1\\
    1&1&\omega&\omega&1&\omega&1\\
    1&1&1&\omega&\omega&1&\omega\\
    \omega&1&1&1&\omega&\omega&1\\
    1&\omega&1&1&1&\omega&\omega\\
    \omega&1&\omega&1&1&1&\omega\\
    \omega&\omega&1&\omega&1&1&1
\end{bmatrix}.
\end{equation}
It can be shown (Corollary 5.3.1. \cite{Ponasso-Thesis}) that, up to equivalence, these two are the only Barba matrices over the third roots having exactly two distinct entries. Likewise, it can be shown (Corollary 5.3.2. \cite{Ponasso-Thesis}) that the only two strongly regular graphs whose Bose-Mesner algebra supports a Barba matrix over the third roots are the Petersen graph and the Paley graph of order $13$. The associated Barba matrices\textemdash written in logarithmic notation\textemdash are:

\begin{equation}
{\tiny
    B_{10} = \left[
\begin{array}{*{10}{c}}
0&2&1&2&1&1&2&2&2&2\\
2&0&2&2&2&1&2&1&1&2\\
1&2&0&1&2&2&2&2&1&2\\
2&2&1&0&2&2&2&1&2&1\\
1&2&2&2&0&2&1&1&2&2\\
1&1&2&2&2&0&2&2&2&1\\
2&2&2&2&1&2&0&2&1&1\\
2&1&2&1&1&2&2&0&2&2\\
2&1&1&2&2&2&1&2&0&2\\
2&2&2&1&2&1&1&2&2&0
\end{array}\right],
    B_{13}=\left[\begin{array}{*{13}{c}}
0&1&2&2&1&2&2&1&1&1&1&2&2\\
1&0&2&2&2&1&1&1&1&2&2&1&2\\
2&2&0&1&2&1&2&2&1&1&1&1&2\\
2&2&1&0&2&2&1&1&1&1&2&2&1\\
1&2&2&2&0&2&1&2&2&1&1&1&1\\
2&1&1&2&2&0&2&1&2&2&1&1&1\\
2&1&2&1&1&2&0&2&1&2&2&1&1\\
1&1&2&1&2&1&2&0&2&1&2&2&1\\
1&1&1&1&2&2&1&2&0&2&1&2&2\\
1&2&1&1&1&2&2&1&2&0&2&1&2\\
1&2&1&2&1&1&2&2&1&2&0&2&1\\
2&1&1&2&1&1&1&2&2&1&2&0&2\\
2&2&2&1&1&1&1&1&2&2&1&2&0
\end{array}\right].
}
\end{equation}

At the time of writing, the smallest open case allowed by Theorem \ref{thm-NonExistenceBarba3} is $n=19$.

\subsection{Maximal determinant matrices at orders \texorpdfstring{$n\equiv 2\pmod{3}$}{n ≡ 2 (mod 3)}}

In this subsection, we develop tools to give certificates of maximality of putative maximal determinant matrices of order $n\equiv 2\pmod{3}$. At orders $n\equiv 2\pmod{3}$, the bounds of Theorem \ref{thm-Hadamard} and Corollary \ref{cor-Barba3} cannot be met with equality, so computational approachs similar to the ones developed in \cite{Moyssiadis-Kounias} and \cite{Orrick-15} are needed.\\

Let $\tilde{X}$ be a candidate maximal determinant matrix over the third roots, and let $\tilde{G}=\tilde{X}\tilde{X}^*$ be its Gram matrix.
The algorithm we will describe is a backtracking search for the set of all putative Gram matrices $G=XX^*$ of matrices $X$ with entries in the set $\{1,\omega,\omega^2\}$ satisfying $\det(G)\geq\det(\tilde{G})$. In particular, we will begin with a set $\mathcal{G}_1=\{[n]\}$ of $1\times 1$ principal submatrices of Gram matrices, and at the $r$-th step of the algorithm we will construct a set $\mathcal{G}_r$ of principal $r\times r$ submatrices from the set $\mathcal{G}_{r-1}$: this is done by considering all feasible extensions of a matrix $G\in\mathcal{G}_{r-1}$ of the type
\begin{equation}G_f:=\left[\begin{array}{c|c}
    G & f\\
    \hline
    f^* & n
    \end{array}\right],
\end{equation}
where $f$ is a vector of length $r-1$. If the final set of candidate matrices $\mathcal{G}:=\mathcal{G}_n$ contains no matrices $G$ with determinant strictly greater than that of $\tilde{G}$, then $\tilde{X}$ is of maximal determinant.\\

In order to reduce the size of the backtracking search tree, we carry some pruning steps. First, we can ``normalize'' matrices over the third-roots, to reduce the number of possible off-diagonal entries to consider.

\begin{definition} Let $v$ be a vector of length $n$ with entries in $\{1,\omega,\omega^2\}$, and let $v_1$, $v_{\omega}$ and $v_{\omega^2}$ the number of entries in $v$ equal to $1$, $\omega$, and $\omega^2$, respectively. We say that $v$ is \textit{balanced} if  $v_{\omega}\equiv v_{\omega^2}\pmod{3}$. We say that a square matrix of order $n$ with entries in $\{1,\omega,\omega^2\}$ is \textit{balanced} if all of its row vectors and column vectors are balanced.
\end{definition}

\begin{lemma}\label{lemma-BalancedVectors}
  Let $v$ be a balanced vector, and let $\Delta=\diag(\omega^{e_1},\dots,\omega^{e_n})$ where $\sum_{i=1}^n e_i \equiv 0\pmod{3}$. Then, the vector $\Delta v$ is also balanced.
\end{lemma}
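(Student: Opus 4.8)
The plan is to track how multiplication by $\Delta$ changes the multiset of exponents in the logarithmic form of $v$, and to show that ``balanced'' is preserved. First I would write $v$ in logarithmic notation: let $v = (\omega^{a_1}, \dots, \omega^{a_n})$ with $a_j \in \{0,1,2\}$, so that $v_1 = \#\{j : a_j \equiv 0\}$, $v_\omega = \#\{j : a_j \equiv 1\}$, $v_{\omega^2} = \#\{j : a_j \equiv 2\}$, and the balanced condition reads $v_\omega \equiv v_{\omega^2} \pmod 3$. A clean way to encode this: consider the sum $S(v) := \sum_{j=1}^n a_j \pmod 3$, computed with each $a_j$ taken as its representative in $\{0,1,2\}$. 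Then $S(v) \equiv v_\omega + 2 v_{\omega^2} \equiv v_\omega - v_{\omega^2} \pmod 3$, so $v$ is balanced if and only if $S(v) \equiv 0 \pmod 3$. This reduces the whole lemma to a statement about a single residue.

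Next I would compute $S(\Delta v)$. The entries of $\Delta v$ are $\omega^{a_j + e_j}$, so in logarithmic form the exponent vector becomes $(a_1 + e_1, \dots, a_n + e_n)$ taken mod $3$. Hence
\begin{equation}
S(\Delta v) \equiv \sum_{j=1}^n (a_j + e_j) = \sum_{j=1}^n a_j + \sum_{j=1}^n e_j \equiv S(v) + 0 \equiv 0 \pmod 3,
\end{equation}
using the hypothesis $\sum_j e_j \equiv 0 \pmod 3$ and the assumption that $v$ is balanced, i.e. $S(v) \equiv 0$. By the equivalence established in the first step, $S(\Delta v) \equiv 0 \pmod 3$ means exactly that $\Delta v$ is balanced, which is the claim.

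The only mild subtlety — and the one place to be careful rather than a genuine obstacle — is the reduction ``balanced $\iff S(v) \equiv 0 \pmod 3$'': one must note that the quantity $S(v)$ is well-defined modulo $3$ regardless of which integer representatives one picks for the exponents $a_j$ and $e_j$, since changing a representative by a multiple of $3$ does not change the sum mod $3$; and one must verify the arithmetic identity $v_1 \cdot 0 + v_\omega \cdot 1 + v_{\omega^2} \cdot 2 \equiv v_\omega - v_{\omega^2} \pmod 3$, which is immediate from $2 \equiv -1$. Everything else is a one-line computation, so I expect no real difficulty; the lemma is essentially the observation that the ``total exponent mod $3$'' is an invariant that is shifted by $\sum_j e_j \bmod 3$ under the action of $\Delta$, and the hypothesis makes that shift vanish.
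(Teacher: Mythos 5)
Your proof is correct, and it takes a genuinely different and noticeably cleaner route than the paper's. The paper proves the lemma by a direct count: it puts $u=(\omega^{e_1},\dots,\omega^{e_n})^{\intercal}$ and $v$ into a normal form with nine blocks indexed by the pairs of entry values, writes the balanced hypothesis on $v$ and the hypothesis $\sum_i e_i\equiv 0\pmod 3$ as two congruences among the block sizes, and then manipulates those congruences to verify $(\Delta v)_{\omega}\equiv(\Delta v)_{\omega^2}\pmod 3$ directly. You instead observe that the balanced condition $v_{\omega}\equiv v_{\omega^2}\pmod 3$ is equivalent to the vanishing mod $3$ of the single invariant $S(v)=\sum_j a_j$, the total exponent in logarithmic form (equivalently, the condition that the product of the entries of $v$ equals $1$), and that multiplying by $\Delta$ shifts this invariant by exactly $\sum_i e_i$. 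Both the well-definedness of $S$ modulo $3$ and the identity $S(v)\equiv v_{\omega}-v_{\omega^2}\pmod 3$ are verified correctly, so the argument is complete. What your approach buys is transparency: it makes clear that the hypothesis $\sum_i e_i\equiv 0\pmod 3$ is precisely the condition under which the invariant is preserved, and it would also streamline the uniqueness argument in the subsequent lemma on balanced matrices, since for any $v$ there is exactly one $i$ with $S(\omega^i v)\equiv 0\pmod 3$. The paper's block decomposition, by contrast, is set up once and reused for Lemma \ref{lemma-BalancedInnerProducts} on inner products of balanced vectors, which is presumably why the author keeps that heavier machinery; your invariant alone does not immediately give that second lemma.
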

\begin{proof} Let $u=(\omega^{e_1},\dots,\omega^{e_n})^{\intercal}$, then the product $\Delta v$ is equal to the entry-wise product $u\circ v$ of $u$ and $v$. Without loss of generality, we may assume that $u$ and $v$ have the following form:
  \begin{equation}\label{eq-uvWlog}
    \begin{split}
      v &= [\mathbf{1}_{v_1} | \omega\mathbf{1}_{v_\omega} | \omega^2\mathbf{1}_{v_{\omega}^2}]; \text{ and }\\
      u &= [\mathbf{1}_{a_1}|\omega\mathbf{1}_{a_{\omega}}|\omega^2\mathbf{1}_{a_{\omega^2}}|\mathbf{1}_{b_1}|\omega\mathbf{1}_{b_{\omega}}|\omega^2\mathbf{1}_{b_{\omega^2}}|\mathbf{1}_{c_1}|\omega\mathbf{1}_{c_{\omega}}|\omega^2\mathbf{1}_{c_{\omega^2}}],
    \end{split}
  \end{equation}
  where $a_1+a_{\omega}+a_{\omega^2}=v_1$, $b_1+b_{\omega}+b_{\omega^2}=v_{\omega}$, and $c_1+c_{\omega}+c_{\omega^2}=v_{\omega^2}$. Since $v$ is balanced, we have that
  \begin{equation}\label{eq-vBalanced}
    b_1+b_{\omega}+b_{\omega^2}\equiv c_1+c_{\omega}+c_{\omega^2}\pmod{3},
  \end{equation}
  and from the hypothesis $\sum_{i=1}^n e_i\equiv 0\pmod{3}$, we find
  \begin{equation}
    0(a_1+b_1+c_1)+1(a_{\omega}+b_{\omega}+c_{\omega})+2(a_{\omega^2}+b_{\omega^2}+c_{\omega^2})\equiv 0\pmod{3}. 
  \end{equation}
  From this, it follows that
  \begin{equation}\label{eq-uHypothesis}
    a_{\omega}+b_{\omega}+c_{\omega} \equiv a_{\omega^2}+b_{\omega^2}+c_{\omega^2}\pmod{3}.
  \end{equation}
  Now, combining Equation \ref{eq-vBalanced} with Equation \ref{eq-uHypothesis}, we find that
  \begin{equation}
    \begin{split}
      (\Delta v)_{\omega}&=a_{\omega}+b_1+c_{\omega^2}\\
      &\equiv a_{\omega}+(c_1+c_{\omega}+c_{\omega^2}-b_{\omega}-b_{\omega^2})+c_{\omega^2}\\
      &\equiv a_{\omega}-b_{\omega}-b_{\omega^2}+c_1+c_{\omega}-c_{\omega^2}\\
      &\equiv (a_{\omega^2}+b_{\omega^2}+c_{\omega^2}-b_{\omega}-c_{\omega})-b_{\omega}-b_{\omega^2}+c_1+c_{\omega}-c_{\omega^2}\\
      &\equiv a_{\omega^2}+b_{\omega}+c_1  \pmod{3}.
    \end{split}
  \end{equation}
  Since $(\Delta_v)_{\omega^2}=a_{\omega^2}+b_{\omega}+c_1$, this shows that $(\Delta_v)_{\omega}\equiv (\Delta_v)_{\omega^2}\pmod{3}$, i.e. $\Delta v$ is balanced.\qedhere
\end{proof}

\begin{lemma}\label{lemma-BalancedMatrices} Let $M$ be a matrix of order $n\equiv 1,2\pmod{3}$ with entries in the set $\{1,\omega,\omega^2\}$. Then, there exists a unique pair of diagonal matrices $(\Delta_1,\Delta_2)$ with diagonal entries in $\{1,\omega,\omega^2\}$ such that $\Delta_1 M\Delta_2$ is balanced.
\end{lemma}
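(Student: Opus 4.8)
The plan is to pass to the logarithmic form and reduce the statement to elementary linear algebra over $\F_3$. Write $M_{ij}=\omega^{A_{ij}}$, where $A=(A_{ij})$ is the logarithmic form of $M$, viewed as a matrix over $\F_3$; left multiplication by $\diag(\omega^{r_1},\dots,\omega^{r_n})$ and right multiplication by $\diag(\omega^{c_1},\dots,\omega^{c_n})$ replace $A_{ij}$ by $A_{ij}+r_i+c_j$ in $\F_3$. The first step is the observation that a vector $v\in\mu_3^n$ with logarithmic form $a=(a_1,\dots,a_n)$ is balanced if and only if $\sum_{j}a_j\equiv 0\pmod 3$: an entry $a_j$ equal to $0,1,2$ contributes respectively $0,+1,-1$ to $v_\omega-v_{\omega^2}$, and these values are congruent to $a_j$ mod $3$, so $v_\omega-v_{\omega^2}\equiv\sum_j a_j\pmod 3$, and being balanced is precisely the vanishing of this quantity. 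Thus the task is to find $(r_i),(c_j)\in\F_3^n$ so that every row sum and every column sum of $(A_{ij}+r_i+c_j)$ is zero in $\F_3$.

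For existence I would build $\Delta_1$ and then $\Delta_2$ explicitly. Let $R_i=\sum_j A_{ij}$. Since $n\equiv 1$ or $2\pmod 3$, $n$ is a unit of $\F_3$, so set $r_i:=-n^{-1}R_i$ and $\Delta_1:=\diag(\omega^{r_1},\dots,\omega^{r_n})$; by the identity above every row of $\Delta_1 M$ is balanced. Let $\widetilde C_j$ be the $j$-th column sum of the logarithmic form of $\Delta_1 M$, set $c_j:=-n^{-1}\widetilde C_j$ and $\Delta_2:=\diag(\omega^{c_1},\dots,\omega^{c_n})$; then every column of $\Delta_1 M\Delta_2$ is balanced. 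The rows stay balanced because $\sum_j\widetilde C_j$ equals the total sum of the logarithmic entries of $\Delta_1 M$, which is $\sum_i(\text{row sum})\equiv 0\pmod 3$, whence $\sum_j c_j\equiv 0\pmod 3$ and Lemma \ref{lemma-BalancedVectors} (applied to each row, or a one-line direct check using that the row sums of the logarithmic form of $\Delta_1 M$ are already $0$) shows right multiplication by $\Delta_2$ preserves row-balance. Hence $\Delta_1 M\Delta_2$ is balanced.

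For uniqueness, suppose $(\Delta_1',\Delta_2')$ also works; writing $\Delta_1'\Delta_1^{-1}=\diag(\omega^{s_1},\dots,\omega^{s_n})$ and $\Delta_2^{-1}\Delta_2'=\diag(\omega^{t_1},\dots,\omega^{t_n})$, the logarithmic form of $\Delta_1'M\Delta_2'$ differs from that of the balanced matrix $B=\Delta_1 M\Delta_2$ by $s_i+t_j$ in position $(i,j)$. Row-balance of both matrices forces $n s_i+\sum_j t_j\equiv 0\pmod 3$ for all $i$, so all $s_i$ are equal, say to $s$; then column-balance forces $n t_j+ns\equiv 0\pmod 3$, so all $t_j$ equal $-s$. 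Therefore $(\Delta_1',\Delta_2')=(\omega^{s}\Delta_1,\omega^{-s}\Delta_2)$ and $\Delta_1'M\Delta_2'=\Delta_1 M\Delta_2$: the balanced matrix is unique, and the pair itself becomes unique after any normalization killing the scalar (for instance, requiring the $(1,1)$-entry of $\Delta_1$ to equal $1$, which forces $s=0$).

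The main point to be careful about — and which I would state explicitly rather than gloss over — is exactly this scalar ambiguity: a valid pair $(\Delta_1,\Delta_2)$ is determined only up to $(\Delta_1,\Delta_2)\mapsto(\lambda\Delta_1,\lambda^{-1}\Delta_2)$ with $\lambda\in\mu_3$, since such a scaling does not change the product; hence ``unique pair'' has to be read in the normalized sense above, whereas the balanced matrix $\Delta_1 M\Delta_2$ is genuinely unique. Everything else is routine bookkeeping over $\F_3$, the invertibility of $n$ modulo $3$ being the sole place where the hypothesis $n\not\equiv 0\pmod 3$ enters.
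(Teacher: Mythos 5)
Your proof is correct, and the existence half is essentially the paper's argument in logarithmic clothing: the observation that a vector is balanced iff its logarithmic entry sum vanishes in $\F_3$ is exactly equivalent to the paper's residue-triple observation (that for $n\not\equiv 0\pmod 3$ each vector has a unique balancing scalar $\omega^i$), and both proofs then choose $\Delta_1$ row by row, $\Delta_2$ column by column, and verify that the rows stay balanced via $\sum_j c_j\equiv 0\pmod 3$ and Lemma \ref{lemma-BalancedVectors}. Where you genuinely depart from the paper is the uniqueness discussion, and there you are right on a point the paper glosses over: since $(\lambda\Delta_1)M(\lambda^{-1}\Delta_2)=\Delta_1M\Delta_2$ for $\lambda\in\mu_3$, the \emph{pair} is only unique up to this scalar, so the lemma as literally stated is slightly too strong. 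The paper's proof only shows that $\Delta_1$ is the unique diagonal matrix making every row of $\Delta_1M$ balanced; it does not rule out a pair $(\Delta_1',\Delta_2')$ with $\Delta_1'M\Delta_2'$ balanced even though the rows of $\Delta_1'M$ alone are not, and your explicit computation ($s_i\equiv s$, $t_j\equiv -s$) shows precisely that such pairs exist and are exactly the scalar rescalings. Your reformulation \textemdash{} the balanced matrix $\Delta_1M\Delta_2$ is genuinely unique, the pair only after a normalization \textemdash{} is the statement actually needed downstream: in Corollary \ref{cor-Equivalence} one still concludes $X_1=PX_2Q$ because $\Delta_1(PX_2Q)\Delta_2=PX_2Q$ whenever $(\Delta_1,\Delta_2)=(\omega^sI_n,\omega^{-s}I_n)$, even though $\Delta_1=\Delta_2=I_n$ need not hold. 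So nothing breaks, but your version is the one that should be recorded.
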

\begin{proof} We begin with the following observation: Let $(v_{1},v_{\omega},v_{\omega^2})\in \Z^3$ be a triple, then its unordered list  of residue classes modulo $3$, $[v_{1}\pmod{3},v_{\omega}\pmod{3},v_{\omega^2}\pmod{3}]$, is one of the following:
  \[
  \begin{array}{ccc}
    000 & 001 & 002\\
    012 & 022 & 011\\
    111 & 112 & 122\\
    222
    \end{array}
  \]
  Under the assumption that $v_1+v_{\omega}+v_{\omega^2}=n\equiv 1,2\pmod{3}$, the list of residue classes belongs to the second or third columns above. Furthermore, we see that every list in the second and third column has exactly two identical elements. This implies that for an arbitrary vector $v$ of length $n\equiv 1,2\pmod{3}$, there exists a unique $i$ such that the vector $\omega^i v$ is balanced.\\
  Let $M$ be a matrix of order $n$; by the observation above, there is a unique diagonal matrix $\Delta_1$ such that every row of $\Delta_1 M$ is balanced. Likewise, there is a unique diagonal matrix $\Delta_2$ such that all the columns of $\Delta_1M\Delta_2$ are balanced. It suffices to show that all the rows of $\Delta_1M\Delta_2$ are balanced: To see this, let $N:=\Delta_1M$ and let $N_{1}$, $N_{\omega}$, and $N_{\omega^2}$ be the total number of entries of $N$ equal to $1$, $\omega$, and $\omega^2$, respectively. Since all rows of $N$ are balanced, $N_{\omega}\equiv N_{\omega^2}\pmod{3}$. Let $c_1,\dots,c_n$ be the columns of $N$, and define $e_i\in\{0,1,2\}$ by $e_i\equiv (c_i)_{\omega}-(c_i)_{\omega^2}\pmod{3}$, then
  \begin{equation}                                                         
    \sum_{i=1}^n e_i \equiv \sum_{i=1}^n\left[(c_i)_{\omega}-(c_i)_{\omega^2}\right]=N_{\omega}-N_{\omega^2}\equiv 0\pmod 3.                          
    \end{equation}                                                         
  Since $\Delta_2$ is unique we must have $\Delta_2=\diag(\omega^{e_1},\dots,\omega^{e_n})$, and by Lemma \ref{lemma-BalancedVectors} we find that every row of $N\Delta_2=\Delta_1M\Delta_2$ is balanced.\qedhere
\end{proof}

\begin{corollary}\label{cor-Equivalence} Let $X_1$ and $X_2$ be two balanced matrices of order $n$. Then, $X_1$ and $X_2$ are equivalent under the action of the group of $n\times n$ monomial matrices over the third roots if and only if $X_1$ and $X_2$ are permutation-equivalent.
\end{corollary}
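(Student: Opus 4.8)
The \emph{if} direction is immediate, so I will concentrate on the converse. If $X_{2}=PX_{1}Q$ with $P,Q$ permutation matrices, then taking $\Delta_{1}=\Delta_{2}=I_{n}$ in Definition~\ref{def-MonomialEquivalence} (and noting that $Q^{\intercal}$ is again a permutation matrix, so $X_{2}$ has the required form $(\Delta_1 P)X_1(\Delta_2 Q^{\intercal})^{*}$) exhibits $X_{1}$ and $X_{2}$ as monomially equivalent.

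For the forward direction, suppose $X_{2}=(\Delta_{1}P)X_{1}(\Delta_{2}Q)^{*}$ as in Definition~\ref{def-MonomialEquivalence}. Since $Q$ is a permutation matrix, $(\Delta_{2}Q)^{*}=Q^{\intercal}\Delta_{2}^{*}$, and $\Delta_{2}^{*}$ is again diagonal with entries in $\mu_{3}$; setting $Y:=PX_{1}Q^{\intercal}$ we obtain $X_{2}=\Delta_{1}\,Y\,\Delta_{2}^{*}$. The matrix $Y$ arises from $X_{1}$ by permuting rows and columns, which merely rearranges them and hence preserves the multiset of entries of each row and each column; since $X_{1}$ is balanced, so is $Y$. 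Thus the task reduces to the claim: \emph{if $Y$ and $Z$ are balanced matrices of order $n$ with $\gcd(n,3)=1$ and $Z=\Delta_{1}Y\Delta_{2}$ for diagonal $\Delta_{1},\Delta_{2}$ with entries in $\mu_{3}$, then $Z=Y$.} Applying this with $Z=X_{2}$ and $\Delta_{2}$ replaced by $\Delta_{2}^{*}$ gives $X_{2}=Y=PX_{1}Q^{\intercal}$, so $X_{1}$ and $X_{2}$ are permutation-equivalent.

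To prove the claim I would write $Y_{ij}=\omega^{L_{ij}}$, $\Delta_{1}=\diag(\omega^{a_{1}},\dots,\omega^{a_{n}})$, $\Delta_{2}=\diag(\omega^{b_{1}},\dots,\omega^{b_{n}})$, and use the observation underlying Lemmas~\ref{lemma-BalancedVectors} and~\ref{lemma-BalancedMatrices}: a length-$n$ vector $(\omega^{m_{1}},\dots,\omega^{m_{n}})$ is balanced iff $m_{1}+\cdots+m_{n}\equiv 0\pmod{3}$. The log-matrix of $Z$ is $L'_{ij}=a_{i}+L_{ij}+b_{j}$; summing over $j$ and using $\sum_{j}L_{ij}\equiv 0$ (balancedness of $Y$), the balancedness of $Z$ forces $na_{i}+\sum_{j}b_{j}\equiv 0\pmod{3}$ for every $i$, while summing over $i$ forces $\sum_{i}a_{i}+nb_{j}\equiv 0\pmod{3}$ for every $j$. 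Since $\gcd(n,3)=1$, these equations force all $a_{i}$ equal to a common value $c$ and all $b_{j}$ equal to a common value $d$, with $n(c+d)\equiv 0$ and hence $c+d\equiv 0\pmod{3}$; then $L'_{ij}\equiv L_{ij}\pmod{3}$, i.e. $Z=Y$. Alternatively one may deduce the claim directly from the uniqueness assertion in Lemma~\ref{lemma-BalancedMatrices}, observing that since $Y$ is itself balanced its normalizing diagonal pair is trivial.

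The only genuine subtlety is the scalar ambiguity $(\Delta_{1},\Delta_{2})\mapsto(\omega^{c}\Delta_{1},\omega^{-c}\Delta_{2})$, which leaves the product $\Delta_{1}Y\Delta_{2}$ unchanged; this is why the claim must be phrased as $Z=Y$ rather than $\Delta_{1}=\Delta_{2}=I_{n}$, and it is harmless precisely because such a replacement does not alter the matrix. Everything else is routine bookkeeping, and the hypothesis $\gcd(n,3)=1$ (that is, $n\equiv 1,2\pmod{3}$) is essential, as $Y=J_{n}$, $Z=\omega J_{n}$ with $3\mid n$ shows.
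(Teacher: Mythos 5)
Your proof is correct and follows the same overall strategy as the paper's: absorb the permutations into one side so that the question reduces to whether two balanced matrices related by a pure diagonal equivalence must coincide. Where you diverge is in how that reduced claim is settled, and your version is actually the more careful one. The paper's proof simply invokes the uniqueness assertion of Lemma~\ref{lemma-BalancedMatrices} to conclude $\Delta_1=\Delta_2=I_n$; but, as your closing remark correctly observes, that uniqueness holds only up to the replacement $(\Delta_1,\Delta_2)\mapsto(\omega^{c}\Delta_1,\omega^{-c}\Delta_2)$ (e.g.\ $(\omega I_4,\omega^2 I_4)$ also leaves $J_4$ balanced), so the literal conclusion $\Delta_1=\Delta_2=I_n$ is too strong. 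Your explicit log-coordinate computation shows exactly what is true\textemdash{}all $a_i$ equal to some $c$, all $b_j$ equal to $-c$, hence $\Delta_1 Y\Delta_2=Y$\textemdash{}which is all the corollary needs, and it makes transparent where the hypothesis $n\equiv 1,2\pmod 3$ enters (your counterexample $J_n$ versus $\omega J_n$ for $3\mid n$ confirms it is not removable; the paper leaves this hypothesis implicit in the corollary's statement). In short: same route, but your argument patches a small imprecision in the paper's Lemma~\ref{lemma-BalancedMatrices} and its use here, at the cost of a short direct calculation rather than a one-line citation.
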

\begin{proof}
  Suppose that there exists a pair of permutation matrices $(P,Q)$ and a pair of diagonal matrices $(\Delta_1,\Delta_2)$ with diagonal entries in $\{1,\omega,\omega^2\}$ such that
  \begin{equation}
    X_1= \Delta_1PX_2Q\Delta_2.
  \end{equation}
  Then, since the matrix $PX_2Q$ is balanced, Lemma \ref{lemma-BalancedMatrices} implies that $\Delta_1=\Delta_2=I_n$. Therefore $X_1=PX_2Q$, i.e. $X_1$ and $X_2$ are permutation-equivalent. \qedhere
\end{proof}

\begin{remark} In view of Lemma \ref{lemma-BalancedMatrices} and Corollary \ref{cor-Equivalence}, we may restrict our search to Gram matrices of balanced matrices up to permutation equivalence: Let $X_1$ and $X_2$ be balanced matrices of the same order $n$, and let $G_1=X_1X_1^*$, $G_2=X_2X_2^*$ be their respective Gram matrices. If $X_1$ and $X_2$ are permutation-equivalent, then there exists a pair $(P,Q)$ of permutation matrices such that $X_1=PX_2Q$. Therefore
  \begin{equation}
    G_1= X_1X_1^*= (PX_2Q)(PX_2Q)^*=PX_2X_2^*P^{\intercal}=PG_2P^{\intercal}.
  \end{equation}
  So $G_1$ can be obtained from $G_2$ by a series of symmetric row-column permutations.
\end{remark}

\begin{remark} The equivalence of Hermitian matrices under symmetric row-column permutations can be expressed as a graph isomorphism problem, and tested computationally using the \text{C} library \texttt{nauty} \cite{McKay-GraphIsoII}: Let $\mathcal{G}$ be a set of $k\times k$ Hermitian matrices with entries taken from a set $\Phi=\{g_1,\dots,g_{f}\}$. For every $G\in\mathcal{G}$, we define a vertex-colored graph $\mathcal{P}(G)$ in the following way:
  \begin{itemize}
  \item $\mathcal{P}(G)$ has a total of $2k+f+k^2$ vertices: the first $k$ vertices $\{r_1,\dots,r_k\}$ correspond to each of the rows of $G$; the next $k$ vertices $\{c_1,\dots,c_k\}$ to each of the columns of $G$; the following $f$ vertices $\{g_1,\dots,g_f\}$ correspond to each of the possible distinct entries of $G\in\mathcal{G}$; and the remaining $k^2$ entries correspond to each of the possible pairs $(i,j)$, $1\leq i,j\leq k$.
  \item Every vertex $(i,j)$ is joined by an edge to $r_i$ and $c_j$, and $(i,j)$ is joined by an edge to $g_r$ if and only if $G_{ij}=g_r$.
  \item The color classes of the graph $\mathcal{P}(G)$ are $\{r_1,\dots,r_k\}$, $\{c_1,\dots,c_k\}$, $\{(i,j):1\leq i,j\leq k\}$ and $\{g_r\}$ for $1\leq r\leq k$.
  \end{itemize}
  See Figure \ref{fig-PermutationGraphExample} for an example of a graph $\mathcal{P}(G)$.
\end{remark}

\begin{figure}
  \centering
  \begin{tikzpicture}\def\yshift{1.25}

     \node at (-6,0.5) {$ G=
        \left[ \begin{matrix}
            \textcolor{orange}{\boldsymbol{n}} & \textcolor{magenta}{\boldsymbol{-1-3\omega}} \\
            \textcolor{green}{\boldsymbol{2+3\omega}} & \textcolor{orange}{\boldsymbol{n}} \\
        \end{matrix} \right]
        $};

     \draw[thick,->, line width=1mm] (-3,0.5) --  (-1.5,0.5)
      node[midway, above=2mm] {$\mathcal{P}(\cdot)$};
    
    % DRAW ROW NODES
    \node[draw, fill=red, scale=1.5, label=left:{$r_1$}](r1) at (0,1) {};
    \node[draw, fill=red, scale=1.5, label=left:{$r_2$}](r2) at (0,0) {};

    % DRAW COLUMN NODES
    \node[draw, regular polygon, regular polygon sides=3, scale=0.8, fill=blue, label=above:{$c_1$}](c1) at (2,3) {};
    \node[draw, regular polygon, regular polygon sides=3, scale=0.8, fill=blue, label=above:{$c_2$}](c2) at (3,3) {};

    % DRAW COLOR NODES
    \node[draw, circle, fill=orange, scale=1.2, label=right:{$n$}](n) at (5,\yshift+1){};
    \node[draw, regular polygon, regular polygon sides=6, scale=1.2, fill=cyan, label=right:{$-1$}](g1) at (5,\yshift){};
    \node[draw, diamond, scale=1, fill=magenta, label=right:{$-1-3\omega$}](g2) at (5,\yshift -1){};
    \node[draw, regular polygon, regular polygon sides=7, scale=1.2, fill=green, label=right:{$2+3\omega$}] (g3) at (5,\yshift -2){};

    % DRAW GRID
    \foreach \i in {1,2} {
      \foreach \j in {1,2} {
        \node[draw, fill=black, circle, scale=0.5](p\i\j) at (1+\i,-1+\j) {};  % Create circle nodes at (i,j)
      }
    }

    % Warning: the labels pij do not correspond to the matrix indexing!
    \draw[thick] (p11) to (r2); % p11 = G_21
    \draw[thick, bend left] (p11) to (c1); 
    \draw[thick] (p12) to (r1); % p12 = G_11
    \draw[thick] (p12) to (c1);
    \draw[thick, bend left] (p21) to (r2); % p21 = G_22
    \draw[thick, bend right] (p21) to (c2);
    \draw[thick, bend right] (p22) to (r1); % p22 = G_12
    \draw[thick] (p22) to (c2);

    \draw[thick, bend left] (p12) to (n);
    \draw[thick, bend right] (p21) to (n);

    \draw[thick, bend right] (p11) to (g3);
    \draw[thick, bend left] (p22) to (g2);
    
  \end{tikzpicture}
  \caption{An example of a graph $\mathcal{P}(G)$ obtained from a matrix $G\in \mathcal{G}$, where $\mathcal{G}$ is a set of $2\times 2$ Hermitian matrices with entries in $\Phi=\{n,-1,-1-3\omega,2+3\omega\}$. Notice that in the rightmost column of nodes we have included the node labeled $-1$, even if this entry does not appear in the matrix $G$.}\label{fig-PermutationGraphExample}
\end{figure}
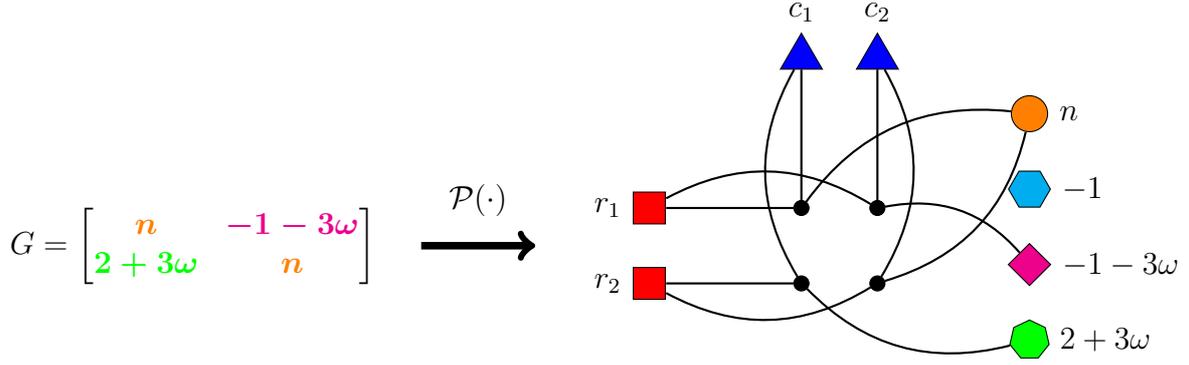

 The following lemma reduces the number of off-diagonal entries we need to consider:

\begin{lemma}\label{lemma-BalancedInnerProducts} Let $u$ and $v$ be two balanced vectors of length $n$. Then
  \begin{equation}
    u\cdot v = a+3b\omega,
  \end{equation}
  with $a,b\in \Z$ and $a\equiv n\pmod{3}$.
\end{lemma}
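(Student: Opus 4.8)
The plan is to reduce the statement to a single congruence modulo $3$ recording how often the pointwise products $u_k v_k$ take each value in $\mu_3$. (If $u\cdot v$ is intended as the Hermitian pairing $\sum_k u_k\overline{v_k}$ rather than $\sum_k u_k v_k$, nothing changes: $\overline{v}$ is balanced whenever $v$ is, since conjugation swaps the multiplicities $v_\omega$ and $v_{\omega^2}$.) Writing $c_j$ for the number of indices $k$ with $u_k v_k=\omega^j$ for $j=0,1,2$, we have $c_0+c_1+c_2=n$ and $u\cdot v=c_0+c_1\omega+c_2\omega^2$. Substituting $\omega^2=-1-\omega$ gives $u\cdot v=(c_0-c_2)+(c_1-c_2)\omega$, so with $a:=c_0-c_2$ the two assertions to be proved — that $u\cdot v\in a+3\Z\omega$, and that $a\equiv n\pmod 3$ — both amount to the single claim $c_1\equiv c_2\pmod 3$; for the second one notes $a-n=-c_1-2c_2\equiv-(c_1-c_2)\pmod 3$.

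The crux I would isolate is a product criterion for balancedness: a vector $w\in\{1,\omega,\omega^2\}^n$ is balanced if and only if $\prod_k w_k=1$. This is immediate, since $w$ has $w_\omega$ entries equal to $\omega$ and $w_{\omega^2}$ entries equal to $\omega^2$ (the rest equal to $1$), so $\prod_k w_k=\omega^{\,w_\omega+2w_{\omega^2}}$, and $w_\omega+2w_{\omega^2}\equiv w_\omega-w_{\omega^2}\pmod 3$, which vanishes modulo $3$ precisely when $w$ is balanced. Applying this to $u$, to $v$, and to the pointwise product vector $w=(u_k v_k)_k$, and using multiplicativity $\prod_k(u_k v_k)=\bigl(\prod_k u_k\bigr)\bigl(\prod_k v_k\bigr)$, I get $\prod_k(u_k v_k)=1\cdot 1=1$. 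On the other hand $\prod_k(u_k v_k)=1^{c_0}\,\omega^{c_1}\,(\omega^2)^{c_2}=\omega^{c_1+2c_2}=\omega^{c_1-c_2}$, hence $\omega^{c_1-c_2}=1$, i.e. $c_1\equiv c_2\pmod 3$. Combined with the first paragraph this proves the lemma, with $b=(c_1-c_2)/3$.

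I do not expect a genuine obstacle: once the product-of-entries reformulation of balancedness is spotted, the argument is only a few lines. The sole points needing care are bookkeeping — checking via $1+\omega+\omega^2=0$ that the target form $a+3b\omega$ with $a\equiv n\pmod 3$ is exactly equivalent to $c_1\equiv c_2\pmod 3$, and flagging at the outset that the conjugate of a balanced vector is balanced, so the argument applies equally to either convention for $u\cdot v$.
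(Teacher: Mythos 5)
Your proof is correct, and it takes a genuinely different route from the paper's. The paper proves the lemma by brute-force bookkeeping: it sorts the entries of $v$ into three blocks, introduces nine multiplicities $a_1,a_\omega,a_{\omega^2},b_1,\dots,c_{\omega^2}$ recording how $u$ distributes over those blocks, writes out the coefficient of $\omega$ in $u\cdot v$ explicitly, and verifies its divisibility by $3$ by combining the two balancedness congruences for $u$ and $v$. Your argument replaces all of that with the observation that balancedness is equivalent to $\prod_k w_k=1$, so that the entrywise product of two balanced vectors is again balanced by multiplicativity; the rest is the short reduction $u\cdot v=(c_0-c_2)+(c_1-c_2)\omega$ with $c_1\equiv c_2\pmod 3$, which I checked is airtight (including the step $a-n\equiv-(c_1-c_2)\pmod 3$). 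What your approach buys is brevity and a conceptual explanation: it exhibits the balanced vectors as precisely the kernel of the multiplicative character $w\mapsto\prod_k w_k$, which makes closure under entrywise products (and hence the constraint on Gram matrix entries) immediate rather than a computation. You were also right to flag the Hermitian-versus-bilinear convention: the paper's displayed formula for $u\cdot v$ is actually the bilinear sum $\sum_k u_kv_k$ even though the lemma is applied to Hermitian Gram matrix entries, and your remark that $\overline{v}$ is balanced whenever $v$ is disposes of the discrepancy cleanly in either reading.
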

\begin{proof}
  Without loss of generality we may assume that $u$ and $v$ have the form given in Equation \ref{eq-uvWlog}. Then, the inner product of $u$ and $v$ can be written as:
  \begin{equation}
    u\cdot v = a_1+b_{\omega^2}+c_{\omega}-a_{\omega^2}-b_{\omega}-c_1 + \omega(a_{\omega}+b_1+c_{\omega^2}-a_{\omega^2}-b_{\omega}-c_{1}).
  \end{equation}
  From the fact that $u$ and $v$ are balanced, we have that
  \begin{align}
    b_1+b_{\omega}+b_{\omega^2}&\equiv c_1+c_{\omega}+c_{\omega^2}\pmod{3}; \text{ and }\\
    a_{\omega}+b_{\omega}+c_{\omega} &\equiv a_{\omega^2}+b_{\omega^2}+c_{\omega^2}\pmod{3}.
  \end{align}
  From these two equations, it follows that
  \begin{equation}
    a_{\omega}+b_1+c_{\omega^2}-a_{\omega^2}-b_{\omega}-c_1 \equiv b_1+b_{\omega}+b_{\omega^2}-(c_1+c_{\omega}+c_{\omega^2})\equiv 0\pmod{3}.
  \end{equation}
  We can then write $u\cdot v = a+3b\omega$ with $a,b\in\Z$. In general, we can write $u\cdot v = x+y\omega+z\omega^2 = (x-z)\omega+(y-z)\omega$ for some integers $x,y,z\geq 0$ satisfying $x+y+z=n$. From the above, we have $y-z\equiv 0\pmod{3}$, and
  \begin{equation}
    a=x-z=n-y-2z\equiv n-3z\equiv n\pmod{3}, 
  \end{equation}
  as we wanted to show.\qedhere
\end{proof}

An additional pruning method is given by the following: 
\begin{definition}[cf. Definition A.1. \cite{MK-21}]  \label{def-SForm}
  An Hermitian matrix $M$ of order $n$ is in \textit{standard form} if and only if
  \begin{enumerate}
  \item $|M_{2i-1,2i}| \geq |M_{j,k}|$ for all $i\equiv 1\pmod{2}$ and for all $2i-1\leq j< k\leq n$; and
  \item $|M_{i,i+1}|\geq \max\{|M_{i,i+2}|,|M_{i-1,i+1}|,|M_{i-1,i+2}|\}$ for all $i\equiv 0\pmod{2}$.
  \end{enumerate}
\end{definition}
\begin{remark}
  It is easy to check that in the process of generating all principal submatrix sets $\mathcal{G}_r$ we may assume that all matrices in $\mathcal{G}_k$ are in standard form.
\end{remark}

The main pruning technique we employ is an adaptation of the Moyssiadis-Kounias bound \cite{Moyssiadis-Kounias, MK-21}, and Orrick's sharpening in \cite{Orrick-15}. The following are extensions of these bounds to hermitian matrices; the proofs are analogous but we reproduce them here for completeness:

\begin{theorem}[cf. Moyssiadis-Kounias \cite{Moyssiadis-Kounias}]\label{thm-MKBound}
    Let $D$ be an $r\times r$ Hermitian matrix, and $M$ be an $m\times m$ Hermitian positive-definite matrix extending $D$ as follows:
    \begin{equation}
        M=\begin{bmatrix}
            D & B\\
            B^* & A
        \end{bmatrix},
    \end{equation}
   Assume furthermore that $M_{ii}=n$, and that the entries of $M$ are taken from a set $\Phi\subseteq\C$ such that $|\alpha|\geq c$ for all $\alpha\in \Phi$, and let
    \begin{equation}
        \hat{d}:=\max_{\gamma\in\Phi^{r}}\det\begin{bmatrix}
            D & \gamma\\
            \gamma^* &c
        \end{bmatrix}.
    \end{equation}
    Then
    \begin{equation}\label{eq-MKBound}
        \det M\leq (n-c)^{m-r-1}\left[(n-c)\det D+(m-r)\max(0,\hat{d})\right].
    \end{equation}
\end{theorem}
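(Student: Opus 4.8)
The plan is to mimic the classical real proof by expanding the determinant $\det M$ along the last $m-r$ rows using multilinearity, exactly as was done in the proof of Proposition~\ref{prop-GeneralBarba}, but now iterated. First I would set up notation: write $M$ in block form with the $r\times r$ Hermitian block $D$ in the top-left and diagonal entries all equal to $n$ in the bottom $(m-r)\times(m-r)$ block $A$. For the last row of $M$, I would split $M_{m,m}=n$ as $n=(n-c)+c$ and use linearity of the determinant in that row to write $\det M=\det M'+\det M''$, where $M'$ has last row $(0,\dots,0,\,n-c)$ appended to the off-diagonal data, and $M''$ has last diagonal entry replaced by $c$. Expanding $\det M'$ along its last row gives $(n-c)\det M_{m-1}$, where $M_{m-1}$ is the leading principal $(m-1)\times(m-1)$ submatrix. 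For $\det M''$, the key point is that $M''$ is still Hermitian and, because $M$ is positive-definite, either $M''$ is positive-definite (if $\det M''>0$) or $\det M''\le 0$; in the latter case it is simply discarded, which is what produces the $\max(0,\cdot)$ in the statement.

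The main technical step is to bound $\det M''$ when it is positive. Here I would apply Lemma~\ref{lemma-BLemma} with the roles assigned as follows: the ``$b$'' of the lemma is $c$, the ``$m$'' of the lemma is our $n$, and the Hermitian positive-definite matrix of the lemma is obtained from $M''$ by a principal-submatrix argument — crucially, the hypothesis $0<c\le|\beta_i|$ holds because all entries of $M$ lie in $\Phi$ and $|\alpha|\ge c$ on $\Phi$. However, Lemma~\ref{lemma-BLemma} in the form stated only handles a single bordered row with the \emph{last} diagonal entry equal to $c$; to get the factor $\hat d$ and the multiplicity $(m-r)$ I would instead argue more carefully by induction on $m-r$. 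Concretely: after peeling off one row as above, $\det M\le (n-c)\det M_{m-1}+\max(0,\det M'')$, and I would bound $\det M''\le \hat d\,(n-c)^{m-r-1}$ by a further application of Lemma~\ref{lemma-BLemma} (or the Muir--Kelvin bound, Theorem~\ref{thm:Muir-Kelvin}, applied to the Schur complement $A-B^*D^{-1}B$ after the Hermitian elementary operations eliminate the $c$-row), using that each of the remaining $m-r-1$ diagonal entries contributes at most $n-c$ once the rank-one correction is subtracted, and the $c$-row contributes the Schur-complement determinant which is at most $\hat d$ by definition of $\hat d$ as the maximum over $\gamma\in\Phi^r$ of the bordered determinant of $D$ with corner $c$. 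Then the inductive hypothesis on $M_{m-1}$ (which has the same $D$, one fewer border row, and the same bound $c$) gives $\det M_{m-1}\le (n-c)^{m-r-2}[(n-c)\det D+(m-1-r)\max(0,\hat d)]$, and substituting yields
\begin{equation}
\det M\le (n-c)^{m-r-1}\bigl[(n-c)\det D+(m-r)\max(0,\hat d)\bigr],
\end{equation}
closing the induction; the base case $m=r+1$ is precisely the definition of $\hat d$ together with the observation $\det M\le\det M'+\max(0,\det M'')=(n-c)\det D+\max(0,\det M'')$ and $\det M''\le\hat d$.

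The step I expect to be the main obstacle is the careful bookkeeping in bounding $\det M''$: one must verify that after the Hermitian row/column operations the relevant Schur complement is genuinely positive-definite (so that Muir--Kelvin applies), that the diagonal entries of that Schur complement are bounded by $n-c$ using $|M_{ij}|\ge c$ in exactly the way Lemma~\ref{lemma-BLemma} packages, and that the ``worst'' $c$-row contribution is captured by $\hat d$ rather than something larger — this last point uses that $\hat d$ maximizes over \emph{all} $\gamma\in\Phi^r$, which dominates whatever border vector actually appears in $M''$. The positivity dichotomy ($\det M''>0$ versus $\le 0$) needs to be threaded through the induction consistently so that the $\max(0,\hat d)$ never gets replaced by a possibly-negative $\hat d$; handling this cleanly is the delicate part, but it is entirely parallel to the real case in \cite{Moyssiadis-Kounias}.
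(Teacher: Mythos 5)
Your plan matches the paper's proof essentially step for step: induction on $m-r$, linearity of the determinant in the last row to split off the $(n-c)\det M_{m-1}$ term, the positivity dichotomy producing the $\max(0,\cdot)$, and a Schur complement with respect to the $c$-entry combined with Fischer's inequality to bound the remaining determinant by $\hat d\,(n-c)^{m-r-1}$ before closing the induction. The only slip is the passing reference to the Schur complement $A-B^*D^{-1}B$ --- the relevant complement is taken with respect to the bottom-right $c$ entry, as your surrounding description correctly indicates --- so the argument goes through exactly as you outline it.
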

\begin{proof}
    We prove this by induction on $m-r$. When $m-r=1$, any  matrix $M$ of order $m$ extending $D$ is of the shape
    \begin{equation}
        M=\begin{bmatrix}
            D & \gamma\\
            \gamma^* & n
        \end{bmatrix},
    \end{equation}
    where $\gamma\in \Phi^r$. By the linearity of the determinant on rows:
    \begin{equation}
    \begin{split}
        \det M &= (n-c)\det D + \det\begin{bmatrix}
            D & \gamma\\
            \gamma^* & c
        \end{bmatrix}\\
        &\leq (n-c)\det D + \max(0,\hat{d}).
    \end{split}
    \end{equation}
    Suppose the claim is true for $k=m-r$, we show that the claim is true for all Hermitian, positive-definite matrices of order $m=k+r+1$ extending $D$. Such a matrix $M$ has the shape
    \begin{equation}
        M=\begin{bmatrix}
            D & B_1 & \gamma\\
            B_1^*&A_1&\delta\\
            \gamma^*&\delta^*&n
        \end{bmatrix}.
    \end{equation}
    By linearity of the determinant,
    \begin{equation}
    \begin{split}
        \det M &= (n-c)\begin{bmatrix}
            D & B_1\\
            B_1^*&A_1
        \end{bmatrix}
        + \det \begin{bmatrix}
            D & B_1 & \gamma\\
            B_1^* & A_1 & \delta\\
            \gamma^* & \delta^* & c
        \end{bmatrix}\\
        &=(n-c) \begin{bmatrix}
            D & B_1\\
            B_1^*&A_1
        \end{bmatrix} + c\det\begin{bmatrix}
        D-\gamma\gamma^*/c & B_1-\gamma\delta^*/c\\
        B_1^*-\delta\gamma^*/c & A_1-\delta\delta^*/c
    \end{bmatrix}
    \end{split}
    \end{equation}
    Let
    \begin{equation}
      M'=\begin{bmatrix}
      D & B_1 & \gamma\\
      B_1^*&A_1&\delta\\
      \gamma^*&\delta^*&c
      \end{bmatrix}
    \end{equation}
    If $\det M'\leq 0$, then the claim is trivially true. Otherwise, $M'$ is positive-definite, which implies that $A_1-\delta\delta^*/c$ is also positive-definite. By Fischer's inequality
    \begin{equation}
        c\det\begin{bmatrix}
        D-\gamma\gamma^*/c & B_1-\gamma\delta^*/c\\
        B_1^*-\delta\gamma^*/c & A_1-\delta\delta^*/c
    \end{bmatrix}\leq c\det(D-\gamma\gamma^*/c) \det(A_1-\delta\delta^*/c).
    \end{equation}
    By definition, $c\det(D-\gamma\gamma^*/c)\leq \hat{d}$. Since $A-\delta\delta^*/c$ is positive-definite and has diagonal terms bounded above by $(n-c)$,
    \begin{equation}
      \det(A_1-\delta\delta^*/c)\leq (n-c)^{m-r}.
    \end{equation}
    Now, applying the induction hypothesis we have
    \begin{equation}
      \begin{split}
        \det M &\leq (n-c)^{m-r}\left[(n-c)\det D+(m-r)\max(0,\hat{d})\right]+\max(0,\hat{d})(n-c)^{m-r}\\
        &=(n-c)^{m-r}\left[(n-c)\det D+(m-r+1)\max(0,\hat{d})\right],
      \end{split}
    \end{equation}
    which is the claim we wanted to prove. \qedhere
\end{proof}

\begin{corollary}[cf. Orrick \cite{Orrick-15}]\label{cor-MKB1}
  Let $M$ be a Hermitian positive-definite matrix of order $m$ with $M_{ii}\leq n$, and $|M_{ij}|\geq c$. Then,
  \begin{equation}\label{eq-MKB1}
    \det M \leq (n-c)^m +cm(n-c)^{m-1}.
    \end{equation}
\end{corollary}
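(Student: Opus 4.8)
The plan is to obtain this as the $r=0$ specialization of Theorem \ref{thm-MKBound}, after a preliminary normalization of the diagonal.

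First I would reduce to the case $M_{ii}=n$ for all $i$. Given $M$ Hermitian positive-definite with $M_{ii}\leq n$, set $E=\diag(n-M_{11},\dots,n-M_{mm})$, which is positive semidefinite, and put $M'=M+E$. Then $M'$ is Hermitian positive-definite, has constant diagonal $n$, and has exactly the same off-diagonal entries as $M$, so $|M'_{ij}|\geq c$ still holds. Writing $M'=M^{1/2}\bigl(I+M^{-1/2}EM^{-1/2}\bigr)M^{1/2}$ and noting that $M^{-1/2}EM^{-1/2}$ is positive semidefinite, all its eigenvalues are nonnegative, so $\det(I+M^{-1/2}EM^{-1/2})\geq 1$ and hence $\det M'\geq\det M$. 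Thus it suffices to prove the bound for $M'$; i.e., we may assume $M_{ii}=n$.

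Next I would apply Theorem \ref{thm-MKBound} with $D$ the empty $0\times 0$ matrix, so that $\det D=1$ (the $r=0$ case). The set $\Phi$ in that theorem serves only to bound the moduli of off-diagonal entries from below, which here is precisely the hypothesis $|M_{ij}|\geq c$; and when $r=0$ the quantity $\hat d$ is the determinant of the $1\times 1$ matrix $[c]$, so $\hat d=c\geq 0$ and $\max(0,\hat d)=c$, independently of $\Phi$. The theorem then gives
\[\det M\leq (n-c)^{m-1}\bigl[(n-c)\det D+m\max(0,\hat d)\bigr]=(n-c)^{m-1}\bigl[(n-c)+cm\bigr]=(n-c)^m+cm(n-c)^{m-1},\]
which is the asserted inequality.

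There is no serious obstacle here. The only points needing a careful word are that the diagonal perturbation $M\mapsto M+E$ keeps the matrix positive-definite and does not decrease the determinant (a one-line consequence of positive semidefiniteness of $M^{-1/2}EM^{-1/2}$, with the off-diagonal modulus bound automatically preserved), and that Theorem \ref{thm-MKBound} is legitimately applicable with the empty matrix in the lower-left block, so that its $\Phi$-hypothesis and the definition of $\hat d$ degenerate to the triviality $\hat d=c$. Everything else is direct substitution.
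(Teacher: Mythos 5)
Your proof is correct, and it reaches the bound by the same essential route as the paper — specializing Theorem \ref{thm-MKBound} — but with different parameters. The paper takes $r=1$ and $D=[n]$, computes $\hat{d}=\max_\alpha\det\left[\begin{smallmatrix} n & \alpha\\ \overline{\alpha} & c\end{smallmatrix}\right]=c(n-c)$, and substitutes to get $(n-c)^{m-2}\bigl[(n-c)n+(m-1)c(n-c)\bigr]=(n-c)^m+cm(n-c)^{m-1}$; you instead take the degenerate $r=0$ case with empty $D$, $\det D=1$ and $\hat{d}=c$. Both computations land on the same expression. Your route requires accepting that Theorem \ref{thm-MKBound} applies with $r=0$ (empty lower-left block, $\det$ of the empty matrix equal to $1$, and the induction base case $m-r=1$ reading $n\leq(n-c)+c$); this is a legitimate convention and the induction goes through unchanged, but the paper's choice $D=[n]$ sidesteps any need to argue about the degenerate case. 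On the other hand, your preliminary normalization $M\mapsto M+E$ to force $M_{ii}=n$, with $\det(M+E)\geq\det M$ via positive semidefiniteness of $M^{-1/2}EM^{-1/2}$, addresses a point the paper silently glosses over: the corollary is stated with $M_{ii}\leq n$ (as is needed for its later use on $A_1-\delta\delta^*/c$ in Corollary \ref{cor-MKBn2}), whereas Theorem \ref{thm-MKBound} assumes constant diagonal $n$. So your write-up is, if anything, slightly more complete than the paper's.
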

\begin{proof}
  Apply the Moyssiadis-Kounias bound with $D=[n]$, and $r=1$. We have that $\hat{d}=c(n-c)$, since
  \begin{equation}
    \det\begin{bmatrix}
    n & \alpha\\
    \overline{\alpha} & c
    \end{bmatrix}
    = nc -|\alpha|^2\leq c(n-c).
  \end{equation}
  Substituting the values of $D$, $r$, and $\hat{d}$ in Equation \ref{eq-MKBound} yields the result.
\end{proof}

The following result can be proved in analogy to Corollary 3.2 in \cite{Orrick-15}.

\begin{corollary}\label{cor-MKBn2}
  Let $M=\begin{bmatrix}D & B\\B^* & A\end{bmatrix}$ be an $n\times n$ matrix satisfying the conditions of Theorem \ref{thm-MKBound}, and suppose furthermore that the entries of $M$ are in $\Z[\omega]$, and that $M_{ij}\equiv n\equiv 2\pmod{(1-\omega)}$. Then,
  \begin{equation}
    \det M\leq (n-1)^{n-r}\det D +\max(0,\hat{d})\left[2(n-1)^{n-r}-2(n-2)^{n-r}-(n-r)(n-2)^{n-r-1}\right].
  \end{equation}
\end{corollary}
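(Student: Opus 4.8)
The plan is to run the induction behind Theorem~\ref{thm-MKBound} essentially verbatim, but to replace the single lossy step — where the Moyssiadis–Kounias recursion discards a Schur complement via a bare Hadamard estimate — by the sharper Corollary~\ref{cor-MKB1}. The first thing to notice is that the arithmetic hypothesis pins down the constant: every off-diagonal entry of $M$ lies in $\Z[\omega]$ and is $\equiv 2\pmod{(1-\omega)}$, hence is a nonzero element of $\Z[\omega]$, hence has absolute value at least $1$; and $1$ is attained (by $-1,-\omega,-\omega^{2}$), so we are in the situation of Theorem~\ref{thm-MKBound} and Corollary~\ref{cor-MKB1} with $c=1$. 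Accordingly I would prove, by induction on $\ell:=n-r$, the whole family of inequalities
\[
\det G \;\le\; (n-1)^{\ell}\det D \;+\; \max(0,\hat d)\bigl(2(n-1)^{\ell}-2(n-2)^{\ell}-\ell(n-2)^{\ell-1}\bigr),
\]
valid for every Hermitian positive-definite $G$ of order $r+\ell$ that extends $D$, has all diagonal entries equal to $n$, and has off-diagonal entries in $\Z[\omega]$ congruent to $2$ modulo $(1-\omega)$; the Corollary is the case $\ell=n-r$. The base case $\ell=1$ is exactly the first line of the proof of Theorem~\ref{thm-MKBound}: expanding along the last row gives $\det G=(n-1)\det D+\det\left[\begin{smallmatrix}D&\gamma\\\gamma^{*}&1\end{smallmatrix}\right]\le (n-1)\det D+\max(0,\hat d)$, and $2(n-1)-2(n-2)-1=1$.

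For the inductive step I would write $G$ (of order $r+\ell$) as $\left[\begin{smallmatrix}G'&\gamma'\\{\gamma'}^{*}&n\end{smallmatrix}\right]$ with $G'=\left[\begin{smallmatrix}D&B_1\\B_1^{*}&A_1\end{smallmatrix}\right]$ of order $r+\ell-1$, splitting the last column into its first $r$ entries $\gamma$ and its last $\ell-1$ entries $\delta$. Linearity of the determinant in the last row gives $\det G=(n-1)\det G'+\det G''$, where $G''$ is $G$ with corner entry $n$ replaced by $1$; here $G'$ is a principal submatrix of $G$, so it again satisfies all the hypotheses at level $\ell-1$ (with the same $\hat d$). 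Bound $\det G'$ by the induction hypothesis. If $\det G''\le 0$, then $\det G\le (n-1)\det G'$, and all that remains is the polynomial inequality $(n-1)\bigl(2(n-1)^{\ell-1}-2(n-2)^{\ell-1}-(\ell-1)(n-2)^{\ell-2}\bigr)\le 2(n-1)^{\ell}-2(n-2)^{\ell}-\ell(n-2)^{\ell-1}$; cancelling $2(n-1)^{\ell}$ and dividing by $(n-2)^{\ell-2}$ reduces it to $2(n-2)^{2}+\ell(n-2)\le 2(n-1)(n-2)+(\ell-1)(n-1)$, i.e. to $n+\ell-3\ge 0$, which holds.

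If $\det G''>0$, then $G''$ is positive-definite, so its Schur complement with respect to the corner $1$, namely $\left[\begin{smallmatrix}D-\gamma\gamma^{*}&B_1-\gamma\delta^{*}\\B_1^{*}-\delta\gamma^{*}&A_1-\delta\delta^{*}\end{smallmatrix}\right]$, is positive-definite, and Fischer's inequality gives $\det G''\le\det(D-\gamma\gamma^{*})\det(A_1-\delta\delta^{*})\le\max(0,\hat d)\det(A_1-\delta\delta^{*})$. Here is the crucial point: $A_1-\delta\delta^{*}$ still has entries in $\Z[\omega]$, its diagonal entries are $n-|\delta_i|^{2}\le n-1$, and — since the ideal $(1-\omega)$ is conjugation-invariant and $2\cdot 2\equiv 1$ in $\Z[\omega]/(1-\omega)\cong\F_3$ — its off-diagonal entries satisfy $(A_1)_{ij}-\delta_i\overline{\delta_j}\equiv 2-1\equiv 1\pmod{(1-\omega)}$, hence are nonzero, hence have absolute value at least $1$. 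So Corollary~\ref{cor-MKB1}, applied to $A_1-\delta\delta^{*}$ (order $\ell-1$, diagonal bound $n-1$, $c=1$), yields $\det(A_1-\delta\delta^{*})\le (n-2)^{\ell-1}+(\ell-1)(n-2)^{\ell-2}$ in place of the crude $(n-1)^{\ell-1}$. Feeding the induction hypothesis for $\det G'$ and this estimate into $\det G=(n-1)\det G'+\det G''$, the coefficient of $\max(0,\hat d)$ telescopes: $(n-1)\bigl(2(n-1)^{\ell-1}-2(n-2)^{\ell-1}-(\ell-1)(n-2)^{\ell-2}\bigr)+(n-2)^{\ell-1}+(\ell-1)(n-2)^{\ell-2}=2(n-1)^{\ell}-(n-2)^{\ell-1}\bigl(2(n-2)+\ell\bigr)=2(n-1)^{\ell}-2(n-2)^{\ell}-\ell(n-2)^{\ell-1}$, which is exactly the bound at level $\ell$, closing the induction.

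The main obstacle — really the only non-mechanical point — is spotting which step of the Moyssiadis–Kounias argument can be sharpened and checking that the sharpening is legitimate: namely, that the Schur complement $A_1-\delta\delta^{*}$ inherits the property of having off-diagonal entries that are nonzero in $\Z[\omega]$ (the conjugation-invariance and $2\cdot 2\equiv 1$ computation modulo $(1-\omega)$), which is what lets one recurse through Corollary~\ref{cor-MKB1} rather than Hadamard's bound, together with verifying that the resulting coefficient recursion collapses to the stated closed form. Everything else is exactly the bookkeeping already carried out in the proof of Theorem~\ref{thm-MKBound}.
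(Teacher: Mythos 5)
Your proposal is correct and follows essentially the same route as the paper: both rerun the induction of Theorem \ref{thm-MKBound} with $c=1$ and replace the Fischer/Hadamard estimate on the Schur complement $A_1-\delta\delta^*$ by Corollary \ref{cor-MKB1}, justified by the observation that its off-diagonal entries are $\equiv 2-2\cdot 2\equiv 1\pmod{(1-\omega)}$ and hence nonzero in $\Z[\omega]$. The only cosmetic difference is that the paper records the inductive bound as an explicit double sum $S_k$ and then collapses it via geometric series, whereas you carry the closed form through the induction directly (and you are, if anything, more careful in explicitly checking the $\det G''\leq 0$ branch).
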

\begin{proof}
  In the proof of Theorem \ref{thm-MKBound} we used the fact that if $\det\overline{M}>0$, then $A_1-\delta\delta^*/c$ is positive-definite and used Fischer's inequality to conclude
  \begin{equation}
    \det(A_1-\delta\delta^*/c)\leq (n-c)^{m-r-1}.
  \end{equation}
  Corollary \ref{cor-MKB1} gives us a stronger upper bound for the determinant of $A_1-\delta\delta^*/c$: First, notice that, under our assumptions, $c=1$ and that $\delta_i\equiv 2\pmod{(1-\omega)}$ for all $i$. This implies
  \begin{equation}
    (A-\delta\delta^*/c)_{ij}= A_{ij} - \delta_i\overline{\delta_j}\equiv 2-2\cdot 2\equiv 1\pmod{(1-\omega)}.
  \end{equation}
  Also, since $A_{ii}=n$, we have that $(A-\delta\delta^*)_{ii}\leq n-1$. Applying Inequality \ref{eq-MKB1}, we find
  \begin{equation}
    \det(A-\delta\delta^*)\leq (n-2)^{m-r-1}+(m-r-1)(n-2)^{m-r-2}.
  \end{equation}
  Using this inequality in the proof of Theorem \ref{thm-MKBound}, we can prove by induction that
  \begin{equation}\label{eq-MKBE-0}
      \det M \leq (n-1)^{m-r}\det(D)+\max(0,\hat{d}) S_{m-r},
  \end{equation}
  where $S_{k}=\sum_{j=0}^{k-1}(n-1)^{k-1-j}(n-2)^j + \sum_{j=0}^{k-2}(n-1)^{k-2-j}(j+1)(n-2)^{j}$. Using the formula for the partial sum of a geometric series, and its derivative, we can prove that
  \begin{equation}\label{eq-MKSumComputation}
    S_k = 2(n-1)^{k}-2(n-2)^k-k(n-2)^{k-1}.
  \end{equation}
  Using Equation \ref{eq-MKSumComputation} to compute $S_{m-k}$, and substituting this value in Equation \ref{eq-MKBE-0} we find,
  \begin{equation}
    \det M\leq (n-1)^{m-r}\det D +\max(0,\hat{d})\left[2(n-1)^{m-r}-2(n-2)^{m-r}-(m-r)(n-2)^{m-r-1}\right].
  \end{equation}
  Letting $m=n$, we obtain the sought inequality.
  \end{proof}

We are now ready to present our algorithm to generate candidate Gram matrices. Let $n\equiv 1,2\pmod{3}$ be the order of our candidate maximal determinant matrix $\tilde{X}$. Let $\tilde{G}=\tilde{X}\tilde{X}^*$, and let $\Phi_n=\{a+3b\omega:a,b\in\Z;~ a\equiv n\pmod{3},~ |a+b\omega|<n\}$ be \textemdash by means of Lemma \ref{lemma-BalancedInnerProducts}\textemdash the set of admissible off-diagonal elements. The algorithm proceeds as follows:

\begin{enumerate}
\item Start with the set $\mathcal{G}_1=\{[n]\}$ of putative $1\times 1$ Gram matrices; assign $r\leftarrow 1$ and $\Phi_n^{(1)}=\Phi_n$.
\item Generate the set $\mathcal{G}_{r+1}$ from the set $\mathcal{G}_{r}$ by extending each matrix $G\in\mathcal{G}_{r}$ by each vector $f\in\Phi_n^{(r)}$ in the following way:
  \[
  G_f := \left[
    \begin{array}{c|c}
      G & f\\
      \hline
      f^* & n
  \end{array}
  \right].
  \]
  To prune the search, we may select only those vectors $f$ that ensure that $G_f$ is in standard form (see Definition \ref{def-SForm}). Additionally, calculate the bound of Theorem \ref{thm-MKBound} (or the bound of Corollary \ref{cor-MKBn2}, if $n\equiv 2\pmod{3}$) for $G_f$; if this bound is less than $\tilde{d}=\det(\tilde{G})$, discard $G_f$; otherwise include $G_f$ in the list $\mathcal{G}_{r+1}$.
\item If $r+1=n$, test using the methods of Theorem \ref{thm-NonExistenceBarba3} if each matrix $G\in\mathcal{G}_{n}$ satisfies $\det(G)\in N(\Z[\omega])$; if one such $G$ does not satisfy this condition, remove it from $\mathcal{G}_n$. Return $\mathcal{G}_n$ and examine this set manually to assess whether or not $\tilde{X}$ is maximal determinant.\\
  If $r+1<n$, then construct the graph $\mathcal{P}(G)$ for every matrix $G\in\mathcal{G}_{r+1}$ and determine the isomorphism classes of such graphs. Prune $\mathcal{G}_{r+1}$ by keeping exactly one representative $G$ of each isomorphism class of graphs $\mathcal{P}(G)$. Now let $\Phi_n^{(r+1)}$ be the set of off-diagonal entries of all matrices in $\mathcal{G}_{r+1}$. Update the variable $r\leftarrow r+1$ and go to Step 2.
\end{enumerate}

\begin{remark} Since the algorithm described here only needs the value of $|\det(\tilde{G})|$ to perform the certificate of maximality, it can also be used to obtained sharpened upper bounds for the maximal determinant. To do this, we may give any determinant value $d$ as a lower bound (not necessarily $d=|\det(\tilde{G})|$); if the set of putative matrices $\mathcal{G}_n$ is empty, then we conclude that $d$ is strictly larger than the determinant of any matrix over the third roots of order $n$.
\end{remark}

\begin{example} We show that the following $8\times 8$ matrix, written in exponential form,
  \begin{equation}
    \tilde{M}_8=\left[\begin{array}{cc|cc||cc|cc}
    2 & 0 & 0 & 2 & 1 & 1 & 0 & 0\\
    0 & 2 & 2 & 0 & 1 & 1 & 0 & 0\\
    \hline
    0 & 2 & 0 & 2 & 0 & 0 & 1 & 1\\
    2 & 0 & 2 & 0 & 0 & 0 & 1 & 1\\
    \hline
    \hline
    0 & 0 & 1 & 1 & 2 & 0 & 2 & 0\\
    0 & 0 & 1 & 1 & 0 & 2 & 0 & 2\\
    \hline
    1 & 1 & 0 & 0 & 2 & 0 & 0 & 2\\
    1 & 1 & 0 & 0 & 0 & 2 & 2 & 0
    \end{array}\right],
  \end{equation}
  is maximal determinant over the third roots. The Gram matrix of $\tilde{M}_8$ is
  \begin{equation}\label{eq-GramM8}
    \tilde{G}_8 = \tilde{M}_8\tilde{M}_8^* = \tilde{M}_8^*\tilde{M}_8=
    \begin{bmatrix}
      8 & 2 & - & - & - & - & - & -\\
      2 & 8 & - & - & - & - & - & -\\
      - & - & 8 & 2 & - & - & - & -\\
      - & - & 2 & 8 & - & - & - & -\\
      - & - & - & - & 8 & 2 & - & -\\
      - & - & - & - & 2 & 8 & - & -\\
      - & - & - & - & - & - & 8 & 2\\
      - & - & - & - & - & - & 2 & 8
    \end{bmatrix},
  \end{equation}
  and has determinant $G_8 = 8957952 = 2^{12}\cdot 3^7$. The Gram matrix search algorithm described above begins with the $1\times 1$ matrix $[8]$, and with the initial set of off-diagonal entries
  \begin{equation}
    \Phi_8^{(1)} = \{-7-6\omega,-4,-1+6\omega,-4-3\omega,-1+3\omega,-4-6\omega,-1,2+6\omega,-1-3\omega,2+3\omega,2,2-3\omega,5+3\omega,5\}.
  \end{equation}
  At the step $r=2$ of the search, we generate all possible $2\times 2$ submatrices
  \begin{equation}
    G_2(\alpha)=\begin{bmatrix}
    8 & \alpha\\
    \overline{\alpha} & 8
    \end{bmatrix},
  \end{equation}
  where $\alpha\in\Phi_0$. After performing all pruning steps described, only three matrices remain:
  \begin{equation}
    G_2^{(1)}=\begin{bmatrix}
    8 & -\\
    - & 8
    \end{bmatrix},~
    G_2^{(2)}=\begin{bmatrix}
    8 & 2 \\
    2 & 8
    \end{bmatrix},~
    G_2^{(3)}=
    \begin{bmatrix}
      8 & -1-3\omega\\
      2+3\omega & 8
    \end{bmatrix}.
  \end{equation}
  Hence we let $\mathcal{G}_2=\{G_2^{(1)},G_2^{(2)},G_2^{(3)}\}$, and the corresponding set $\Phi_8^{(2)}$ of admissible off-diagonal entries is
  \begin{equation}
    \Phi_8^{(2)}=\{-1,2,-1-3\omega,2+3\omega\}.
  \end{equation}
  Notice that the matrix $G_2^{(3)}$ is permutation-equivalent to $\overline{G_2^{(3)}}$. Proceeding with $\overline{G_2^{(3)}}$ instead of $G_2^{(3)}$ we would arrive at the same (final) set of $8\times 8$ candidates $\mathcal{G}_8$, although the intermediate sets $\mathcal{G}_k$ for $k<8$ may differ. The number of distinct matrices and entries in each step of the algorithm is described in Table \ref{tab-Search8}. At the final stage $r=8$ the the set of candidate matrices is $\mathcal{G}_8=\{G_8^{(1)},G_8^{(2)},G_8^{(3)},G_8^{(4)}\}$, where $G_8^{(1)}=\tilde{G}_8$ (see Equation \ref{eq-GramM8}),
  \begin{equation}
    G_8^{(2)}=
    \begin{bmatrix}
      8 & 2 & - & - & - & - & 2 & 2\\
      2 & 8 & - & - & - & - & 2 & 2\\
      - & - & 8 & 2 & - & - & - & -\\
      - & - & 2 & 8 & - & - & - & -\\
      - & - & - & - & 8 & 2 & - & -\\
      - & - & - & - & 2 & 8 & - & -\\
      2 & 2 & - & - & - & - & 8 & 2\\
      2 & 2 & - & - & - & - & 2 & 8
    \end{bmatrix},
  \end{equation}
  \begin{equation}
    G_8^{(3)}=
    \begin{bmatrix}
      8 & 2 & - & - & - & - & - & 2\\
      2 & 8 & - & - & - & - & - & 2\\
      - & - & 8 & 2 & - & - & - & -\\
      - & - & 2 & 8 & - & - & - & -\\
      - & - & - & - & 8 & 2 & - & -\\
      - & - & - & - & 2 & 8 & - & -\\
      - & - & - & - & - & - & 8 & -\\
      2 & 2 & - & - & - & - & - & 8
    \end{bmatrix},
  \end{equation}
  and,
  \begin{equation}
    G_8^{(4)}=
    \begin{bmatrix}
      8 & 2 & - & - & - & - & 2 & -\\
      2 & 8 & - & - & - & - & 2 & -\\
      - & - & 8 & 2 & - & - & - & 2\\
      - & - & 2 & 8 & - & - & - & 2\\
      - & - & - & - & 8 & 2 & - & -\\
      - & - & - & - & 2 & 8 & - & -\\
      2 & 2 & - & - & - & - & 8 & -\\
      - & - & 2 & 2 & - & - & - & 8
    \end{bmatrix}.
  \end{equation}
  The determinant of $\hat{G}_8=G_8^{(1)}$, $G_8^{(2)}$, and $G_8^{3}$ coincide, so these candidate matrices do not pose obstructions to the maximality of the determinant of $\hat{M}_8$. However,
  \begin{equation}
    \det G_8^{(4)} = 9097920 > 8957952 = \det \hat{G}_8,
  \end{equation}
  so we must show that $G_8^{(4)}$ cannot be the Gram matrix of an $8\times 8$ matrix over the third roots. This happens to be the case since $9097920=2^6\cdot 3^9\cdot 5\cdot 13$. In order for $\det G_8^{(4)}$ to be a norm of an element in $\Z[\omega]$, all prime factors $p\equiv 2\pmod{3}$ must appear with even multiplicity, however the factor $p=5$ appears with odd multiplicity. This shows that no Gram matrix of an $8\times 8$ matrix over the third roots has larger determinant than $\hat{G}_8$; hence $\hat{M}_8$ is maximal determinant.\\

  Following the same procedure, we may show that the balanced and normal $5\times 5$ matrix:
  \begin{equation}
    \tilde{M}_5=
    \left[
    \begin{array}{ccccc}
      2 & 2 & 1 & 0 & 1\\
      2 & 2 & 0 & 1 & 1\\
      1 & 0 & 2 & 2 & 1\\
      0 & 1 & 2 & 2 & 1\\
      1 & 1 & 1 & 1 & 2
    \end{array}
    \right],
  \end{equation}
  (which is equivalent to the matrix $M_5$ in Equation \ref{eq-M5}) satisfies
  \begin{equation}
    \tilde{M}_5\tilde{M}_5^*=\tilde{M}_5^*\tilde{M}_5=
    \begin{bmatrix}
      5 & 2 & - & - & -\\
      2 & 5 & - & - & -\\
      - & - & 5 & 2 & -\\
      - & - & 2 & 5 & -\\
      - & - & - & - & 5
    \end{bmatrix}.
  \end{equation}
  This matrix $\tilde{M}_5$ is maximal determinant (see Table \ref{tab-Search5}); its determinant has absolute value $\sqrt{1701}$. The $11\times 11$ matrix
  \begin{equation}
    \tilde{M}_{11} = \left[\begin{array}{*{11}{c}}
        0 & 1 & 0 & 0 & 1 & 1 & 2 & 1 & 1 & 1 & 1\\
        2 & 0 & 0 & 0 & 1 & 2 & 1 & 2 & 2 & 1 & 1\\
        0 & 0 & 2 & 0 & 2 & 2 & 1 & 1 & 1 & 2 & 1\\
        0 & 0 & 0 & 2 & 2 & 1 & 1 & 2 & 1 & 1 & 2\\
        1 & 1 & 2 & 2 & 2 & 0 & 0 & 0 & 2 & 1 & 1\\
        1 & 2 & 1 & 1 & 0 & 0 & 1 & 0 & 1 & 1 & 1\\
        2 & 1 & 2 & 1 & 0 & 2 & 0 & 0 & 1 & 1 & 2\\
        2 & 1 & 1 & 2 & 0 & 0 & 0 & 2 & 1 & 2 & 1\\
        1 & 1 & 2 & 1 & 1 & 1 & 1 & 2 & 0 & 2 & 0\\
        1 & 1 & 1 & 2 & 1 & 2 & 1 & 1 & 0 & 0 & 2\\
        2 & 1 & 1 & 1 & 2 & 1 & 1 & 1 & 2 & 0 & 0\\
      \end{array}\right],
  \end{equation}
  satisfying
  \begin{equation}
    \tilde{M}_{11}\tilde{M}_{11}^* =
    \left[
      \begin{array}{*{11}{c}}
        11 & 2 & 2 & 2 & - & - & - & - & - & - & -\\
        2  &11 & 2 & 2 & - & - & - & - & - & - & -\\
        2  & 2 &11 & 2 & - & - & - & - & - & - & -\\
        2  & 2 & 2 &11 & - & - & - & - & - & - & -\\
        -  & - & - & - &11 & 2 & 2 & 2 & - & - & -\\
        -  & - & - & - & 2 &11 & 2 & 2 & - & - & -\\
        -  & - & - & - & 2 & 2 &11 & 2 & - & - & -\\
        -  & - & - & - & 2 & 2 & 2 &11 & - & - & -\\
        -  & - & - & - & - & - & - & - &11 & 2 & 2\\
        -  & - & - & - & - & - & - & - & 2 &11 & 2\\
        -  & - & - & - & - & - & - & - & 2 & 2 &11
      \end{array}
      \right],
  \end{equation}
  is maximal determinant; see Table \ref{tab-Search11}.
\end{example}

\begin{table}
  \caption{Tables of runtime statistics for the certificates of maximality of candidate matrices $\tilde{M}_5$, $\tilde{M}_8$, and $\tilde{M}_{11}$ of orders $n=5,8,$ and $11$ respectively. We remark that the intermediate values of $|\mathcal{G}_r|$ and $|\Phi_n^{(r)}|$ for $1\leq r<n$ are not absolute and depend on the choice of representatives of the equivalence classes of matrices in $\mathcal{G}_r$. Because of this, a parallelized algorithm may give randomized results for intermediate values. Nevertheless, the sets $\mathcal{G}_n$ and $\Phi_n^{(n)}$ do not depend on the choice of representatives during runtime.}
  \centering
  \begin{subtable}{0.8\textwidth}
    \centering
    \caption{Number of elements in $\mathcal{G}_r$ and $\Phi_5^{(r)}$ at every step of the computation of a maximality certificate for $\tilde{M}_5$.}
    \label{tab-Search5}
    \begin{tabular}{l|ccccc}
      $r$ & 1 & 2 & 3 & 4 & 5\\
      \hline
      $|\mathcal{G}_r|$ & 1 & 2 & 3 & 2 & 1\\
      $|\Phi_5^{(r)}|$  & 6 & 2 & 2 & 2 & 2
    \end{tabular}
  \end{subtable}
  \begin{subtable}{0.8\textwidth}
    \centering
    \caption{Number of elements in $\mathcal{G}_r$ and $\Phi_8^{(r)}$ at every step of the computation of a maximality certificate for $\tilde{M}_8$.}\label{tab-Search8}
    \begin{tabular}{l|cccccccc}
      $r$ & 1 & 2 & 3 & 4 & 5 & 6 & 7 & 8\\
      \hline
      $|\mathcal{G}_r|$ & 1 & 3 & 9 & 17 & 13 & 8 & 5 & 4\\
      $|\Phi_8^{(r)}|$& 14 & 4 & 4 & 4 & 2 & 2 & 2 & 2
    \end{tabular}
  \end{subtable}
  \begin{subtable}{0.8\textwidth}
    \centering
    \caption{Number of elements in $\mathcal{G}_r$ and $\Phi_8^{(r)}$ at every step of the computation of a maximality certificate for $\tilde{M}_8$.}\label{tab-Search11}
    \begin{tabular}{l|ccccccccccc}
      $r$ & 1 & 2 & 3 & 4 & 5 & 6 & 7 & 8 & 9 & 10 & 11\\
      \hline
      $|\mathcal{G}_r|$ & 1  & 3 & 16 & 152 & 718 & 458 & 109 & 24 & 8 & 1 & 1\\ 
      $|\Phi_{11}^{(r)}|$  & 25 & 4 & 4  & 4   & 4   & 4   & 4   & 2  & 2 & 2 & 2
    \end{tabular}
  \end{subtable}
\end{table}

Large determinant matrices over the third roots of order $n=14,15,16,17,19,$ and $20$ can be found in Chapter 5 and Appendix B of \cite{Ponasso-Thesis}. These matrices give the determinant values stated in Table \ref{tab-Maxdet3}.

\section{Maximal determinants of matrices over the fourth roots}\label{sec-4Roots}
We conclude with an account of results for matrices over the fourth roots; this case was first studied by J.H.E. Cohn \cite{Cohn-ComplexDOptimal}. This maximal determinant problem is much better behaved that one over the third roots mainly because of two reasons 1) we have an inclusion $\mu_2\subset \mu_4$, which implies that all Hadamard and Barba matrices with entries $\pm 1$ are also maximal determinant over the fourth roots; and 2) there is a mapping from $\mu_4=\{\pm 1,\pm i\}$ into $2\times 2$ matrices over $\mu_2$, called the \textit{Turyn morphism}
\begin{equation}
  \pm 1\mapsto \pm
  \begin{bmatrix}
    1 & -\\
    1 & 1
  \end{bmatrix};
  ~
  \pm i\mapsto \pm \begin{bmatrix}
    1 & 1\\
    - & 1
    \end{bmatrix}
\end{equation}
which sends $\BH(n,4)$ matrices into $\BH(2n,2)$ matrices, when it is applied entrywise. This morphism not only maps Hadamard matrices into Hadamard matrices of twice the size; more generally it relates the maximal determinant problem over the fourth roots to the maximal determinant problem over $\pm 1$ for matrices of twice the size.

\begin{definition} A matrix $M$ of order $2n$ is a \text{skew block matrix} if and only if $M$ has the following block-structure
  \begin{equation}
    M=\begin{bmatrix}
    A & B\\
    -B & A
    \end{bmatrix},
  \end{equation}
  where $A$ and $B$ are of order $n$.
\end{definition}

The following theorem can be obtained by analyzing the value of the determinant of a $\pm 1$ matrix obtained by applying the Turyn morphism to a matrix over the fourth roots.
\begin{theorem}[Cohn; Theorem 1 \cite{Cohn-ComplexDOptimal}]\label{thm-CohnTheorem} Let $\gamma_m(n)$ be the maximum absolute value of a matrix of order $n$ over the $\ell$-th roots. Then, $\gamma_2(2n)\geq 2^n\gamma_4(n)^2$. Furthermore, equality holds if and only if there exists a skew $\pm 1$ block matrix $M$ with $|\det(M)|=\gamma_2(2n)$.
\end{theorem}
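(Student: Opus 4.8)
The plan is to make precise the assertion in the paragraph preceding the statement: applying the Turyn morphism entrywise and then reordering rows and columns identifies $n\times n$ matrices over $\mu_4$ with order-$2n$ skew $\pm 1$ block matrices, so the real content is to see exactly what this map does to determinants. For $M\in\mathcal M_4(n)$ let $T(M)$ be the $\pm 1$ matrix of order $2n$ obtained by replacing each entry $M_{jk}$ by the $2\times 2$ Turyn block $\phi(M_{jk})$. First I would check on the four values that $\phi(z)=\bigl[\begin{smallmatrix} a & b\\ -b & a\end{smallmatrix}\bigr]$ with $a+ib=(1-i)z$; in particular $\{\phi(z):z\in\mu_4\}$ is exactly the set of four matrices $\bigl[\begin{smallmatrix} a & b\\ -b & a\end{smallmatrix}\bigr]$ with $a,b\in\{\pm1\}$. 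Un-interleaving the rows and columns of $T(M)$ (grouping the ``top'' rows of all blocks together, and likewise the ``bottom'' rows and the columns) turns it into $\bigl[\begin{smallmatrix} A & B\\ -B & A\end{smallmatrix}\bigr]$, a skew $\pm 1$ block matrix with $A+iB=(1-i)M$, and this reordering does not change $|\det|$. Using the standard block diagonalisation
\[
  \begin{bmatrix} A & B\\ -B & A\end{bmatrix}\begin{bmatrix} I & I\\ -iI & iI\end{bmatrix}=\begin{bmatrix} I & I\\ -iI & iI\end{bmatrix}\begin{bmatrix} A-iB & 0\\ 0 & A+iB\end{bmatrix},
\]
which gives $\det\bigl[\begin{smallmatrix} A & B\\ -B & A\end{smallmatrix}\bigr]=|\det(A+iB)|^2$, I obtain the key identity
\[
  |\det T(M)|=|\det((1-i)M)|^2=|1-i|^{2n}\,|\det M|^2=2^n|\det M|^2 .
\]
Since the un-interleaving is reversible and every $\bigl[\begin{smallmatrix} a & b\\ -b & a\end{smallmatrix}\bigr]$ with $a,b\in\{\pm1\}$ is some $\phi(z)$, every skew $\pm 1$ block matrix of order $2n$ equals $T(N)$, after this reordering, for a unique $N\in\mathcal M_4(n)$.

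The inequality is then immediate: $T(M)$ is a $\pm1$ matrix of order $2n$, so $2^n|\det M|^2=|\det T(M)|\le\gamma_2(2n)$ for every $M\in\mathcal M_4(n)$, and choosing $M$ with $|\det M|=\gamma_4(n)$ yields $\gamma_2(2n)\ge 2^n\gamma_4(n)^2$. For the equality clause, suppose first $\gamma_2(2n)=2^n\gamma_4(n)^2$ and pick $M$ attaining $\gamma_4(n)$; then $|\det T(M)|=2^n\gamma_4(n)^2=\gamma_2(2n)$, so the reordered $T(M)$ is a skew $\pm1$ block matrix of order $2n$ achieving the maximal $\pm1$ determinant. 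Conversely, if $M'$ is a skew $\pm1$ block matrix of order $2n$ with $|\det M'|=\gamma_2(2n)$, write (after reordering) $M'=T(N)$ with $N\in\mathcal M_4(n)$; then $\gamma_2(2n)=|\det T(N)|=2^n|\det N|^2\le 2^n\gamma_4(n)^2$, and combined with the inequality already proved this forces equality.

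The step that requires care is the determinant identity: tracking how the Turyn map transforms $\det$. The crucial observation is that, up to a permutation of rows and columns, $T$ is the map $M\mapsto$ (real representation of $(1-i)M$), and the real representation of a complex matrix $C$ has determinant $|\det C|^2$; the constant $|1-i|^{2n}=2^n$ is precisely the gap between $\gamma_2(2n)$ and $\gamma_4(n)^2$, and the equivalence in the ``furthermore'' clause is just the statement that both bounds in the chain $2^n\gamma_4(n)^2\le\gamma_2(2n)$ and $|\det M'|=2^n|\det N|^2\le 2^n\gamma_4(n)^2$ are saturated by the same object. Once the identity is in place, everything else is bookkeeping.
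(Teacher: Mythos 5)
Your proposal is correct, and it is exactly the argument the paper alludes to but does not write out (the paper only remarks that the theorem "can be obtained by analyzing the value of the determinant of a $\pm 1$ matrix obtained by applying the Turyn morphism" and cites Cohn). The two key details you supply — that the Turyn block of $z$ is the real $2\times 2$ representation of $(1-i)z$, so that after un-interleaving $T(M)$ becomes the skew block matrix with $A+iB=(1-i)M$, and the identity $\det\left[\begin{smallmatrix} A & B\\ -B & A\end{smallmatrix}\right]=|\det(A+iB)|^2$ giving $|\det T(M)|=2^n|\det M|^2$ — are verified correctly, and the bijection with skew $\pm1$ block matrices makes both directions of the equality clause immediate.
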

\begin{theorem}[Cohn; Theorem 3 \cite{Cohn-ComplexDOptimal}]\label{thm-Cohn3} Suppose that $2^n\gamma_4(n)^2=\gamma_2(2n)$, then there exists a Barba matrix of order $n$ over the fourth roots.
\end{theorem}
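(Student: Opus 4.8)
The plan is to invert the Turyn morphism. Take $n$ odd throughout (the case in which the Barba bound is the operative one), so that $2n\equiv 2\pmod 4$. The hypothesis $2^n\gamma_4(n)^2=\gamma_2(2n)$ is precisely the equality case of Theorem~\ref{thm-CohnTheorem}, so it furnishes a skew $\pm1$ block matrix
\[
M=\begin{bmatrix} A & B\\ -B & A\end{bmatrix}
\]
of order $2n$, with $A,B$ being $n\times n$ $\pm1$ matrices and $|\det M|=\gamma_2(2n)$. The idea is to recognize $M$ as the (un-interleaved) entrywise Turyn image of a suitable matrix over $\mu_4$ and to transfer the extremality of $M$ to that matrix.

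Accordingly, set $N:=(1-i)^{-1}(A+iB)$. Since each $A_{jk}+iB_{jk}$ lies in $\{1+i,1-i,-1+i,-1-i\}=(1-i)\mu_4$, the matrix $N$ has all its entries in $\mu_4$. Using the $\mathbb C$-similarity $\begin{bmatrix}A&B\\-B&A\end{bmatrix}\sim\begin{bmatrix}A+iB&0\\0&A-iB\end{bmatrix}$ together with $A-iB=\overline{A+iB}$, one obtains $|\det M|=|\det(A+iB)|\cdot|\det(A-iB)|=|\det(A+iB)|^2=|1-i|^{2n}|\det N|^2=2^n|\det N|^2$. Comparing with the hypothesis forces $|\det N|=\gamma_4(n)$, i.e. $N$ is a matrix of maximal determinant over $\mu_4$ of order $n$.

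Next I would show that $N$ meets the Barba bound with equality. For $n$ odd an $n$-sum of fourth roots is $(a-c)+(b-d)i$ with $a+b+c+d=n$; as $(a-c)+(b-d)\equiv n\equiv 1\pmod 2$ it is a nonzero Gaussian integer, of modulus $\ge 1$ with $1$ attained, so $\sigma_4(n)=1$. Theorem~\ref{thm-BarbaRoots} then gives $|\det N|\le\sqrt{2n-1}\,(n-1)^{(n-1)/2}$, with equality characterized by the existence of a unimodular diagonal $\Delta$ making $\Delta^*N$ a Barba matrix. The crux is to establish this equality: squaring the right-hand side and multiplying by $2^n$ yields exactly the Ehlich--Wojtas bound $2^n(2n-1)(n-1)^{n-1}$ for $\pm1$ matrices of order $2n\equiv 2\pmod 4$, so $\gamma_4(n)=\sqrt{\gamma_2(2n)/2^n}$ equals the Barba bound precisely when $\gamma_2(2n)$ attains the Ehlich--Wojtas bound. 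This is the substantive input, and the main obstacle of the proof: a priori $M$ is only the maximizer over the full $\pm1$ family, and the skew-block maximizer need not meet Ehlich--Wojtas unless an Ehlich--Wojtas-optimal $\pm1$ matrix of order $2n$ actually exists --- which is the case in the settings where the theorem is applied, in particular for the infinite family of $\pm1$ maximal-determinant matrices of order $2n\equiv 2\pmod 4$ used in this section. Granting this, $|\det N|$ meets the Barba bound.

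It remains to turn $N$ into a genuine Barba matrix over $\mu_4$. By the equality case of Proposition~\ref{prop-GeneralBarba} applied to $G=NN^*$ with $b=\sigma_4(n)=1$, the diagonal matrix $\Delta=\diag\big((NN^*)_{1n},\dots,(NN^*)_{n-1,n},1\big)$ satisfies $(\Delta^*N)(\Delta^*N)^*=(n-1)I_n+J_n$. Each $(NN^*)_{jn}$ is an inner product of two vectors over $\mu_4$, hence lies in $\mathbb Z[i]$, and at equality has modulus $1$; since the units of $\mathbb Z[i]$ are exactly $\{\pm1,\pm i\}=\mu_4$, the matrix $\Delta$ has entries in $\mu_4$, so $B:=\Delta^*N$ has entries in $\mu_4$ and satisfies $BB^*=(n-1)I_n+J_n$. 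This $B$ is the Barba matrix sought. This final arithmetic step is routine --- indeed simpler than the corresponding step for $\mu_3$ in Corollary~\ref{cor-Barba3}, because $\mathbb Z[i]^\times=\mu_4$ whereas $\mathbb Z[\omega]^\times=\mu_6\supsetneq\mu_3$.
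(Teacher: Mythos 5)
Your setup is the natural one and, since the paper states this result only as a citation to Cohn's paper without reproducing an argument, there is no in-paper proof to compare against; the proposal must stand on its own. Most of it does: the equality case of Theorem~\ref{thm-CohnTheorem} does furnish a skew $\pm1$ block maximizer $M$, the identity $|\det M|=|\det(A+iB)|^2=2^n|\det N|^2$ for $N=(1-i)^{-1}(A+iB)$ is correct, $\sigma_4(n)=1$ for odd $n$ is correct, and the final extraction of a genuine $\mu_4$ Barba matrix from the equality case of Proposition~\ref{prop-GeneralBarba} via $\Z[i]^{\times}=\mu_4$ is sound (and parallels Corollary~\ref{cor-Barba3}, as you note). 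Your restriction to odd $n$ is also necessary, not merely convenient: at $n=2$ one has $2^2\gamma_4(2)^2=16=\gamma_2(4)$ while no Barba matrix of order $2$ over $\mu_4$ exists, so the theorem must be read with $n$ odd.

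The difficulty is exactly the step you flag and then wave through. The hypothesis $2^n\gamma_4(n)^2=\gamma_2(2n)$ yields only that $N$ is a maximal-determinant $\mu_4$ matrix of order $n$; it does not by itself force $\gamma_4(n)$ to equal the Barba bound $\sqrt{2n-1}\,(n-1)^{(n-1)/2}$, and Theorem~\ref{thm-BarbaRoots} produces a Barba matrix only in that equality case. Your patch --- ``granting'' that the Ehlich--Wojtas bound is attained at order $2n$ --- is an additional hypothesis, equivalent to the very attainment you are trying to prove, so what is actually established is the weaker statement ``if $2^n\gamma_4(n)^2=\gamma_2(2n)$ \emph{and} some $\pm1$ matrix of order $2n$ meets the Ehlich--Wojtas bound, then a Barba matrix over $\mu_4$ of order $n$ exists.'' That weaker version does cover the only application made in this paper (the Koukouvinos--Kounias--Seberry matrices of order $2(q^2+q+1)$ meet Ehlich--Wojtas), but it is not the cited theorem. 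To close the gap one must show that a skew block $\pm1$ matrix of order $2n$ whose associated $\mu_4$ matrix fails the Barba equality cannot be a global maximizer over \emph{all} $\pm1$ matrices of order $2n$ --- for instance by a local-move or Gram-matrix argument that manufactures a strictly larger determinant --- and no such argument is supplied. As written, the proposal has a genuine gap at the theorem's one substantive step.
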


There is a construction due to Koukovinos, Kounias, and Seberry  \cite{KKS-EW}, for maximal determinant matrices with entries $\pm 1$ at orders $2(q^2+q+1)\equiv 2\pmod{4}$ with the block structure
\begin{equation}
  \begin{bmatrix}
  R & S\\
  -S^{\intercal} & R^{\intercal}
  \end{bmatrix},
\end{equation}
where $R$ and $S$ are circulant matrices of order $q^2+q+1$. In \cite{Cohn-NumberDOptimal} it is shown that if $R$ and $S$ are circulant matrices of order $n$, then letting $P$ be the back-diagonal matrix of order $n$
\begin{equation}\label{eq-SkewEquation}
  \begin{bmatrix}
    P & 0\\
    0 & I_{n}
  \end{bmatrix}
  \begin{bmatrix}
    R & S\\
    -S^{\intercal} & R^{\intercal}
  \end{bmatrix}
  \begin{bmatrix}
    P & 0\\
    0 & I_{n}
  \end{bmatrix}
  =\begin{bmatrix}
  PRP & PS\\
  -PS^{\intercal} & R^{\intercal}
  \end{bmatrix},
\end{equation}
is a block skew matrix.

\begin{theorem} Let $q$ be a prime power. Then, there exists a Barba matrix over the fourth roots of order $q^2+q+1$.
\end{theorem}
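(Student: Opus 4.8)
The plan is to run a Koukouvinos--Kounias--Seberry maximal-determinant $\pm1$ matrix through the block-skew rewriting of Equation~\ref{eq-SkewEquation} and then through Cohn's Theorems~\ref{thm-CohnTheorem} and~\ref{thm-Cohn3}. Write $n=q^2+q+1$. The first observation is that $n$ is odd for every prime power $q$, since $q(q+1)$ is always even; hence $2n\equiv 2\pmod 4$, which is precisely the range of orders covered by the KKS construction \cite{KKS-EW}.

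Next, let $M_0$ be a KKS maximal-determinant $\pm1$ matrix of order $2n$, so that $|\det M_0|=\gamma_2(2n)$ and
\[
M_0=\begin{bmatrix} R & S\\ -S^{\intercal} & R^{\intercal}\end{bmatrix},
\]
with $R,S$ circulant of order $n$. Let $P$ be the back-diagonal matrix of order $n$. Conjugating $M_0$ by $\diag(P,I_n)$ is a similarity by a permutation matrix, so it preserves the determinant exactly; and since $PRP=R^{\intercal}$ and $S^{\intercal}P=PS$ for circulant $R,S$, Equation~\ref{eq-SkewEquation} shows that
\[
M:=\diag(P,I_n)\,M_0\,\diag(P,I_n)=\begin{bmatrix} R^{\intercal} & PS\\ -PS & R^{\intercal}\end{bmatrix}
\]
is a skew $\pm1$ block matrix of order $2n$ with $|\det M|=|\det M_0|=\gamma_2(2n)$.

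Now I would invoke the equality clause of Theorem~\ref{thm-CohnTheorem}: the existence of a skew $\pm1$ block matrix of order $2n$ attaining $\gamma_2(2n)$ forces $2^{n}\gamma_4(n)^2=\gamma_2(2n)$. Feeding this equality into Theorem~\ref{thm-Cohn3} produces a Barba matrix of order $n=q^2+q+1$ over the fourth roots, which is exactly the claim.

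I do not expect a real obstacle here: the only substantive external input is that the KKS matrices genuinely attain the optimal $\pm1$ determinant at orders $2(q^2+q+1)$, and once that is granted the rest is bookkeeping --- that conjugation by $\diag(P,I_n)$ is determinant-preserving, and that the conjugate has the block-skew shape $\begin{bmatrix}A & B\\ -B & A\end{bmatrix}$ required by the definition, which is the content of Equation~\ref{eq-SkewEquation}. The one point to watch is purely logistical: one must check that the object produced is literally a \emph{skew $\pm1$ block matrix} (entries in $\mu_2$, the exact block form) so that the hypothesis of Theorem~\ref{thm-CohnTheorem} holds verbatim, and that its conclusion coincides with the hypothesis of Theorem~\ref{thm-Cohn3}.
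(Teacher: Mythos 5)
Your proposal is correct and follows the same route as the paper: conjugate the Koukouvinos--Kounias--Seberry matrix of order $2(q^2+q+1)$ by the permutation matrix $\diag(P,I_n)$ to obtain a skew $\pm1$ block matrix attaining $\gamma_2(2n)$, then apply the equality clause of Theorem~\ref{thm-CohnTheorem} followed by Theorem~\ref{thm-Cohn3}. Your write-up is in fact more explicit than the paper's about why the conjugate has the required block form $\left[\begin{smallmatrix}A & B\\ -B & A\end{smallmatrix}\right]$ and why $2n\equiv 2\pmod 4$.
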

\begin{proof} The matrix $P$ in Equation \ref{eq-SkewEquation} is a permutation matrix. Hence, for every prime power $q$ there is a $\pm 1$ skew block matrix $M$ of order $2(q^2+q+1)$ with $|\det(M)|=\gamma_2(2n)$. The result now follows from Theorem \ref{thm-CohnTheorem} and Theorem \ref{thm-Cohn3}.
\end{proof}

\begin{table}  
  \centering
  \caption{Maximal determinants and record determinants for matrices over the fourth roots. The columns labeled $R$ contain the ratio of the largest known determinant to the Hadamard bound for $n$ even, and to the Barba bound for $n$ odd. The columns labeled $|\det|^2/2^{n-1}$ contain the square of the absolute value of the largest known determinant divided by $2^{n-1}$. The symbol \red{??} is used to indicate that there is not yet a proof of maximality known for the corresponding matrix.}\label{tab-Maxdet4}
\begin{tabular}{|ccc||ccc||ccc||ccc|}
\hline
$n$ & $|\det|^2$ & R & $n$ & $|\det|^2/2^{n-1}$ & R & $n$ & $|\det|^2$ & R & $n$ & $|\det|^2/2^{n-1}$ & R\\
\hline
&&&1 & $1$ & $1$ & 2 & $2^2$ & $1$ & 3 & $ 5$ & $1$\\
4 & $4^4$ & $1$ & 5 & $2^4\times 3^2$ & $1$ & 6 & $6^6$ & $1$ & 7 & $3^6\times 13$ & $1$\\
8 & $8^8$ & $1$ & 9 & $4^{8}\times 17$ & $1$ & 10 & $10^{10}$ & $1$ & 11 & $2^2\times 5^{11} $\textcolor{red}{??} & \textcolor{red}{$0.97$}\\ 
12& $12^{12}$ & $1$ &13 & $ 6^{12}\times 5^2$ & $1$ & 14 & $14^{14}$ & $1$ & 
15 & $7^{14}\times 29$ & $1$\\
16 & $16^{16}$ & $1$ & 
17 &$13\times 137^4\times 1327^2$\textcolor{red}{??} &\textcolor{red}{$0.93$} &
18 & $18^{18}$ & $1$ &
 19 &
$3^{36}\times 37$ 
  & $1$ \\
20 & $20^{20}$ & $1$ & 21 & $10^{20}\times 41$ & $1$ & 22 & $22^{22}$ & $1$ & 23 &$3^2 \times 5\times 11^{22}$ &$1$\\
24 & $24^{24}$ & $1$ & 
$25$ & $2^{48}\times 3^{24}\times 7^2$ & $1$ &
26 & $26^{26}$ & $1$ & $27$ & $13^{26} \times 53$ & $1$\\
\hline
\end{tabular}
\end{table}

It is conjectured that there is a $\BH(n,4)$ matrix for all even $n$. From \cite{Djokovic-GoodMatrices}, \cite{Szollosi-Thesis}, and \cite{Ponasso-Thesis}, we believe that the smallest open case for the existence of a $\BH(n,4)$ matrix is $n=94$. The existence of a Barba matrix of order $n$ over the fourth roots implies that $2n-1$ is a sum of two squares (Theorem 2; \cite{Cohn-ComplexDOptimal}). The smallest open case for the maximal determinant problem over the fourth roots is $n=11$. The largest determinant we know is $2^{12}\cdot 5^{11}$; achieved by the matrix (written logarithmically):
\begin{equation}\label{eq-MaxDetCandidate11-4}
    W_{11}=W_{11}^{\intercal}:=\left[
      \begin{array}{*{11}{c}}
        3 & 0 & 1 & 0 & 1 & 0 & 1 & 3 & 2 & 2 & 3\\
        0 & 0 & 0 & 3 & 1 & 2 & 3 & 0 & 2 & 3 & 2\\
        1 & 0 & 0 & 1 & 2 & 2 & 2 & 2 & 1 & 2 & 3\\
        0 & 3 & 1 & 0 & 0 & 2 & 3 & 2 & 3 & 2 & 0\\
        1 & 1 & 2 & 0 & 0 & 2 & 2 & 3 & 2 & 1 & 2\\
        0 & 2 & 2 & 2 & 2 & 0 & 0 & 2 & 2 & 2 & 2\\
        1 & 3 & 2 & 3 & 2 & 0 & 3 & 0 & 1 & 1 & 0\\
        3 & 0 & 2 & 2 & 3 & 2 & 0 & 0 & 0 & 1 & 3\\
        2 & 2 & 1 & 3 & 2 & 2 & 1 & 0 & 0 & 2 & 1\\
        2 & 3 & 2 & 2 & 1 & 2 & 1 & 1 & 2 & 0 & 0\\
        3 & 2 & 3 & 0 & 2 & 2 & 0 & 3 & 1 & 0 & 0
      \end{array} 
    \right],
\end{equation}
satisfying:
\begin{equation}
  W_{11}W_{11}^* = W_{11}^*W_{11}=
  \left[
    \begin{array}{*{11}{c}}
      11 & 1 & 1 & 1 & 1 & 1 & - & - & - & - & -\\
      1 & 11 & 1 & 1 & 1 & - & - & - & - & - & -\\
      1 & 1 & 11 & 1 & 1 & - & - & - & - & - & -\\
      1 & 1 & 1 & 11 & 1 & - & - & - & - & - & -\\
      1 & 1 & 1 & 1 & 11 & - & - & - & - & - & -\\
      1 & - & - & - & - & 11 & 1 & - & - & - & -\\
      - & - & - & - & - & 1 & 11 & 1 & 1 & 1 & 1\\
      - & - & - & - & - & - & 1 & 11 & 1 & 1 & 1\\
      - & - & - & - & - & - & 1 & 1 & 11 & 1 & 1\\
      - & - & - & - & - & - & 1 & 1 & 1 & 11 & 1\\
      - & - & - & - & - & - & 1 & 1 & 1 & 1 & 11
    \end{array}
  \right].
\end{equation}

\section*{Acknowledgments}
The matrix $W_{11}$ in Equation \ref{eq-MaxDetCandidate11-4} was communicated by Adam Szolt Wagner. While carrying this work, the author was supported by the Japanese Society for the Promotion of Science (JSPS) as a JSPS post-doctoral fellow and through the JSPS KAKENHI Grant Number 24KF0176.
\printbibliography
\end{document}